\newcounter{Num}
\newtheorem{theo}{Theorem}[section]
\newtheorem{lemma}[theo]{Lemma}
\newtheorem{corollary}[theo]{Corollary}
\newtheorem{prop}[theo]{Proposition}
\newenvironment{proof}{ \textbf{Proof.}}{$\Box$}
\newcommand {\ZZ} {\mathbb {Z}}
\newcommand {\romanV} {${\overline{\underline{\mathrm{V}}}}$}
\newcommand {\CC} {\mathbb {C}}
\renewcommand{\ss}{\mathfrak{s}}
\newcommand{\kk}{\mathfrak{k}}
\newcommand{\mm}{\mathfrak{m}}
\newcommand{\hh}{\mathfrak{h}}
\newcommand{\pp}{\mathfrak{p}}
\renewcommand{\gg}{\mathfrak{g}}
\renewcommand{\ll}{\mathfrak{l}}
\newcommand{\bb}{\mathfrak{b}}
\renewcommand {\phi} {\varphi}
\newcommand{\refth}[1]{Theorem \ref{#1}}
\newcommand{\refle}[1]{Lemma \ref{#1}}
\newcommand{\refsec}[1]{Section \ref{#1}}
\newcommand{\refcor}[1]{Corollary \ref{#1}}
\newcommand{\refprop}[1]{Proposition \ref{#1}}
\newcommand{\refeq}[1]{(\ref{#1})}
\renewcommand{\Im}{\mathrm{Im}}
\renewcommand{\sp}{{sp}}
\newcommand{\gl}{{g\ell}}
\renewcommand{\sl}{{s\ell}}
\newcommand{\so}{{so}}
\newcommand{\ind}{\mathrm{ind}}
\newcommand{\qed}{$\Box$}
\newcommand{\Gr}{\mathrm{Gr}}
\newcommand{\eps}{\varepsilon}
\def\cplus{\hbox{$\subset${\raise0.3ex\hbox{\kern -0.55em ${\scriptscriptstyle +}$}}\ }}
\def\clplus{\hbox{$\subset${\raise0.3ex\hbox{\kern -0.55em ${\scriptscriptstyle +}$}}\ }}
\def\crplus{\hbox{$\supset${\raise1.05pt\hbox{\kern -0.72em ${\scriptscriptstyle +}$}}\ }}
\def\p{\bold p}
\def\ind{\text{\rm ind}}
\def\Im{\text{\rm Im}}
\def\p{\bold p}
\def\Hom{\text{\rm Hom}}
\def\m{\frak m}
\def\n{\frak n}
\def\p{\frak p}
\def\g{\frak g}
\def\b{\frak b}
\def\k{\frak k}
\def\t{\frak t}
\def\r{\frak r}
\def\rr{\frak r}
\title{ \begin{flushright}
\begin{small}
To Bertram Kostant\end{small}
\bigskip
\end{flushright}
%A construction of generalized Harish-Chandra modules for locally reductive Lie algebras
A CONSTRUCTION OF GENERALIZED HARISH-CHANDRA MODULES FOR LOCALLY REDUCTIVE LIE ALGEBRAS}
\author{ Ivan Penkov and Gregg Zuckerman}
\date{}
\begin{document}

\maketitle \abstract We study cohomological induction for a pair $(\frak g,\frak k)$, $\frak g$ being an infinite dimensional locally reductive Lie algebra and $\frak k \subset\frak g$ being of the form $\frak k_0 + C_\gg(\frak k_0)$, where $\frak k_0\subset\frak g$ is a finite dimensional reductive in $\frak g$ subalgebra and $C_{\gg} (\frak k_0)$ is the centralizer of $\frak k_0$ in $\frak g$.  We prove a general non-vanishing and $\frak k$-finiteness theorem for the output.  This yields in particular simple $(\frak g,\frak k)$-modules of finite type over $\frak k$ which are analogs of the fundamental series of generalized Harish-Chandra modules constructed in \cite{PZ1} and \cite{PZ2}.  We study explicit versions of the construction when $\frak g$ is a root-reductive or diagonal locally simple Lie algebra.
\endabstract
{ (2000 MSC): Primary 17B10, Secondary 17B55 }

\section{Introduction}

A \emph{locally reductive Lie algebra} is defined as a union $\cup_{n\in\ZZ_{> 0}}\frak g_n$ of nested finite dimensional reductive Lie algebras $\frak g_n \subset \frak g_{n+1}$ such that each $\frak g_n$ is reductive in $\frak g_{n+1}$.  The class of locally reductive Lie algebras is a very natural and interesting class of infinite dimensional Lie algebras, and no classification is known.  There are two (intersecting) subclasses of locally reductive Lie algebras which are relatively well-understood, see Subsection \ref{sec23}: the root-reductive Lie algebras, \cite{DP}, \cite{B}, and the locally simple diagonal Lie algebras, \cite{BZh}.  For instance, the Lie algebra $\gl (\infty)$ of infinite matrices with only finitely many non-zero entries is root-reductive, and the Lie algebra $\gl (2^\infty)$, defined as the union $\cup_{n\in\ZZ_{> 0}} \gl(2^n)$ via the injections
\begin{eqnarray*}
\gl (2^n)  &\subset & \gl (2^{n+1}) \\
A &\longmapsto &\begin{pmatrix} A & 0 \\ 0 & A\end{pmatrix},
\end{eqnarray*}
is diagonal. Both of the above classes of Lie algebras yield explicit examples of the general construction of this paper.

Representations of direct limit Lie groups have been studied for quite a considerable time now, \cite{Ha}, \cite{Ne}, \cite{O1}, \cite{O2}, \cite{NO}, \cite{W}, \cite{NRW}, however the theory of direct limit group representations has not been related in a systematic way to modules over the direct limit Lie algebra. In our opinion, this problem deserves further investigation.

In this paper we restrict ourselves to representations of locally reductive Lie algebras $\gg$ and we initiate the study of $(\frak g,\frak k)$-modules of finite type over $\frak k$. More specifically, we provide a construction of such modules when $\frak k$ is the form $\frak k_0 + C_{\frak g}(\frak k_0)$ for a finite-dimensional reductive in $\frak g$ subalgebra $\frak k_0$ ($C_{\frak g}(\cdot)$ denotes centralizer in $\frak g$).  If $\frak g$ is root-reductive, such subalgebras $\frak k$ may equal the fixed vectors of an involution on $\frak g$, hence $(\frak g,\frak k)$-modules of finite type generalize Harish-Chandra modules. Our main construction is a generalization of the fundamental series for subalgebras $\frak k\subset\frak g$ of the form $\frak k = \frak k_0 + C_{\frak g} (\frak k_0)$, cf. \cite{PZ2}.  We use the derived functor of the functor of locally finite $\frak k_0$-vectors. Its output is automatically endowed with a $(\frak g,\frak k)$-module structure.  Our finiteness result is based on a general finiteness theorem for cohomological induction which asserts $\frak k$-finiteness of the output provided the input is $\frak k\cap\frak m$-finite, $\frak m$ being the reductive part of the compatible parabolic subalgebra.  A main technical observation of this paper is that one can construct reasonably large classes of parabolically induced modules which are $\frak k\cap\frak m$-finite, both when $\frak g$ is root-reductive and when $\frak g$ is a diagonal. This is based on the stabilization of the branching multiplicities of certain tensor representations of classical Lie algebras of increasing rank.

Our main interest is in constructing simple $(\frak g,\frak k)$-modules $M$ which in addition to being of finite type are also strict, i.e. for which $\frak k$ coincides with the subalgebra of $\frak g$ consisting of all elements $g\in\frak g$ which act locally finitely on $M$ (the Fernando-Kac subalgebra of $M$). In particular, we provide sufficient conditions for strictness of the modules constructed.

The theory of $(\frak g, \frak k)$-modules for locally reductive Lie algebras $\frak g$ is still in its infancy and many questions remain off limits for this paper.  This concerns for instance the problem of unitarizability of the $(\frak g, \frak k)$-modules we construct.  Another very interesting problem is to describe the locally reductive subalgebras $\frak k\subset\frak g$ which admit strict simple $(\frak g, \frak k)$-modules of finite type.  Our paper deals with subalgebras of the form $\frak k_0 + C_{\frak g} (\frak k_0)$, and hence not with the case when $\frak k = \frak h$ is a splitting Cartan subalgebra of $s\ell (\infty), so(\infty)$ and $sp(\infty)$.  In fact, strict simple $(\frak g,\frak h)$-modules of finite type exist only for $s\ell (\infty)$ and $sp(\infty)$, and for $s\ell (\infty)$, and I. Dimitrov has been working on their classification, \cite{Di}.  Finally, we would like to point out that the idea of studying direct limits of cohomologically induced modules was first suggested by A. Habib in \cite{Ha} and that this idea has been an inspiration for us.

{\bf Acknowledgement.}  We thank G. Olshanskii for helpful comments and J. Willenbring for a detailed discussion of \refprop{prop11}. G. Zuckerman acknowledges the hospitality and support of the Jacobs University Bremen.

\section{Preliminaries}\label{sec2}

\subsection{Conventions}  All vector spaces and Lie algebras are defined over $\CC$.  If $p$ is a positive integer and $W$ is a vector space or a Lie algebra, we set $W^p := \underbrace{W\oplus\ldots\oplus W}_{ p~{\text{times}}}$. $T^\cdot(W)=\bigoplus_{k\geq 0}T^k(W)$ is the tensor algebra of $W$. The superscript $^*$ indicates dual space, and $\otimes=\otimes_\CC$. If $\frak g$ is a Lie algebra, $Z_{\frak g}$ stands for the center of $\frak g$, $C_{\frak g} (\alpha)$ stands for the centralizer in $\frak g$ of a subset $\alpha\subset\frak g$, $U(\frak g)$ stands for the enveloping algebra and $Z_{U(\frak g)}$ stands for the center of $U(\frak g)$.  The sign $\cplus$ denotes semidirect sum of Lie algebras.  A subalgebra $\frak k\subset\frak g$ is {\it reductive in} $\frak g$ if under the adjoint action of $\frak k, \frak g$ is a semisimple $\frak k$-module.  If $\frak l$ is any subalgebra of $\frak g$ and $M$ is an $\frak l$-module, we denote the induced module $U(\frak g)\otimes_{U(\ll)} M$ by $\text{ind}^\frak g_\frak l M$.  If $\frak l'$ is a finite dimensional Lie algebra, by $V_{\frak l'}(\lambda)$ we denote the simple finite dimensional $\frak l'$-module with highest weight $\lambda$.  When we write a vector space $W$ as $\cup_{n\in\ZZ_{> 0}} W_n$ we automatically assume that $W_n\subset W_{n+1}$ for $n\in\ZZ_{> 0}$.

\subsection{A stabilization result}
\begin{prop}\label{prop11}  Let $\frak s_n$ be a sequence of classical
finite dimensional simple Lie algebras of rank $n$ and of fixed type $A, B, C$ or $D$.  Denote by $V_n$ the natural $\frak s_n$-module. Then, for any fixed $a,b,c,k\in\ZZ_{> 0}$ the length of the $\frak s_n$-module $T^k(V^a_n \oplus (V^*_n)^b\oplus \CC^c)$ stabilizes when $n\to\infty$ (here $\CC$ stands for the trivial $1$-dimensional $\frak s_n$-module).
\end{prop}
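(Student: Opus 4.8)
The plan is to reduce the assertion to a stabilization statement about mixed tensor powers of the natural module, and then to invoke the rank-independence of tensor-product (equivalently, diagonal branching) multiplicities for the classical Lie algebras in the stable range. First I would distribute the $k$-th tensor power over the direct sum: setting $W:=V_n^a\oplus(V_n^*)^b\oplus\CC^c$, the module $T^k(W)=W^{\otimes k}$ is a direct sum of $(a+b+c)^k$ summands, each isomorphic to $V_n^{\otimes p}\otimes(V_n^*)^{\otimes q}\otimes\CC^{\otimes r}\cong V_n^{\otimes p}\otimes(V_n^*)^{\otimes q}$ for some $p+q+r=k$, and the number of summands producing a given pair $(p,q)$ does not depend on $n$. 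Since $\ss_n$ is reductive in itself, every such module is semisimple, and length is additive over finite direct sums; hence it suffices to prove that, for each fixed pair $(p,q)$ with $p+q\le k$, the length of $V_n^{\otimes p}\otimes(V_n^*)^{\otimes q}$ is eventually constant in $n$.

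For types $B$, $C$, $D$ the natural module is self-dual, $V_n^*\cong V_n$, so one must show that the length of $V_n^{\otimes m}$, with $m:=p+q$, stabilizes. The decomposition of $V_n^{\otimes m}$ into $\ss_n$-irreducibles is governed by Brauer (Schur--Weyl) duality: $\mathrm{End}_{\ss_n}(V_n^{\otimes m})$ is a quotient of the Brauer algebra $B_m(\dim V_n)$, and once $\dim V_n$ is large relative to $m$ this quotient map is an isomorphism and $B_m(\dim V_n)$ is semisimple of an isomorphism type independent of $\dim V_n$. By the double centralizer theorem $V_n^{\otimes m}\cong\bigoplus_\mu V_{\ss_n}(\mu)\otimes S_\mu$, where the $S_\mu$ are the pairwise non-isomorphic irreducibles of that centralizer algebra, so both the indexing set of the $\mu$ and the multiplicities $\dim S_\mu$ are the same for all large $n$; moreover in this range the relevant $\mu$ are partitions with $|\mu|\le m$, hence with at most $m$ rows, and such partitions label pairwise non-isomorphic $\ss_n$-modules, so no two distinct Brauer labels collapse. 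Therefore the length of $V_n^{\otimes m}$, being $\sum_\mu\dim S_\mu$, is eventually constant. Equivalently, and without invoking Brauer algebras, one may iterate the stable Pieri rule $V_{\ss_n}(\mu)\otimes V_n=\bigoplus_{\mu'}V_{\ss_n}(\mu')$ --- whose decomposition has $n$-independent content as soon as $n$ is large compared with $|\mu|$ --- a total of $m$ times, starting from $V_n$.

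For type $A$ I would work with $\gl(n+1)\supset\sl(n+1)$, noting that $V_n^{\otimes p}\otimes(V_n^*)^{\otimes q}$ is the restriction of the corresponding $\gl(n+1)$-module. Mixed Schur--Weyl duality --- via the walled Brauer algebra $B_{p,q}(n+1)$, which is semisimple of $n$-independent isomorphism type for $n+1\ge p+q$ --- shows that the $\gl(n+1)$-length of this module stabilizes; its $\gl(n+1)$-constituents are the rational irreducibles of highest weight $(\lambda_1,\dots,\lambda_s,0,\dots,0,-\mu_t,\dots,-\mu_1)$ with $|\lambda|\le p$, $|\mu|\le q$ and $s+t\le n+1$. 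When $n+1>p+q$ each such weight carries a genuine block of zero entries, so subtracting a nonzero multiple of $(1,\dots,1)$ would produce a part of $\lambda$ or of $\mu$ of size at least $n+1-p-q$, contradicting the bounds $|\lambda|\le p$, $|\mu|\le q$ once $n$ is large; hence distinct $\gl(n+1)$-constituents restrict to distinct $\sl(n+1)$-irreducibles, and for all large $n$ the $\sl(n+1)$-length coincides with the $\gl(n+1)$-length and therefore stabilizes.

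The combinatorial reduction of the first paragraph is routine; the substantive point, and the step I expect to be the main obstacle, is the per-$(p,q)$ stabilization. Its difficulty is that one must stabilize not only the \emph{set} of highest weights occurring in $V_n^{\otimes p}\otimes(V_n^*)^{\otimes q}$ but also their \emph{multiplicities}, and must simultaneously rule out accidental isomorphisms among the associated simple modules as the rank grows. Both are supplied by the stable behaviour of Brauer and walled-Brauer duality --- equivalently, by the independence of the Littlewood--Richardson and classical Pieri coefficients of the rank in the stable range --- which is precisely the ``stabilization of branching multiplicities'' alluded to in the introduction.
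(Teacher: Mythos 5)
Your argument is correct and reaches the same conclusion by a genuinely different route. The paper proceeds by induction on $k$: it first shows that the simple constituents of $T^k(V_n^a\oplus(V_n^*)^b\oplus\CC^c)$ range over an $n$-independent finite set of (suitably padded) highest weights, and then stabilizes the multiplicities using the adjunction $\Hom(V(\lambda),X\otimes V_n)\cong\Hom(V(\lambda)\otimes V_n^*,X)$ together with the stable Pieri-type formulas 1.2.1--1.2.3 of Howe--Tan--Willenbring; your parenthetical remark about iterating the stable Pieri rule is essentially that argument. Your main route instead reduces to a single mixed tensor power $V_n^{\otimes p}\otimes(V_n^*)^{\otimes q}$ and reads off its length from the (walled) Brauer algebra via the double centralizer theorem in the stable range; this is more structural, identifies the stable length explicitly as $\sum_\mu\dim S_\mu$, and your explicit check that distinct $\gl(n+1)$-constituents stay distinct upon restriction to $\sl(n+1)$ is a point the paper handles only implicitly through its convention of padding weights with zeros. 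The cost is that you must invoke the stable-range facts about Brauer and walled Brauer algebras (injectivity of the map to the centralizer, semisimplicity, rank-independence of the simple modules), including the minor corrections that the symplectic parameter is $-\dim V_n$ rather than $\dim V_n$ and that in type $D$ one must note that partitions with fewer than $n$ rows do not split under restriction from $O(2n)$ to $\so(2n)$ --- all standard, and automatic once $n$ is large relative to $k$, so there is no gap.
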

\begin{proof}
This result is a relatively straightforward corollary of the results in \cite{HTW}, and we describe the argument only very briefly. Assume that $\ss_n=\sl(n+1)$, let $\hh_n$ be the diagonal subalgebra, $\bb_n$ be the upper-triangular subalgebra, and $\eps_1-\eps_2,\dots, \eps_n-\eps_{n+1}$ be the standard basis in $\hh_n^*$. We will view any $\bb_n$-dominant weight $\lambda=\sum_{i=1}^{n+1}\lambda_i\eps_i$ of $\ss_n$, $\lambda_1\geq\dots\geq\lambda_n$, $\lambda_i\in\ZZ$ as a $\bb_{n+k}$-dominant weight of $\ss_{n+k}$ by inserting $k$ zeroes in the non-increasing sequence $\lambda_1\geq\dots\geq\lambda_{n+1}$ so that the remaining sequence remains non-increasing. Therefore, for a fixed $n_0$ and a $\bb_{n_0}$-dominant weight $\lambda$ as above, the $\ss_n$-module $V_{\ss_n}(\lambda)$ is well defined for $n\geq n_0$. The first fact needed in the proof of \refprop{prop11} is that for fixed $a,b,c,k$, there is an integer $n_0$ such that all simple constituents of $X_n:=T^k(V_n^a\oplus(V_n^*)^b\oplus\CC^c)$ are of the form $V_{\ss_n}(\lambda)$ for $n\geq n_0$, where $\lambda$ runs over a finite set of $\bb_{n_0}$-dominant weights of $\ss_{n_0}$. This is proved by a straightforward induction on $k$.

All that remains to show now is that for each $V_{\ss_n}(\lambda)$ with $\lambda$ as above, $\dim \Hom_{\ss_n}(V_{\ss_n}(\lambda), X_n)$ stabilizes when $n\to\infty$. This can also be done by induction on $k$. The case $k=1$ is obvious, so we can assume that the statement is true for $1,2,\dots,k$. Then, in order to prove the Proposition for $k+1$, it suffices to show that $\dim \Hom_{\ss_n}(V_{\ss_n}(\lambda), X_n\otimes V_n)$ and $\dim \Hom_{\ss_n}(V_{\ss_n}(\lambda), X_n\otimes V_n^*)$ stabilize for $n\to \infty$. Note that
\[
\dim \Hom_{\ss_n}(V_{\ss_n}(\lambda), X_n\otimes V_n)= \dim \Hom_{\ss_n}(V_{\ss_n}(\lambda)\otimes V_n^*, X_n),
\]
\[
\dim \Hom_{\ss_n}(V_{\ss_n}(\lambda), X_n\otimes V_n^*)= \dim \Hom_{\ss_n}(V_{\ss_n}(\lambda)\otimes V_n, X_n).
\]
The statement follows now from the induction assumption and from the key formula 1.2.1 in \cite{HTW} which implies that
\[
\Hom_{\ss_n}(V_{\ss_n}(\lambda'), V_{\ss_n}(\lambda)\otimes V_n^*)\neq 0
\]
for an independent on $n$ finite set of weights $\lambda'$ only (respectively,
\[
\Hom_{\ss_n}(V_{\ss_n}(\lambda''), V_{\ss_n}(\lambda)\otimes V_n)\neq 0
\]
for an independent on $n$ finite set $\lambda''$ only), and that $\dim \Hom_{\ss_n}(V_{\ss_n}(\lambda'), V_{\ss_n}(\lambda)\otimes V_n^*)$ (resp., $\dim\Hom_{\ss_n}(V_{\ss_n}(\lambda''), V_{\ss_n}(\lambda)\otimes V_n)$) stabilizes for $n\to\infty$. The reader will easily fill in the details.

For $\ss_n$ of types $B,C,D$ the argument is essentially the same and uses formulas 1.2.2 and 1.2.3 in \cite{HTW}.
\end{proof}

\subsection{Locally reductive Lie algebras}\label{sec23}  We defined locally reductive Lie algebras in the Introduction.  In the rest of this paper, when writing $\frak g = \cup_{n\in\ZZ_{> 0}} \frak g_n$ for a locally reductive Lie algebra $\frak g$, we will always assume that the $\frak g_n$'s form a chain
\begin{equation}\label{eq1}
\frak g_1\subset\frak g_2\subset\ldots\subset \frak g_n \subset\frak g_{n+1} \subset\ldots
\end{equation}
of finite dimensional reductive Lie algebras such that each $\frak g_n$ is reductive in $\frak g_{n+1}$.

An important but quite restrictive class of locally reductive Lie algebras are the {\it root-reductive} Lie algebras.  They have the form $\cup_{n\in\ZZ_{> 0}}\frak g_n$, where the chain (1) satisfies the requirement that each inclusion $\frak g_n\subset\frak g_{n+1}$ is a root homomorphism, i.e. maps a Cartan subalgebra of $\frak g_n$ into a Cartan subalgebra of $\frak g_{n+1}$ and any root space of $\frak g_n$ into a root space of $\frak g_{n+1}$. A most natural example of a root-reductive Lie algebra is the Lie algebra $\gl(\infty)$, defined via the chain $\gl(i)\subset\gl(i+1)$ of upper left-hand corner embeddings.

A Lie algebra $\frak s$ is {\it locally simple} if $\frak s = \cup_{n\in\ZZ_{>0}}\frak s_n$ where $\frak s_n$ are simple Lie algebras (in this case $\frak s_n$ is automatically reductive in $\frak s_{n+1}$), in particular a locally simple Lie algebra is locally reductive. Up to isomorphism, there are three simple infinite dimensional locally simple root-reductive Lie algebras: $s\ell (\infty), so(\infty)$ and $sp(\infty)$. They are defined by obvious chains of inclusions which are root-homomorphisms (in the case of $so(\infty)$ there are two natural choices: $\ldots\subset so(2i)\subset so(2i+2)\subset\ldots$ and $\ldots\subset so(2i+1)\subset so(2i+3)\subset\ldots$, however these yield isomorphic locally simple Lie algebras).  The following structure theorem has been proved in \cite{DP}.

\begin{theo}
Let $\frak g$ be a root-reductive Lie algebra.

\item[(a)]  The exact sequence
$$
0\to [\frak g,\frak g] \to \frak g \to \frak a := \frak g /[\frak g,\frak g] \to 0
$$
splits, hence $\frak g$ is isomorphic to the semidirect sum $[\frak g,\frak g]\cplus\frak a$ ($\frak a$ being an abelian Lie algebra).

\item[(b)] $[\frak g,\frak g]$ is isomorphic to a direct sum of at most countably many copies of \newline $s\ell (\infty), so(\infty), sp(\infty)$, as well as of simple finite dimensional Lie algebras.
\end{theo}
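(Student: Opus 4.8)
The plan is to reduce the statement to the structure of a single root embedding of finite dimensional reductive Lie algebras, and then to analyse the resulting directed systems of simple ideals. Fix a chain $\gg_1\subset\gg_2\subset\cdots$ of finite dimensional reductive Lie algebras with $\gg=\cup_n\gg_n$ in which every inclusion is a root homomorphism, and choose compatible Cartan subalgebras $\hh_n=\gg_n\cap\hh_{n+1}$. Write $\gg_n=\ss_n\oplus Z_{\gg_n}$ with $\ss_n:=[\gg_n,\gg_n]$ semisimple; then $[\gg,\gg]=\cup_n\ss_n$, and $[\gg,\gg]$ has trivial centre, since an element of its centre lies in some $\ss_N$ and centralizes $\ss_N$, hence lies in $Z(\ss_N)=0$. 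The key lemma I would establish is that a root homomorphism $\phi$ from a finite dimensional \emph{simple} Lie algebra $\ss$ into a reductive Lie algebra $\gg'$ has image contained in a single simple ideal $\gg''$ of $\gg'$, and that for any Cartan subalgebra $\hh'\supseteq\phi(\hh_\ss)$ of $\gg'$ the subalgebra $\phi(\ss)+(\hh'\cap\gg'')$ is a root subalgebra of $\gg''$ whose root system is an irreducible closed subsystem of that of $\gg''$ isomorphic to the root system of $\ss$. The first assertion follows because $\phi(\ss)$ is perfect, hence lands in $[\gg',\gg']$; each projection to a simple ideal is then $0$ or injective; and since $\phi$ sends every root space of $\ss$ into a root space of $\gg'$, which sits in a single simple ideal, the irreducibility of the root system of $\ss$ --- which lets one propagate the simple-ideal label along brackets of root spaces --- forces exactly one projection to be nonzero.

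For part (b) I would then invoke the classification of closed subsystems of the root systems of simple Lie algebras (Borel--de Siebenthal / Dynkin): combined with the key lemma it shows that, up to conjugacy, a root embedding of one finite dimensional simple Lie algebra into another is either a standard embedding $\sl(m)\hookrightarrow\sl(n)$, $\so(m)\hookrightarrow\so(n)$ or $\sp(2m)\hookrightarrow\sp(2n)$ within a single classical family, or one of a handful of ``type-changing'' embeddings ($\sl\hookrightarrow\so$, $\sl\hookrightarrow\sp$, $\so\hookrightarrow\sp$, low-rank coincidences, exceptional ones). Now I decompose each $\ss_n$ into its simple ideals; by the key lemma each simple ideal of $\ss_n$ is carried into a unique simple ideal of $\ss_{n+1}$, so the simple ideals form a directed system whose connected components --- at most countably many, since $\dim\gg$ is countable --- yield ideals $\frak b_i$ of $[\gg,\gg]$ with $[\gg,\gg]=\bigoplus_i\frak b_i$ (distinct $\frak b_i$ commute and each is perfect). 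For $n$ large each component contributes exactly one simple ideal per level, so $\frak b_i$ is a direct limit of finite dimensional simple Lie algebras along root embeddings. If the ranks stay bounded, $\frak b_i$ is finite dimensional simple. Otherwise, since a simply laced subsystem cannot contain a non-simply laced one and the only closed subsystems of a type-$A$ system are again of type $A$, the type-changing steps occur only finitely often, after which the chain grows by the standard embeddings within one classical family; hence $\frak b_i\cong\sl(\infty)$, $\so(\infty)$ or $\sp(\infty)$. This is (b).

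For part (a) it suffices to produce an abelian subalgebra $\aa'\subset\gg$ that is a vector space complement to $[\gg,\gg]$: any subalgebra complement is automatically abelian, since $[\aa',\aa']\subseteq\aa'\cap[\gg,\gg]=0$, and $[\gg,\gg]$ being an ideal one then has $\gg=[\gg,\gg]\cplus\aa'$ with $\aa\cong\aa'$ abelian. If $\gg/[\gg,\gg]$ is finite dimensional, choose $N$ with $\gg_N\to\gg/[\gg,\gg]$ surjective and let $\aa'$ be a complement of $Z_{\gg_N}\cap[\gg,\gg]$ inside the abelian algebra $Z_{\gg_N}$; this works since $Z_{\gg_N}\to\gg/[\gg,\gg]$ is onto with kernel $Z_{\gg_N}\cap[\gg,\gg]$. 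In general I would build $\aa'=\cup_n\aa'_n$ inductively: given a finite dimensional abelian $\aa'_{n-1}\subset\gg$ mapping isomorphically onto the image of $\gg_{n-1}$ in $\gg/[\gg,\gg]$, lift the finitely many new generators coming from $\gg_n$ and correct each lift by an element of $[\gg,\gg]$ so that it commutes with $\aa'_{n-1}$ and with the other lifts; the correction equations are solvable because $\aa'_{n-1}$, being assembled from centres of reductive subalgebras, acts semisimply and locally finitely on $[\gg,\gg]$, so one solves them weight space by weight space, the zero weight part of the discrepancy vanishing automatically.

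I expect the main obstacle to lie in part (b): extracting from the classification of root subalgebras precisely which infinite direct limits can arise and showing that they all collapse to $\sl(\infty)$, $\so(\infty)$ and $\sp(\infty)$ --- in particular that the isomorphism type necessarily stabilizes along every chain of unbounded rank. The inductive gluing in part (a) when $\gg/[\gg,\gg]$ is infinite dimensional --- producing a genuinely compatible system of abelian complements rather than just a complement at each finite level --- is the secondary technical point.
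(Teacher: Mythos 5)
The paper does not actually prove this theorem --- it is quoted from \cite{DP} --- so the relevant comparison is with the argument there, which is indeed built on the same skeleton as yours: compatible splitting Cartan subalgebras, the observation that a root embedding of a simple algebra lands in a single simple ideal with closed image root system, and an analysis of the resulting directed system. Your key lemma is correct (note that closedness of the image subsystem uses that root spaces of a reductive Lie algebra are one-dimensional, so $\phi(\ss^\alpha)$ is the \emph{whole} root space of $\gg''$). But two genuine gaps remain. First, the heart of (b) is exactly the step you defer: showing that along each irreducible component of unbounded rank the type eventually stabilizes and the embeddings are eventually standard, so that the limit is $\sl(\infty)$, $\so(\infty)$ or $\sp(\infty)$ and nothing else. ``Type $A$ only contains type $A$'' and ``simply laced only contains simply laced'' rule out some alternations but not, for instance, chains mixing $B$, $C$ and $D$ subsystems, nor non-standard inclusions within one family; this case analysis is the substance of the theorem and is only asserted. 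A smaller but real issue in the same part: a connected component of your directed system need not contribute ``exactly one simple ideal per level for $n$ large'' --- ideals can keep merging while new ones keep entering the component --- so realizing each $\frak b_i$ as a direct limit of \emph{simple} algebras requires passing to the equivalence ``images eventually coincide'' rather than to graph components.

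Second, in (a) your inductive correction argument has an unjustified step: after replacing a lift $x$ by $x+c$ with $c\in[\gg,\gg]$, there is no reason the corrected elements still act semisimply on $[\gg,\gg]$, so the weight-space decomposition you solve against is not available at the next stage. The standard (and much shorter) route avoids this entirely: with $\hh=\cup_n\hh_n$ a compatible splitting Cartan subalgebra one has $\gg=\hh\oplus\bigoplus_{\alpha\neq 0}\gg^\alpha$ and
$$
[\gg,\gg]=\Bigl(\textstyle\sum_\alpha[\gg^\alpha,\gg^{-\alpha}]\Bigr)\oplus\bigoplus_{\alpha\neq 0}\gg^\alpha,
$$
so $\hh\cap[\gg,\gg]=\sum_\alpha[\gg^\alpha,\gg^{-\alpha}]$ and \emph{any} linear complement $\aa'$ of $\hh\cap[\gg,\gg]$ inside the abelian algebra $\hh$ is automatically an abelian subalgebra complementing $[\gg,\gg]$; no correction procedure is needed. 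If you keep your inductive scheme, you must either constrain all lifts and corrections to lie in $\hh$ (which collapses to the argument above) or prove semisimplicity of the corrected elements separately.
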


A more general and very interesting class of locally reductive Lie algebras which are not necessarily root-reductive are the {\it diagonal} Lie algebras.  By definition, a chain (1) of classical finite dimensional Lie algebras is {\it diagonal}, if for any $n$, the natural representation of $\frak g_{n+1}$ is isomorphic to a direct sum of copies of the natural representation of $\frak g_n$, of its dual and of the trivial representation.  Locally simple diagonal Lie algebras have been classified up to isomorphism in \cite{BZh}.  In the present paper, we will restrict ourselves to the simplest subclass of diagonal Lie algebras $\gl (p\Theta)$ defined below, however our results should extend without significant difficulty to general diagonal Lie algebras.  Let $\theta_1,\theta_2,\ldots$ be an infinite sequence of integers greater than $1$.  We denote by $\Theta$ the formal product $\theta_1\theta_2\ldots$ and, for each $p\in\ZZ_{\geq 1}$, we define the Lie algebra $\gl (p\Theta)$ (for $p=1$ we write simply $\gl (\Theta))$ as the union of the following diagonal chain
$$
\gl (p)\subset \gl (p\theta_1)\subset \gl (p\theta_1\theta_2)\subset\ldots
$$
where, for $n\in\ZZ_{\geq 0}$, $\gl (p\theta_1\theta_2\ldots \theta_{n-1})$ is embedded into $\gl (p\theta_1\ldots\theta_{n})$ by repeating a matrix $A\in \gl (p \theta_1\ldots\theta_{n-1}) \,\,  \theta_{n}$ times along the main diagonal in $\gl (p \theta_1\ldots \theta_{n})$.  The locally simple diagonal Lie algebra $s\ell (p\Theta )$ is defined in the same way with $\gl (p \theta_1\ldots\theta_n)$ replaced by $s\ell (p \theta_1\ldots\theta_n)$. The reader will check immediately that $\gl (p\Theta ) = Z_{\gl(p\Theta)}\oplus s\ell (p\Theta)$, the center $Z_{\gl (p\Theta)}$ being $1$-dimensional.  The Lie algebra $\gl (2^\infty)$ (see the Introduction) is the simplest example of a Lie algebra of the form $g\ell (p\Theta)$ (here $p=2=\theta_n, n\in\ZZ_{> 0}$).

\subsection{$(\frak g,\frak k)$-modules}  If $\frak g$ is a locally reductive Lie algebra and $M$ is a $\frak g$-module, the {\it Fernando-Kac subalgebra} $\frak g [M]\subset\frak g$ consists of all elements $g\in\frak g$ which act locally finitely on $M$, see \cite{F}, \cite{DMP} and the references therein.

If $\frak g$ is locally reductive and $\frak k\subset\frak g$ is a Lie subalgebra, we call a $\frak g$-module $M$ a $(\frak g,\frak k)$-{\it module} if $\frak k\subset\frak g [M]$.  In other words, $M$ is a $(\frak g,\frak k)$-module if for any $m\in M$ and any $n\in\ZZ_{>0}$ the $\frak k_n$-submodule of $M$ generated by $m$ is finite-dimensional.  We call a $(\frak g,\frak k)$-module $M$ {\it strict} if $\frak k = \frak g [M]$. Sometimes we use the term $\frak k$-{\it integrable} $\frak g$-{\it module} as an equivalent to $(\frak g,\frak k)$-module.

Furthermore, we define a $(\frak g,\frak k)$-module $M$ to be of {\it finite type} if the following two conditions hold:

\item[-] every finitely generated $\frak k$-submodule $M'$ of $M$ has finite length as a $\frak k$-module;
\item[-] for every fixed simple integrable $\frak k$-module $L$, the multiplicity of $L$ as a subquotient of $M'$ is bounded when $M'$ runs over all finitely generated $\frak k$-submodules of $M$. If a $(\frak g,\frak k)$-module $M$ is not of {\it finite type}, we say that $M$ is of infinite type.  A {\it generalized Harish-Chandra module} is a finitely generated $\frak g$-module $M$ such that $M$ is a $(\frak g,\frak k)$-module of finite type for some Lie subalgebra $\frak k\subset\frak g$.

Note that given any integrable $\frak k$-module $E$, the induced $\frak g$-module $\text{ind}^{\frak g}_{\frak k} E$ is a strict $(\frak g, \frak k)$-module, however in general (and more specifically, for $\kk=\kk_0+C_\gg(\kk_0)$ as in \refsec{sec3} below) $\text{ind}^{\frak g}_{\frak k} E$ has infinite type\footnote{An interesting case when $\ind_\kk^\gg E$ has finite $\kk$-type is as follows. Using results of \cite{NP} it is easy to construct an embedding $\gl(\infty)\simeq\kk\subset\gg\simeq\gl(\infty)$, so that $\gg/\kk$ is isomorphic as a $\kk$-module to natural $\kk$-module $V$ (i.e. to the union of natural $\kk_n$-modules $V_n$, where $\kk_n\simeq\gl(n)$). Then  $\ind_\kk^\gg\CC\simeq S^\cdot(\gg/\kk)\simeq S^\cdot(V)$, and it is easy to see that the symmetric algebra is a multiplicity free $\kk$-module, i.e., in particular, $\ind_\kk^\gg$ has finite type as a $(\gg,\kk)$-module.}.  Therefore for the construction of strict simple $(\frak g,\frak k)$-modules of finite type, one needs more sophisticated techniques than ordinary induction. As we show below, cohomological induction is an ideal tool for this purpose.

Here are two examples illustrating the notions of a $(\frak g,\frak k)$-module of finite and of infinite type in the extreme case of an integrable $\frak g$-module.

\begin{prop}\label{prop13}  Let $\frak s = \cup_{n\in\ZZ_{>0}}\frak
s_n$ be any infinite dimensional locally simple Lie algebra and $\frak k_0 \subset\frak s_1$ be a finite dimensional subalgebra of $\frak s_1$.  Let $M$ be any non-trivial integrable $\frak s$-module.  Then $M$ is an $(\frak s,\frak k_0)$-module of infinite type.
\end{prop}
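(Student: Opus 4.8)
The plan is to strip the statement down to a multiplicity assertion for the finite-dimensional simple algebra $\mathfrak{s}_1$ and then to derive that assertion from a branching fact about a single proper inclusion of finite-dimensional simple Lie algebras.

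First, the formal reductions. Since $\mathfrak{k}_0\subseteq\mathfrak{s}_{n_0}$ for some $n_0$, passing to the cofinal chain $\mathfrak{s}_{n_0}\subset\mathfrak{s}_{n_0+1}\subset\cdots$ we may assume $\mathfrak{k}_0\subseteq\mathfrak{s}_1$. Integrability of $M$ over $\mathfrak{s}$ then forces integrability over $\mathfrak{k}_0$, so $M$ is an $(\mathfrak{s},\mathfrak{k}_0)$-module; and since $\mathfrak{k}_0$ is finite-dimensional, every finitely generated $\mathfrak{k}_0$-submodule of $M$ is finite-dimensional, hence of finite length, so the first condition in the definition of finite type holds automatically and ``$M$ is of infinite type over $\mathfrak{k}_0$'' means precisely that some simple finite-dimensional $\mathfrak{k}_0$-module occurs in $M$ with unbounded multiplicity. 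Also, $\mathfrak{s}$ is a simple infinite-dimensional Lie algebra (a union of finite-dimensional simple subalgebras along inclusions is simple), so the nontrivial module $M$ is infinite-dimensional. Finally, it is enough to produce a single simple $\mathfrak{s}_1$-module $V$ occurring with infinite multiplicity in $M|_{\mathfrak{s}_1}$: for then $V|_{\mathfrak{k}_0}$ is a nonzero finite-dimensional $\mathfrak{k}_0$-module, and if $L$ is any simple constituent of it then $L$ occurs in the $\mathfrak{k}_0$-submodule $V^{\oplus N}\subseteq M$ with multiplicity $\geq N$ for every $N$, whence $M$ is of infinite type.

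Now assume for contradiction that every simple $\mathfrak{s}_1$-module has finite multiplicity in $M|_{\mathfrak{s}_1}$. Restricting a simple $\mathfrak{s}_m$-module to $\mathfrak{s}_1\subseteq\mathfrak{s}_m$ yields a nonzero finite-dimensional $\mathfrak{s}_1$-module, which has a simple $\mathfrak{s}_1$-constituent; hence every simple $\mathfrak{s}_m$-module also has finite multiplicity in $M|_{\mathfrak{s}_m}$, for every $m$. Fix $m$ with $\mathfrak{s}_1\subsetneq\mathfrak{s}_m$; such $m$ exists because $\mathfrak{s}$ is infinite-dimensional. Since $M$ is infinite-dimensional and, being integrable, is semisimple as an $\mathfrak{s}_m$-module, and since all its $\mathfrak{s}_m$-multiplicities are finite, $M|_{\mathfrak{s}_m}$ has infinitely many pairwise non-isomorphic simple constituents $W_1,W_2,\dots$, and $M|_{\mathfrak{s}_m}$ contains $\bigoplus_i W_i$, so $M|_{\mathfrak{s}_1}$ contains $\bigoplus_i W_i|_{\mathfrak{s}_1}$. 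At this point I would invoke the following branching fact: \emph{for a proper inclusion $\mathfrak{g}'\subsetneq\mathfrak{g}$ of finite-dimensional simple Lie algebras there is a finite set $\Sigma$ of isomorphism classes of simple $\mathfrak{g}'$-modules such that the restriction to $\mathfrak{g}'$ of every simple $\mathfrak{g}$-module contains a member of $\Sigma$.} Applying it to $(\mathfrak{g}',\mathfrak{g})=(\mathfrak{s}_1,\mathfrak{s}_m)$, every $W_i|_{\mathfrak{s}_1}$ meets the fixed finite set $\Sigma$, so by the pigeonhole principle some $V_0\in\Sigma$ occurs in $W_i|_{\mathfrak{s}_1}$ for infinitely many $i$; hence $V_0$ occurs with infinite multiplicity in $M|_{\mathfrak{s}_1}$, contradicting the assumption.

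The branching fact is the one substantial point, and the expected main obstacle. I would prove it by writing each simple $\mathfrak{g}$-module $W$ as a constituent of some tensor product $W_{\mathfrak{g}}^{\otimes p}\otimes(W_{\mathfrak{g}}^{*})^{\otimes q}$ of copies of the natural module $W_{\mathfrak{g}}$ of $\mathfrak{g}$ and its dual, so that $W|_{\mathfrak{g}'}$ is a constituent of $U^{\otimes p}\otimes(U^{*})^{\otimes q}$, where $U:=W_{\mathfrak{g}}|_{\mathfrak{g}'}$ is a fixed nontrivial, hence faithful, $\mathfrak{g}'$-module. One is then reduced to exhibiting a bound $B$, depending only on the inclusion $\mathfrak{g}'\subseteq\mathfrak{g}$, such that the restriction to $\mathfrak{g}'$ of every simple $\mathfrak{g}$-module has a simple constituent of dimension $\leq B$; one takes $\Sigma$ to be the set of all simple $\mathfrak{g}'$-modules of dimension $\leq B$. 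To produce such a $B$ one uses that $U\otimes U^{*}\cong\operatorname{End}(U)$ contains both the trivial and the adjoint $\mathfrak{g}'$-module, so that these small modules recur with controlled behaviour in all higher tensor powers, together with a lowest-$\mathfrak{g}'$-weight analysis inside the given constituent --- equivalently, with an analysis of $\dim\operatorname{Hom}_{\mathfrak{g}'}(V,U^{\otimes p}\otimes(U^{*})^{\otimes q})$ for the finitely many small $V$, in the spirit of the branching computations of \cite{HTW} and \refprop{prop11}. When the inclusion $\mathfrak{s}_1\subset\mathfrak{s}_m$ happens to be diagonal the bound can simply be read off from the stabilization of lengths in \refprop{prop11} applied to the natural modules. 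Making this uniform in $p$ and $q$ is the hard part of the argument; everything else is formal.
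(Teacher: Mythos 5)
Your overall strategy is the same as the paper's: reduce ``infinite type'' to the unbounded multiplicity of a single simple $\frak s_1$-constituent, note that $\dim M=\infty$ and that $M$ restricted to a fixed $\frak s_m$ is semisimple with infinitely many pairwise non-isomorphic simple constituents, and conclude by pigeonhole once one knows that the restrictions to $\frak s_1$ of all simple $\frak s_m$-modules meet one fixed finite set of simple $\frak s_1$-modules. The paper obtains this last fact by citing Theorem 4.0.11 of Willenbring--Zuckerman, which holds under the hypothesis that $\dim\frak s_m-\dim\frak s_1$ is \emph{sufficiently large}.

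The gap is that your ``branching fact'' is false as stated: it is not true that for every proper inclusion $\frak g'\subsetneq\frak g$ of finite dimensional simple Lie algebras there is a finite set $\Sigma$ of simple $\frak g'$-modules meeting the restriction of every simple $\frak g$-module. Take the standard corank-one inclusion $\frak g'=\so(6)\subset\so(7)=\frak g$ (both simple) and $\lambda=(k,k,k)$: the classical interlacing branching rule $\lambda_1\geq\mu_1\geq\lambda_2\geq\mu_2\geq\lambda_3\geq|\mu_3|$ forces every constituent $V_{\frak g'}(\mu)$ of $V_{\frak g}(\lambda)|_{\frak g'}$ to have $\mu_1=\mu_2=k$, so all constituents have dimension tending to infinity with $k$ and no finite $\Sigma$ can work; the same happens for $\sl(n)\subset\sl(n+1)$ with $\lambda=(k,\dots,k,0)$. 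Consequently your step ``fix any $m$ with $\frak s_1\subsetneq\frak s_m$'' is insufficient, and the uniform dimension bound $B$ you hope to extract from tensor powers of $U=W_{\frak g}|_{\frak g'}$ cannot exist in this generality --- this is exactly where the ``smallness'' hypothesis of \cite{WZ} enters (and it is also why \refprop{prop11}, which stabilizes lengths for \emph{fixed} $k$ as $n\to\infty$, does not give a bound uniform over all simple $\frak g$-modules, whose tensor degree $p+q$ is unbounded). The repair is easy and is what the paper does: since $\dim\frak s_n\to\infty$, choose $m$ with $\dim\frak s_m-\dim\frak s_1$ large enough that Theorem 4.0.11 of \cite{WZ} applies; with that substitution the rest of your argument is correct and coincides with the paper's proof.
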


\begin{proof} Note first that $\dim M = \infty$.  This follows from
the fact that all $\frak s_n$ have no non-trivial common finite dimensional module since $\dim \frak s_n$ tends to $\infty$ when $n\to\infty$.  Now, assume to the contrary that $M$ is an $(\frak s, \frak k_0)$-module of finite type.  Then $M$ is a $(\frak s, \frak s_n)$-module of finite type for any $\frak s_n$.  We claim that this contradicts a result of Willenbring and Zuckerman. Indeed, Theorem 4.0.11 in \cite{WZ} implies that if the difference of dimensions $\dim \frak s_n - \dim \frak s_1$ is sufficiently large, then there is a finite number of simple finite dimensional $\frak s_1$-modules $W_1,\ldots, W_x$ such that any simple finite dimensional $\frak s_n$ module contains some $W_j$ as a $\frak s_1$-submodule.  It is an immediate consequence of this fact that any infinite dimensional $(\frak s, \frak s_n)$-module of finite type is an $(\frak s,\frak s_1)$-module of infinite type as some $W_j$ will appear with infinite multiplicity.  This contradiction shows that our assumption was false, i.e. $M$ is an $(\frak s,\frak k_0)$-module of infinite type.
\end{proof}

Let now $\frak g = \gl (p\Theta)$ where $\Theta = \theta_1\theta_2\ldots$ with $\theta_n > 1$ for all $n\in\ZZ_{>0}$, and let $\frak k_0 := \frak g_1 = \gl (p)$.  Set $\frak k_n := \frak k_0 + C_{\frak g_{n}} (\frak k_0)$ for $\frak g_n = \gl (p\theta_1\ldots \theta_{n-1})$, and $\frak k:= \cup_{n\in\ZZ_{>0}}\frak k_n$.  Then, as it is easy to check, $C_{\frak g_{n}} (\frak k_0) = \gl (\theta_1\ldots\theta_{n-1})$, and the inclusion $C_{\frak g_{n}} (\frak k_0)\subset C_{\frak g_{n+1}} (\frak k_0)$ is nothing but the $\theta_{n}$-diagonal inclusion.  Hence $\frak k\simeq \gl (p)+ \gl (\Theta)$.

\begin{prop}  The adjoint representation of $\gl (p\Theta)$ is a $C_{\frak g} (\frak k_0)$-module of finite length and thus, in particular, a $(\gl (p\Theta),\frak k)$-module of finite type.
\end{prop}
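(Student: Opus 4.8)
The plan is to determine the adjoint representation of $\gg=\gl(p\Theta)$, as a module over $C_\gg(\kk_0)$, completely explicitly, and then read off the required finiteness. Write $d_n:=\theta_1\cdots\theta_{n-1}$, so that $\gg_n=\gl(p\,d_n)$, and recall from above that $C_{\gg_n}(\kk_0)=\gl(d_n)$, with $C_{\gg_n}(\kk_0)\subset C_{\gg_{n+1}}(\kk_0)$ the $\theta_n$-diagonal inclusion. Since $\kk_0=\gg_1\subset\gg_n$ is the iterated diagonal embedding $X\mapsto X\otimes\id_{d_n}$, the natural $\gg_n$-module is $\CC^{p\,d_n}=\CC^p\otimes\CC^{d_n}$, with $\kk_0=\gl(p)$ acting on the first factor and $C_{\gg_n}(\kk_0)=\gl(d_n)$ on the second; hence $\gg_n=\mathrm{End}(\CC^p)\otimes\mathrm{End}(\CC^{d_n})$ as associative algebras, $\kk_0$ acting by commutators only on the first tensor factor and $C_{\gg_n}(\kk_0)$ only on the second. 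In particular, as a $C_{\gg_n}(\kk_0)$-module, $\gg_n$ is a direct sum of $p^2$ copies of $\gl(d_n)$ with the adjoint action.

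Next I would check that these factorizations are compatible along the chain. Using $\CC^{p\,d_{n+1}}=\CC^p\otimes\CC^{d_n}\otimes\CC^{\theta_n}=\CC^{p\,d_n}\otimes\CC^{\theta_n}$, the embedding $\gg_n\hookrightarrow\gg_{n+1}$, $W\mapsto W\otimes\id_{\theta_n}$, carries $a\otimes b\in\mathrm{End}(\CC^p)\otimes\mathrm{End}(\CC^{d_n})$ to $a\otimes(b\otimes\id_{\theta_n})$, and $b\mapsto b\otimes\id_{\theta_n}$ is exactly the $\theta_n$-diagonal inclusion $\gl(d_n)\hookrightarrow\gl(d_{n+1})$. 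Passing to the union therefore gives an isomorphism of $C_\gg(\kk_0)$-modules
\[
\gg\;\cong\;\mathrm{End}(\CC^p)\otimes\gl(\Theta)\;\cong\;\gl(\Theta)^{\oplus p^2},
\]
where $\gl(\Theta)=C_\gg(\kk_0)$ acts on itself by the adjoint action and $\mathrm{End}(\CC^p)$ is just a $p^2$-dimensional multiplicity space. (Equivalently this is the splitting of $\gg$ into $\kk_0$-isotypic components: the scalars in $\mathrm{End}(\CC^p)$ give the $\kk_0$-trivial component $\gg^{\kk_0}=C_\gg(\kk_0)$, and the traceless part of $\mathrm{End}(\CC^p)$ gives the adjoint-isotypic component, accounting for the remaining $p^2-1$ copies.)

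Now $\gl(\Theta)=Z_{\gl(\Theta)}\oplus\sl(\Theta)$ with $Z_{\gl(\Theta)}$ one-dimensional and $\sl(\Theta)$ simple (being locally simple); hence the only ideals of $\gl(\Theta)$ are $0$, $Z_{\gl(\Theta)}$, $\sl(\Theta)$, $\gl(\Theta)$, so $\gl(\Theta)$ has length $2$ as a module over itself, and therefore $\gg\cong\gl(\Theta)^{\oplus p^2}$ has length $2p^2$ as a $C_\gg(\kk_0)$-module. This is the first assertion. For the second, note that $\gg$ is a $(\gg,\kk)$-module: since $\kk_n=\kk_0+C_{\gg_n}(\kk_0)\subset\gg_n$ is finite-dimensional and $\gg_m$ is $\kk_n$-stable for every $m\ge n$, each element of $\gg$ lies in some $\gg_m$ and hence generates a finite-dimensional $\kk_n$-submodule, for every $n$. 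Finally, because $C_\gg(\kk_0)\subset\kk$, every $\kk$-submodule of $\gg$ is in particular a $C_\gg(\kk_0)$-submodule, so $\gg$ has length $\le 2p^2$ as a $\kk$-module as well; in particular every finitely generated $\kk$-submodule of $\gg$ has length $\le 2p^2$, and every simple $\kk$-module occurs in $\gg$ with multiplicity $\le 2p^2$. Thus $\gg$ is a $(\gg,\kk)$-module of finite type.

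The only real work is the bookkeeping in the first two paragraphs — verifying that the factorizations $\gg_n=\mathrm{End}(\CC^p)\otimes\mathrm{End}(\CC^{d_n})$ are compatible with the embeddings of the chain, so that they pass to the direct limit. I expect no genuine obstacle here, since the chain embeddings are diagonal, for which $W\mapsto W\otimes\id$ is visibly of product form, and the identification of the maps $C_{\gg_n}(\kk_0)\subset C_{\gg_{n+1}}(\kk_0)$ with the $\theta_n$-diagonal embeddings of $\gl(d_n)$ was already recorded above.
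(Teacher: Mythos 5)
Your argument is correct, but it takes a different route from the paper's. The paper's proof is a first instance of the technique it will reuse later (Propositions \ref{prop35} and \ref{prop42}): it observes that $\gg_n=\gl(p\,\theta_1\cdots\theta_{n-1})\cong V_n^p\otimes (V_n^*)^p$ embeds into $T^2(V_n^p\oplus(V_n^*)^p)$ as a $C_{\gg_n}(\kk_0)$-module and then invokes the stabilization result (Proposition \ref{prop11}, itself resting on the stable branching rules of \cite{HTW}) to conclude that the lengths of the $\gg_n$ over $C_{\gg_n}(\kk_0)$ are uniformly bounded, whence the limit has finite length. You instead exploit the Kronecker-product structure special to this situation: $\gg_n=\mathrm{End}(\CC^p)\otimes\mathrm{End}(\CC^{d_n})$ with $\kk_0$ and its centralizer sitting in the two factors, and this factorization is compatible with the diagonal chain maps, so $\gg\cong\gl(\Theta)^{\oplus p^2}$ as a $C_\gg(\kk_0)$-module. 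This is entirely elementary (no appeal to Proposition \ref{prop11}) and yields the sharper statement that the length is exactly $2p^2$, using only that $\gl(\Theta)=Z_{\gl(\Theta)}\oplus\sl(\Theta)$ with $\sl(\Theta)$ simple; your deduction of finite type from the bound on $C_\gg(\kk_0)$-length, and of $\kk$-integrability from the $\kk_n$-stability of the $\gg_m$, is also correct and fills in what the paper leaves to the reader. The trade-off is that your explicit decomposition is specific to the adjoint module of $\gl(p\Theta)$, whereas the paper's tensor-power argument is the one that generalizes to the induced modules $M(\pp,E)$ treated afterwards.
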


\begin{proof}  The statement follows from the observation that for each $n$, the adjoint representation of $\gl (p \theta_1\ldots \theta_{n-1})$ considered as a $C_{\frak g_{n}}(\frak k_0) = \gl (\theta_1\ldots \theta_{n-1})$-module is a submodule of $T^2(V^p_n \oplus (V^*_n)^p)$, where $V_n$ is the natural $\gl (\theta_1\ldots\theta_{n-1})$-module.  By \refprop{prop11}, the length of $T^2(V^p_n\oplus (V^*_n)^p)$ as an $\sl (\theta_1\ldots\theta_{n-1})$-module stabilizes for $n\to\infty$, hence the length of $\gl (p\theta_1\ldots\theta_{n-1})$ considered as a $C_{\frak g_{n}}(\frak k_0)$-module is bounded for $n\to\infty$. The reader will check immediately that this implies that the adjoint module of $\gl (p\Theta)$ has finite length as a $C_{\frak g}(\frak k_0)$-module.
\end{proof}

\subsection{The Zuckerman functor}  In this Subsection $\frak g$ is any Lie algebra and $\frak k'\subset \frak g$ is a finite dimensional subalgebra which acts locally finitely and semisimply on $\frak g$. For instance, if $\frak g = \cup_n\frak g_n$ is locally reductive and $\frak k'\subset\frak g_n$ is a reductive in $\frak g_n$ subalgebra for some $n$, the above condition is satisfied.

By $\mathcal{C}(\frak g,\frak k')$ we denote the category of all $(\frak g,\frak k')$-modules which are semisimple over $\frak k'$.  For any reductive in $\frak k'$ subalgebra $\frak m'\subset\frak k'$, we consider the left exact functor
\begin{eqnarray*}
\Gamma_{\frak{k}',\frak{m}'} : \mathcal{C} (\frak g, \frak{m}')&\to& \mathcal{C}(\frak g, \frak{k}') \\
M&\mapsto& \Gamma_{\frak{k}',\frak{m}'} (M) := \sum\limits_{X\subset M, X\in Ob(\mathcal{C}(\frak g,\frak{k}'))} X\,\,.
\end{eqnarray*}
The category $\mathcal{C}(\frak g, \frak m')$ has sufficiently many injectives and hence one can introduce the right derived functor $R^\cdot \Gamma_{\frak k',\frak m'}$.  This functor is known as \emph{the Zuckerman functor}.

A well known property of the Zuckerman functor which we use below is that if $Z_{U(\frak g)}$ acts via a fixed character on $M$, then $Z_{U(\frak g)}$ acts via the same character on $R^\cdot\Gamma_{\frak k', \frak m'} (M)$. The following two propositions discuss some further fundamental properties of the functor $R^\cdot\Gamma_{\frak k',\frak m'}$.

\begin{prop}\label{prop14}~
\item[(a)] (restriction principle).  Let $\frak g'\subset\frak g$ be an arbitrary Lie subalgebra of $\frak g$ such that $\frak k'\subset \frak g'$.  Then the diagram of functors
\begin{eqnarray*}
\xymatrix{
\mathcal{C} (\frak g, \frak m')\ar[d]\ar[r]^{R^\cdot\Gamma_{\frak k',\frak m'}}& \mathcal{C}(\frak g, \frak k') \ar[d]\\
\mathcal{C} (\frak g', \frak m')\ar[r]^{R^\cdot \Gamma_{\frak k',\frak m'} }& \mathcal{C}(\frak g',\frak k'), }
\end{eqnarray*}
whose vertical arrows are restriction functors, is commutative.
\item[(b)] Let $U^0(\frak k') :=\Gamma_{\frak k',\frak m'} (\Hom_{\CC} (U(\frak k'),\CC))$.  Then $U^0(\frak k')$ is a $U(\frak k')$-bimodule, and for any $M$ in $\mathcal{C} (\frak k',\frak m')$ there is a natural isomorphism of $\frak k'$-modules
$$
R^\cdot \Gamma_{\frak k',\frak m'} (M) \cong H^\cdot (\frak k',\frak m', M\otimes U^0(\frak k'))
$$
(here we apply $R^\cdot\Gamma_{\frak k'\frak m'}$ to objects of $\mathcal{C}(\frak k',\frak m')$ by setting $\frak g' = \frak k'$, see (a)).
\item[(c)] Let $M$ be an inductive limit $\varinjlim M_i$ of modules $M_i$ in $\mathcal{C} (\frak k', \frak m')$.  Then
$$
R^\cdot\Gamma_{\frak k',\frak m'} (M) \cong \varinjlim R^\cdot\Gamma_{\frak k',\frak m'} (M_i).
$$
\end{prop}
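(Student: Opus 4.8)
The plan is to prove Proposition~\ref{prop14} in three parts, each reducing to a standard homological fact once the right finite-dimensionality bookkeeping is in place.

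For part (a), the key observation is that the category $\mathcal{C}(\frak g,\frak m')$ has enough injectives, and that the restriction functor $\mathcal{C}(\frak g,\frak m')\to\mathcal{C}(\frak g',\frak m')$ sends injectives to injectives, because it has an exact left adjoint given by $N\mapsto \Gamma_{\frak m',\frak m'}(U(\frak g)\otimes_{U(\frak g')}N)$ (the $\frak m'$-locally finite part of the induced module; exactness here uses that $U(\frak g)$ is free, hence flat, as a right $U(\frak g')$-module, and that taking $\frak m'$-finite vectors is exact since $\frak m'$ acts semisimply). Granting this, one computes $R^\cdot\Gamma_{\frak k',\frak m'}$ of $M$ in $\mathcal{C}(\frak g,\frak m')$ via an injective resolution $M\to I^\cdot$; restricting to $\frak g'$ yields an injective resolution in $\mathcal{C}(\frak g',\frak m')$, and since the functor $\Gamma_{\frak k',\frak m'}$ on the two categories agrees on underlying $\frak k'$-modules (it is the sum of the $\frak k'$-finite submodules, a condition that does not see the ambient $\frak g$- versus $\frak g'$-structure), the two derived functors compute the same cohomology. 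This gives commutativity of the square.

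For part (b), I would first identify $U^0(\frak k')$ as the injective hull, in $\mathcal{C}(\frak k',\frak m')$, of the trivial module, and more precisely establish that for any $M$ in $\mathcal{C}(\frak k',\frak m')$ the module $M\otimes U^0(\frak k')$ is injective in that category — this is the standard ``adjoint is injective'' argument, using that $\Hom_\CC(U(\frak k'),\CC)$ is injective as a $U(\frak k')$-module and that tensoring with $M$ and then taking $\frak k'$-finite vectors preserves injectivity (because $M$ is $\frak k'$-locally finite, so $M\otimes(-)$ has an exact adjoint $M^*\otimes(-)$ on the relevant subcategory). Then one resolves $\CC$ by the relative standard (Koszul-type) complex computing $H^\cdot(\frak k',\frak m';-)$; tensoring the resolution $\CC\to U^0(\frak k')\otimes\Lambda^\cdot$ — more carefully, the relative Chevalley--Eilenberg resolution $U(\frak k')\otimes_{U(\frak m')}\Lambda^\cdot(\frak k'/\frak m')\to\CC$ dualized and made $\frak m'$-finite — with $M$ yields an injective resolution of $M$ in $\mathcal{C}(\frak k',\frak m')$ whose $\Gamma_{\frak k',\frak m'}$-image is exactly the relative cochain complex $\Hom_{\frak k'}(\Lambda^\cdot(\frak k'/\frak m'), M\otimes U^0(\frak k'))$, giving the claimed isomorphism $R^\cdot\Gamma_{\frak k',\frak m'}(M)\cong H^\cdot(\frak k',\frak m', M\otimes U^0(\frak k'))$.

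For part (c), I would use part (b) to reduce to a statement about relative Lie algebra cohomology: $R^\cdot\Gamma_{\frak k',\frak m'}(M)\cong H^\cdot(\frak k',\frak m', M\otimes U^0(\frak k'))$, and then note that $H^\cdot(\frak k',\frak m',-)$ is computed by a complex of $\Hom$-spaces out of the \emph{finite-dimensional} spaces $\Lambda^j(\frak k'/\frak m')$; since $\frak k'$ and $\frak m'$ are finite dimensional, each cochain group is a finite limit/colimit construction, and filtered colimits are exact and commute with both finite products and with taking cohomology of a complex. One also needs $(\varinjlim M_i)\otimes U^0(\frak k')\cong\varinjlim(M_i\otimes U^0(\frak k'))$ and that the Zuckerman functor $\Gamma_{\frak k',\frak m'}$ itself commutes with filtered colimits (which it does, being ``take the sum of $\frak k'$-finite submodules,'' and an element of a filtered colimit lies in some $M_i$ with the same $\frak k'$-finiteness). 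Assembling these gives $R^\cdot\Gamma_{\frak k',\frak m'}(\varinjlim M_i)\cong\varinjlim R^\cdot\Gamma_{\frak k',\frak m'}(M_i)$.

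The main obstacle I expect is part (b): pinning down precisely that $M\otimes U^0(\frak k')$ is injective in $\mathcal{C}(\frak k',\frak m')$ and that tensoring the relative Chevalley--Eilenberg resolution with $M$ produces an honest injective resolution in the \emph{relative} category — the interplay between the $\frak m'$-finiteness condition built into $U^0(\frak k')$ and the relative homological algebra for the pair $(\frak k',\frak m')$ requires care, and is the step where one must be most explicit about which module-theoretic operations preserve injectivity. Parts (a) and (c) are then comparatively formal consequences once the machinery of part (b) is available.
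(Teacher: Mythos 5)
Your argument for (a) (induction $Q\mapsto U(\frak g)\otimes_{U(\frak g')}Q$ as an exact left adjoint to restriction, hence restriction preserves injectives, so one injective resolution computes both derived functors) and for (c) (reduce via (b) to the relative cochain complex $\Hom_{\frak m'}(\Lambda^\cdot(\frak k'/\frak m'),\,-\,)$, which commutes with inductive limits because $\frak k'$ is finite dimensional, then use that cohomology commutes with inductive limits) is exactly the paper's proof. For (b) the paper does not give an argument at all but simply cites the isomorphism (4.5) of Enright--Wallach; your sketch is the standard argument underlying that citation, so the proposal is correct and essentially matches the paper's approach.
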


\begin{proof}  \item[(a)] It suffices to show that an injective object $I$ in
$\mathcal{C}(\frak g,\frak m')$ is also injective in $\mathcal{C}(\frak g', \frak m')$.  If $Q$ is an arbitrary object in $\mathcal{C}(\frak g', \frak m')$, then $U(\frak g)\otimes_{U(\frak g')} Q$ is an object in $\mathcal{C}(\frak g, \frak m')$, and the functor
$$
Q\mapsto U (\frak g)\otimes_{U (\frak g')} Q
$$
is exact.  The natural isomorphism $\text{Hom}_{\frak g} (U (\frak g)\otimes_{U\overline{} (\frak g')} Q, I)= \text{Hom}_{\frak g'} (Q,I)$ shows that $I$ represents an exact functor in $\mathcal{C}(\frak g',\frak m')$. Therefore $I$ is injective in $\mathcal{C}(\frak g', \frak m')$, and (a) follows.

\item[(b)] This statement is a rephrasing of the isomorphism (4.5) in \cite{EW}.

\item[(c)] For any $M$ in $\mathcal{C} (\frak k',\frak m')$, we use the standard complex for relative Lie algebra cohomology:
$$
C^\cdot (\frak k', \frak m', M\otimes U^0 (\frak k')) = \Hom_{\frak m'} (\Lambda^\cdot (\frak k'/\frak m'), M\otimes U^0 (\frak k')).
$$
As $\frak k'$ is finite-dimensional, we have an isomorphism
$$
C^\cdot(\frak k',\frak m', M\otimes U^0 (\frak k'))\simeq\varinjlim C^\cdot(\frak k',\frak m', M_i\otimes U^0 (\frak k')),
$$
and the fact that cohomology commutes with inductive limits implies (c).
\end{proof}

\begin{prop}\label{prop15} (comparison principle).  Suppose $\frak k' = \frak k''\oplus \frak k'''$ is a decomposition into two ideals, and let $\mm''$ be a reductive in $\frak k''$ subalgebra.  Set  $\frak m' := \frak m''\oplus\frak k'''$.  Then for any $(\frak g,\frak m')$-module $M$, there is a natural isomorphism of $\frak g$-modules
\begin{equation}\label{eq2}
R^\cdot\Gamma_{\frak k',\frak m'} (M) \simeq R^\cdot\Gamma_{\frak k'',\frak m''}(M).
\end{equation}
\end{prop}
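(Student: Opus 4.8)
The plan is to reduce the statement to the defining property of the Zuckerman functor as a derived functor, using the restriction principle of \refprop{prop14}(a) and the cohomological formula of \refprop{prop14}(b) to localize everything to the finite-dimensional pair $(\frak k', \frak m')$, respectively $(\frak k'', \frak m'')$. By \refprop{prop14}(a), both sides of \refeq{eq2} are computed by first restricting $M$ to the appropriate finite-dimensional subalgebra and then applying the corresponding Zuckerman functor, with the $\frak g$-module structure carried along; so it suffices to produce a natural isomorphism of the form $R^\cdot\Gamma_{\frak k',\frak m'}\simeq R^\cdot\Gamma_{\frak k'',\frak m''}$ on the category $\mathcal{C}(\frak k',\frak m')$, functorial enough that it respects the ambient $\frak g$-action.

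First I would invoke \refprop{prop14}(b) to rewrite $R^\cdot\Gamma_{\frak k',\frak m'}(M)\cong H^\cdot(\frak k',\frak m', M\otimes U^0(\frak k'))$ and similarly $R^\cdot\Gamma_{\frak k'',\frak m''}(M)\cong H^\cdot(\frak k'',\frak m'', M\otimes U^0(\frak k''))$. Since $\frak k'=\frak k''\oplus\frak k'''$ and $\frak m'=\frak m''\oplus\frak k'''$, the relative chain complex splits off the $\frak k'''$-directions trivially: $\Lambda^\cdot(\frak k'/\frak m')\cong\Lambda^\cdot(\frak k''/\frak m'')$ as $\frak m'$-modules because $\frak k'/\frak m' = \frak k''/\frak m''$ (the $\frak k'''$-summand cancels). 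The next point is to identify $U^0(\frak k')$ with $U^0(\frak k'')$ as far as the computation sees them: taking $\frak m'$-finite vectors of $\Hom_\CC(U(\frak k'),\CC)=\Hom_\CC(U(\frak k'')\otimes U(\frak k'''),\CC)$ and using that $\frak k'''\subset\frak m'$ acts locally finitely, one gets $U^0(\frak k')\cong U^0(\frak k'')\otimes U^0(\frak k''')$ where the $\frak k'''$-factor contributes only through its trivial isotypic behavior on $\frak m'$-finite vectors; the tensoring by $M$ over this extra factor does not change relative $(\frak k'',\frak m'')$-cohomology because the extra directions are already in $\frak m'$ and act semisimply. Assembling these identifications gives a canonical isomorphism of complexes $C^\cdot(\frak k',\frak m', M\otimes U^0(\frak k'))\simeq C^\cdot(\frak k'',\frak m'', M\otimes U^0(\frak k''))$, natural in $M$, hence the desired isomorphism on cohomology. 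Because the $\frak g$-action on $U^0(\frak k')$ comes only from the $\frak g$-action on $M$ (the bimodule $U^0$ itself has no $\frak g$-structure), naturality in $M$ automatically upgrades this to an isomorphism of $\frak g$-modules, which is what \refeq{eq2} asserts.

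An alternative, perhaps cleaner, route avoids \refprop{prop14}(b) and argues directly with derived functors: show that $\Gamma_{\frak k',\frak m'}=\Gamma_{\frak k'',\frak m''}$ as functors on $\mathcal{C}(\frak g,\frak m')$ — indeed a submodule is $\frak k'$-semisimple and locally finite iff it is $\frak k''$-semisimple and locally finite, given that $\frak k'''\subset\frak m'$ already acts semisimply and locally finitely on every object of $\mathcal{C}(\frak g,\frak m')$ and $\frak k''$ and $\frak k'''$ commute — and then observe that the two functors are derived in the same category $\mathcal{C}(\frak g,\frak m')$, so their right derived functors coincide. I would present this as the main argument and mention the cohomological picture as a check.

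The main obstacle is the bookkeeping around $U^0$ and the category in which injectivity is tested: one must be careful that "$R^\cdot\Gamma_{\frak k'',\frak m''}$" in \refeq{eq2} is understood via \refprop{prop14}(a) as the derived functor computed with injectives of $\mathcal{C}(\frak g,\frak m')$ (not of $\mathcal{C}(\frak g,\frak m'')$, which is a larger category since $\frak m''\subset\frak m'$). The clean fix is that the hypotheses force $\mathcal{C}(\frak g,\frak m')=\mathcal{C}(\frak g,\frak m'')\cap\{\frak k'''\text{-integrable modules}\}$ and $\frak k'''$-integrability is an exact condition preserved by the relevant functors, so injective objects can be matched up; verifying this compatibility carefully is where the real work lies, everything else being the formal splitting described above.
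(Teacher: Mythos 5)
You correctly locate the crux of the matter in your final paragraph, but you do not resolve it, and the resolution is the actual content of the paper's proof. The difficulty is this: $\Gamma_{\frak k'',\frak m''}$ is defined, and hence derived, on the category $\mathcal{C}(\frak g,\frak m'')$, which strictly contains $\mathcal{C}(\frak g,\frak m')$. So even after you observe (correctly) that $\Gamma_{\frak k',\frak m'}$ and $\Gamma_{\frak k'',\frak m''}$ agree as functors on $\mathcal{C}(\frak g,\frak m')$, your claim that ``the two functors are derived in the same category $\mathcal{C}(\frak g,\frak m')$, so their right derived functors coincide'' is false as stated: the right-hand side of \refeq{eq2} must be computed from an injective resolution in $\mathcal{C}(\frak g,\frak m'')$, the left-hand side from one in $\mathcal{C}(\frak g,\frak m')$, and there is no a priori reason these give the same answer. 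Your proposed ``clean fix'' is only asserted (``verifying this compatibility carefully is where the real work lies''), and it is exactly this work that constitutes the proof. The paper supplies it as \refle{lemma16}: every injective $I$ of $\mathcal{C}(\frak g,\frak m')$ satisfies $R^t\Gamma_{\frak k'',\frak m''}(I)=0$ for $t>0$. This is proved by decomposing $I$ over $\frak k'=\frak k''\oplus\frak k'''$ as $\oplus_\lambda J_\lambda\boxtimes V_{\frak k'''}(\lambda)$, checking each $J_\lambda$ is injective in $\mathcal{C}(\frak k'',\frak m'')$, and applying \refprop{prop14}(b). Once acyclicity is known, a single $\mathcal{C}(\frak g,\frak m')$-injective resolution computes both sides, and the isomorphism is automatically one of $\frak g$-modules.

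Your first route (identifying the Enright--Wallach complexes $C^\cdot(\frak k',\frak m',M\otimes U^0(\frak k'))$ and $C^\cdot(\frak k'',\frak m'',M\otimes U^0(\frak k''))$) could in principle be made to work and would sidestep the resolution-matching issue, since \refprop{prop14}(b) is valid independently of the choice of injectives. But as written it has two gaps: the identification $(M\otimes U^0(\frak k'''))^{\frak k'''}\cong M$ hidden in your phrase ``contributes only through its trivial isotypic behavior'' needs an actual argument, and more seriously, \refprop{prop14}(b) only produces an isomorphism of $\frak k'$-modules, so the $\frak g$-equivariance of \refeq{eq2} does not follow ``automatically by naturality in $M$'' — the action of $g\in\frak g$ is not a morphism in $\mathcal{C}(\frak k',\frak m')$, so functoriality cannot be invoked in the way you suggest. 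The paper's route through acyclic resolutions in $\mathcal{C}(\frak g,\frak m')$ is what delivers the $\frak g$-module structure cleanly.
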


\begin{lemma}\label{lemma16} Under the assumptions of \refprop{prop15}, let $I$ be an injective object in $\mathcal{C}(\frak g,\frak m')$.  Then
$$
R^t \Gamma_{\frak k'',\frak m''} (I) = 0 \,\, \text{for} \,\, t > 0.
$$
\end{lemma}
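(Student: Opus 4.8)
The plan is to show that an injective object $I$ in $\mathcal{C}(\gg,\mm')$ is already $\Gamma_{\kk'',\mm''}$-acyclic, so that its higher derived functors vanish. First I would use the decomposition $\mm'=\mm''\oplus\kk'''$ and the fact that $\kk'''$ is an ideal of $\kk'$ to understand what injectivity in $\mathcal{C}(\gg,\mm')$ gives us: the category $\mathcal{C}(\gg,\mm')$ has enough injectives, and a standard construction (as in the proof of \refprop{prop14}(a), dualizing and coinducing from $U(\mm')$) produces injectives of the form $\Hom_{\mm'}(U(\gg),\,F)$ where $F$ is an injective (hence, over the reductive-acting finite dimensional $\mm'$, a semisimple-cofree) $\mm'$-module, or more precisely summands and limits of such. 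The key point is that any injective $I$ in $\mathcal{C}(\gg,\mm')$ is a direct summand of a coinduced module $\Hom_{\mm'}(U(\gg),W)$ for a suitable $\mm'$-semisimple $W$, and the derived Zuckerman functor commutes with direct summands, so it suffices to treat $I=\Hom_{\mm'}(U(\gg),W)$.

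Next I would reduce the computation of $R^\cdot\Gamma_{\kk'',\mm''}(I)$ to relative Lie algebra cohomology via \refprop{prop14}(b): restricting to $\kk''$ (legitimate by the restriction principle \refprop{prop14}(a), since $\kk''\subset\gg$), we get $R^\cdot\Gamma_{\kk'',\mm''}(I)\cong H^\cdot(\kk'',\mm'',I\otimes U^0(\kk''))$. Now the main structural observation is that, as a $\kk''$-module, $I$ is injective \emph{and} $\mm''$-semisimple in the relevant sense: restricting a coinduced module $\Hom_{\mm'}(U(\gg),W)$ along $\kk''\hookrightarrow\gg$ and using that $\mm'=\mm''\oplus\kk'''$ with $\kk''$ centralizing $\kk'''$ inside $\kk'$, one identifies the $\kk''$-module structure as a (possibly large) coinduced/injective module in $\mathcal{C}(\kk'',\mm'')$, because $U(\gg)$ is free as a right $U(\mm')$-module and $\mm''$ is a direct summand of $\mm'$. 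An injective object $J$ of $\mathcal{C}(\kk'',\mm'')$ satisfies $R^t\Gamma_{\kk'',\mm''}(J)=0$ for $t>0$ essentially by definition of a derived functor (injectives are acyclic), so once we know the restriction of $I$ to $\mathcal{C}(\kk'',\mm'')$ is injective, we are done.

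The cleanest route, therefore, is: (i) realize $I$ as a summand of $\Hom_{\mm'}(U(\gg),W)$; (ii) show $\Hom_{\mm'}(U(\gg),W)$ restricted to $\kk''$ is injective in $\mathcal{C}(\kk'',\mm'')$, using the adjunction $\Hom_{\kk''}(Q,\Hom_{\mm'}(U(\gg),W))\cong\Hom_{\mm'}(U(\gg)\otimes_{U(\kk'')}Q,\,W)$ for $Q\in\mathcal{C}(\kk'',\mm'')$ together with exactness of $Q\mapsto U(\gg)\otimes_{U(\kk'')}Q$ and of the forgetful functor down to $\mm''$-modules (here one uses that $\mm''$ is reductive in $\kk''$ and that $\mm'=\mm''\oplus\kk'''$, so that restricting a $\mathcal{C}(\gg,\mm')$-object to $\mm''$ stays semisimple); (iii) conclude $R^t\Gamma_{\kk'',\mm''}(I)=0$ for $t>0$ from acyclicity of injectives, then transport back along the summand. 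I expect step (ii) — precisely checking that the coinduced $\gg$-module, when restricted to $\kk''$, is injective in the \emph{relative} category $\mathcal{C}(\kk'',\mm'')$ rather than merely in $\mathcal{C}(\kk'',0)$ — to be the main obstacle, since one must keep careful track of the $\mm''$-semisimplicity condition through the restriction functor; this is where the hypothesis $\mm'=\mm''\oplus\kk'''$ with $\kk'''$ an ideal does the real work. Alternatively, if one prefers to avoid the coinduced-module bookkeeping, one can argue directly from \refprop{prop14}(b): write $R^\cdot\Gamma_{\kk'',\mm''}(I)\cong H^\cdot(\kk'',\mm'',I\otimes U^0(\kk''))$ and observe that $I$, being injective in $\mathcal{C}(\gg,\mm')$ and hence in $\mathcal{C}(\kk'',\mm'')$ by the restriction principle, makes $I\otimes U^0(\kk'')$ an injective $\kk''$-module relative to $\mm''$, whose relative cohomology in positive degrees vanishes.
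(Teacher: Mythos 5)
There is a genuine gap, and it sits exactly at the step you yourself flag as ``the main obstacle'': converting injectivity of $I$ relative to $\mm'$ into injectivity (or at least $\Gamma_{\kk'',\mm''}$-acyclicity) relative to the strictly smaller subalgebra $\mm''$. Neither of the two mechanisms you offer for this step works as stated. The ``alternative'' route misapplies the restriction principle: \refprop{prop14}(a) keeps the small subalgebra $\mm'$ fixed and only shrinks the ambient algebra $\gg$ to $\gg'\supseteq\kk'$; it never asserts that an injective of $\mathcal{C}(\gg,\mm')$ restricts to an injective of $\mathcal{C}(\kk'',\mm'')$ --- indeed $\mm'\not\subseteq\kk''$, so the two categories are not related by that principle at all, and proving this implication is the entire content of the lemma. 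The coinduction route has a similar problem: for the adjunction $\Hom_{\kk''}(Q,\Hom_{U(\mm')}(U(\gg),W))\cong\Hom_{U(\mm')}(U(\gg)\otimes_{U(\kk'')}Q,W)$ to yield exactness in $Q$, you need $\Hom_{U(\mm')}(-,W)$ to be exact on the modules $U(\gg)\otimes_{U(\kk'')}Q$; but these are not objects of $\mathcal{C}(\gg,\mm')$ (the $\kk'''$-direction of $\mm'$ acts by left multiplication and, since $\kk'''\not\subseteq\kk''$, cannot be pushed onto $Q$, so the action is not locally finite), and a semisimple locally finite $W$ is not injective as a plain $U(\mm')$-module. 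So exactness does not follow, and the extra complication of cutting down to the $\mm'$-locally finite part of the coinduced module (which is what the actual standard injectives are) only makes the bookkeeping worse.

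The idea your proposal is missing is the one that actually exploits the hypothesis that $\kk'''$ is an \emph{ideal} of $\kk'$ contained in $\mm'$. Restrict $I$ to $\kk'$; by the (correctly applied) restriction principle $I$ is injective in $\mathcal{C}(\kk',\mm')$. Since $\kk'''\subseteq\mm'$ acts semisimply and is an ideal, $I$ decomposes as a $\kk'$-module into $\kk'''$-isotypic components $\bigoplus_\lambda J_\lambda\boxtimes V_{\kk'''}(\lambda)$, each of which is a $\kk'$-direct summand, hence injective in $\mathcal{C}(\kk',\mm')$; this forces each $J_\lambda$ to be injective in $\mathcal{C}(\kk'',\mm'')$. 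One then applies \refprop{prop14}(b) and uses that relative Lie algebra cohomology commutes with direct sums to get $H^t(\kk'',\mm'',(J_\lambda\boxtimes V_{\kk'''}(\lambda))\otimes U^0(\kk''))=R^t\Gamma_{\kk'',\mm''}(J_\lambda)\boxtimes V_{\kk'''}(\lambda)=0$ for $t>0$. Note that this last step also quietly avoids having to claim that an infinite direct sum of injectives is injective, which your plan would additionally need. Your overall architecture (reduce to relative cohomology, show the restricted object is acyclic) is the right one, but without the isotypic decomposition the crucial implication is not established.
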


\textbf{Proof of \refle{lemma16}.} As a $\frak k'$-module $I$ can be decomposed as $\oplus_\lambda (J_\lambda \boxtimes V_{\frak k'''}(\lambda))$, where $\lambda$ runs over all dominant integral weights of $\frak k'''$ and where the $J_\lambda$'s are $(\frak k'',\frak m'')$-modules.  We claim that each $J_\lambda$ is injective in $\mathcal{C}(\frak k'',\frak m'')$.  Indeed, by the proof of the restriction principle (\refprop{prop14}(a)) $I$ is injective in $\mathcal{C} (\frak k',\frak m')$, hence for each $\lambda$, $J_\lambda\boxtimes V_{\frak k'''} (\lambda)$ is injective in $\mathcal{C}(\frak k', \frak m')$. Therefore $J_\lambda$ is injective in $C(\kk'',\mm'')$.

By \refprop{prop14}(b)
$$
R^\cdot \Gamma_{\frak k'',\frak m''} (I) \cong H^\cdot (\frak k'',\frak m'', I\otimes U^0(\frak k'')),
$$
and thus (since relative Lie algebra cohomology commutes with direct sums), it suffices to show that
\begin{equation}\label{eq03}
H^t(\frak k'',\frak m'', (J_\lambda \boxtimes V_{\frak k'''} (\lambda))\otimes U^0 (\frak k'')) = 0
\end{equation}
for $t > 0$. However,
\begin{equation*}
\begin{array}{l}
H^t (\frak k'',\frak m'', (J_\lambda\boxtimes V_{\frak k'''}(\lambda))\otimes U^0 (\frak k''))= \\
=H^t (\frak k'',\frak m'', J_\lambda\boxtimes U^0({\frak k''}))\boxtimes V_{\kk'''}(\lambda) =\\
=R^t\Gamma_{\kk'',\mm''}(J_\lambda)\boxtimes V_{\kk'''}(\lambda)=0
\end{array}
\end{equation*}
since $J_\lambda$ is injective in $C(\kk'',\mm'')$, and the Lemma follows. \qed

\textbf{Proof of \refprop{prop15}}  By \refle{lemma16}, any $\mathcal{C}(\frak g, \frak m')$-injective resolution of $M$ is $\Gamma_{\frak k'',\frak m''}$-acyclic hence it can be used both for the computation of $R^\cdot\Gamma_{\frak k',\frak m'}(M)$ and of $R^\cdot\Gamma_{\frak k'',\frak m''}(M)$.  This yields the natural isomorphism \refeq{eq2}. \qed
\medskip

\section{The Construction}\label{sec3}

Let $\frak g = \cup_n\frak g_n$ be a locally reductive Lie algebra and $\frak k_0\subset\frak g_1$ be a finite dimensional subalgebra reductive in $\frak g$ (equivalently, in $\frak g_1$). Fix a Cartan subalgebra $\frak t_0$ in $\frak k_0$. For any $\frak g_n$ we have the notion of a $\frak t_0$-{\it compatible parabolic subalgebra} of $\frak g_n$: by definition this is a parabolic subalgebra $\frak p_n\subset\frak g_n$ of the form $\bigoplus\limits_{\sigma,\text{Re} \sigma\geq 0} (\frak g_n)^\sigma_{h_{n}}$, where $h_n$ is a semisimple element of $\frak t_0,\sigma$ runs over the eigenvalues of $h_n$ in $\frak g_n$, and $(\frak g_n)^\sigma_{h_{n}}$ are the corresponding eigenspaces.  We call a subalgebra $\frak p\subset\frak g$ a {\it $\frak t_0$-compatible parabolic subalgebra} if, for all $n$, $\frak p\cap\frak g_n$ is a $\frak t_0$-compatible parabolic subalgebra of $\frak g_n$ and $\frak n_n = \frak n_{n+1}\cap\frak g_n$, where $\frak n_n$ is the nilradical of $\frak p_n$.  It is possible (but not required) that there is a semisimple element $h\in \frak t_0$ such that $\frak p = \bigoplus\limits_{\sigma, \text{Re} \sigma\geq 0} \frak g^\sigma_h$.

One can always choose decompositions $\frak p_n = \frak m_n \crplus \frak n_n$ where, for each $n$, $\frak m_n$ is a reductive in $\frak g_n$ subalgebra such that $\frak m_{n+1} \cap\frak g_n = \frak m_n$.  This yields a decomposition $\frak p = \frak m\crplus \frak n$, where $\frak m = \cup_n \frak m_n$ and $\cup_n\frak n_n$. By definition, $\frak n$ is the {\it nilradical} of $\frak p$ and $\frak m$ is a locally reductive subalgebra of $\frak g$.  In what follows, we consider the decomposition $\frak p = \frak m\crplus \frak n$ fixed and define $\bar{\frak n}$ as the union $\cup_n {\bar{{\frak n}}}_n$, where for each $n$, $\frak g_n = {\bar{{\frak n}}}_n\oplus \frak m_n\oplus{\frak n}_n$ is the canonical ${\frak m}_n$-module decomposition.  In this way, $\bar{\frak n}$ is of course an integrable $\frak m$-module.

%Any $\frak k_0$-compatible parabolic subalgebra $\frak p$ is of the form $\frak p = \frak m\oplus\frak n$, where $\frak m\cap\frak g_n$ and $\frak m\cap\frak g_n$ are respectively the reductive part and the nilradical of $\frak p_n$ for each $n\geq 1$.  The subalgebra $\frak m$ is a locally reductive subalgebra of $\frak g$, more precisely $\frak g_n\cap\frak m$ is a reductive in $\frak g_{n+1}\cap\frak m$ subalgebra of $\frak m$.  In what follows, we assume that $\frak p$ is a fixed $\frak k_0$-compatible parabolic subalgebra, and denote by $\frak{\bar m}$ the union $\cup_n \frak{\bar m}_n$, where for each $\frak g_n = \frak m_n\oplus\frak m_n\oplus\frak{\bar m}_n$ is the triangular decomposition determined by $\frak p_n$.

Let $\frak k := \frak k_0 + C_{\frak g} (\frak k_0)$.  Then $\frak k_n = \frak k_0 + C_{\frak g_{n}} (\frak k_0)$ is reductive in $\frak g$ for each $n$.  Note that $\frak k\cap\frak m = \frak m_0 + C_{\frak g} (\frak k_0)$, where $\frak m_0 := \frak k_0\cap\frak m$. Our goal is to construct nontrivial $(\frak g,\frak k)$-modules by starting with a nontrivial $(\frak m, \frak k\cap\frak m)$-module $E$ and then applying a functor of cohomological induction type. We first extend $E$ to a $\frak p$-module by setting $\frak n \cdot E = 0$. We then consider the induced module $M(\frak p, E) := \text{ind}^{\frak g}_{\frak p} E$.  This is an integrable $\frak m\cap\frak k$-module.  Indeed, the equality of $\frak m$-modules $\frak g = \bar{\frak n} \oplus \frak m\oplus\frak n$ implies via the Poincar$\acute{e}$-Birkhoff-Witt theorem that $M(\frak p, E)$ has an $\frak m$-module filtration with associated graded equal to $S^\cdot (\bar{\frak n})\otimes E$.  Both $S^\cdot(\bar{\frak n})$ and $E$ are integrable $\frak m\cap\frak k$-modules, thus $M(\frak p, E)$ is also $\frak m\cap\frak k$-integrable.

We now set $A(\frak p, E) := R^s \Gamma_{\frak k_{0},\frak m_{0}} (M(\frak p, E))$, where $s := \frac{1}{2} \dim (\frak k_0/\frak m_0)$.  By definition $A(\frak p, E)$ is a $(\frak g,\frak k_0)$-module, but as we show below $A(\frak p,E)$ is in fact a $(\frak g, \frak k)$-module.  We also set $A(\frak p_0, E):= R^s \Gamma_{\frak k_{0},\frak m_{0}} (\text{ind}^{\frak k_{0}}_{\frak p_{0}} E)$, where $\frak p_0 := \frak k_0\cap\frak p$ and we regard $E$ as a module over $\frak m_0 + C_{\frak g}(\frak k_0)$ and $\text{ind}^{\frak k_{0}}_{\frak p_{0}} E$ as a $\frak k_0 + C_{\frak g} (\frak k_0)$ module. By \refprop{prop14}(a) there is a functorial morphism of $\frak k_0$-modules
$$
\Psi_E: A(\frak p_0, E)\to A(\frak p, E).
$$
Knapp and Vogan \cite{KV} call $\Psi_E$ the {\it bottom layer map}. In the present paper, we call any $\frak g$-subquotient of $A(\frak p,E)$ generated by vectors in $\text{im} \Psi_E$ {\it a bottom layer subquotient} of $A(\frak p,E)$.

Note that $\frak m_0\cap C_{\frak g} (\frak k_0) = Z_{\frak k_{0}}$. Therefore, if $\bb_{\mm_0}$ is a fixed Borel subalgebra of $\mm_0$, we can decompose $E$ as
$$
\bigoplus\limits_\nu V_{{\frak m}_{0}} (\nu) \boxtimes_{U(Z_{\kk_0})} E''_\nu,
$$
where we consider $E''_\nu := \text{Hom}_{{\frak m}_{0}} (V_{{\frak m}_{0}} (\nu ), E)$ as a $C_{\frak g} (\frak k_0)$-module and $\nu$ runs over all $\bb_{\mm_0}$-dominant integral weights of $\frak m_0$.

Fix now a Borel subalgebra $\frak b_0$ of $\frak k_0$ such that $\frak b_0 \cap\mm_0=\bb_{\mm_0}$.  This defines two Weyl group elements:  the element $w_{\frak k_{0}}\in W_{\frak k_{0}}$ of maximal length with respect to $\frak b_0$, and the element $w_{\frak m_{0}}\in W_{\frak m_{0}}$ of maximal length with respect to $\frak b_0\cap \frak m_0$.  For any $\frak b_{\mm_0}$-dominant $\frak k_0$-integral weight $\nu$, we set
$$
\nu^{\vee} := w_{\frak k_{0}}\circ w^{-1}_{\frak m_{0}} (\nu + \rho_{\frak b_{0}}) - \rho_{\frak b_{0}},
$$
where $\rho_{\frak b_{0}}$ is the half-sum of the $\frak b_0$-positive roots of $\frak k_0$.

\begin{lemma} \label{le21} The $\frak k$-module $A(\frak p_0, E)$ is $\frak k$-integrable and is isomorphic to $\bigoplus\limits_\nu V_{\frak k_{0}} (\nu^{\vee})\boxtimes_{U(Z_{\kk_0})} E''_\nu$, where as above $\nu$ runs over all dominant integral weights of $\frak m_0$, and where $V_{\frak k_{0}}  (\nu^{\vee}) := 0$ whenever $\nu^{\vee}$ is not $\bb_0$-dominant and integral for $\frak k_0$.
\end{lemma}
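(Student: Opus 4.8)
The plan is to reduce the computation of $A(\frak p_0,E)=R^s\Gamma_{\frak k_0,\frak m_0}(\text{ind}^{\frak k_0}_{\frak p_0}E)$ to the classical Bott--Borel--Weil description of the Zuckerman functor on a generalized Verma module of the finite dimensional reductive Lie algebra $\frak k_0$, carrying the commuting action of $C_{\frak g}(\frak k_0)$ along for free. The mechanism is \refprop{prop14}(b): $R^\cdot\Gamma_{\frak k_0,\frak m_0}(M)$ is the cohomology of the complex $\Hom_{\frak m_0}(\Lambda^\cdot(\frak k_0/\frak m_0),M\otimes U^0(\frak k_0))$, naturally in the $\frak k_0$-module $M$. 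Since $C_{\frak g}(\frak k_0)$ commutes with $\frak k_0$, it acts on $\text{ind}^{\frak k_0}_{\frak p_0}E$ by $\frak k_0$-module endomorphisms, so by naturality this description is automatically $C_{\frak g}(\frak k_0)$-equivariant; together with the restriction principle \refprop{prop14}(a) (ambient algebras $\frak k\supset\frak k_0$) this shows that $R^\cdot\Gamma_{\frak k_0,\frak m_0}$ is a well-behaved functor $\mathcal{C}(\frak k,\frak m_0)\to\mathcal{C}(\frak k,\frak k_0)$ with the expected underlying $\frak k_0$-module. Moreover, as $\Lambda^\cdot(\frak k_0/\frak m_0)$ is finite dimensional, this complex commutes with direct sums in $M$ and, on modules carrying a fixed $Z_{\kk_0}$-character, with the operation $(-)\boxtimes_{U(Z_{\kk_0})}E''$ of tensoring over $U(Z_{\kk_0})$ with a $C_{\frak g}(\frak k_0)$-module $E''$ of the same $Z_{\kk_0}$-character.

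Feeding in the decomposition $E\cong\bigoplus_\nu V_{\frak m_0}(\nu)\boxtimes_{U(Z_{\kk_0})}E''_\nu$ recalled above, and observing that $\text{ind}^{\frak k_0}_{\frak p_0}$ is additive in $E$ and leaves the $C_{\frak g}(\frak k_0)$-twist intact (because $Z_{\kk_0}\subset\frak p_0$ acts on $V_{\frak m_0}(\nu)$ by the scalar $\nu|_{Z_{\kk_0}}$), the previous paragraph gives an isomorphism of $\frak k$-modules
\[
A(\frak p_0,E)\;\cong\;\bigoplus_\nu R^s\Gamma_{\frak k_0,\frak m_0}\bigl(\text{ind}^{\frak k_0}_{\frak p_0}V_{\frak m_0}(\nu)\bigr)\boxtimes_{U(Z_{\kk_0})}E''_\nu .
\]
Everything thus reduces to identifying, together with its $Z_{\kk_0}$-action, the finite dimensional $\frak k_0$-module $R^s\Gamma_{\frak k_0,\frak m_0}(\text{ind}^{\frak k_0}_{\frak p_0}V_{\frak m_0}(\nu))$. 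Here $\text{ind}^{\frak k_0}_{\frak p_0}V_{\frak m_0}(\nu)$ is a generalized Verma module of $\frak k_0$, and the Bott--Borel--Weil theorem for the Zuckerman functor (see e.g. \cite{KV} and \cite{EW}, and the finite dimensional computations underlying \cite{PZ2}) asserts that in the bottom-layer degree $s=\frac12\dim(\frak k_0/\frak m_0)=\ell(w_{\frak k_0}w^{-1}_{\frak m_0})$ one obtains $V_{\frak k_0}(\nu^{\vee})$ with $\nu^{\vee}=w_{\frak k_0}w^{-1}_{\frak m_0}(\nu+\rho_{\frak b_0})-\rho_{\frak b_0}$, under the convention $V_{\frak k_0}(\nu^{\vee})=0$ whenever $\nu^{\vee}$ is not $\frak b_0$-dominant integral; the $\rho_{\frak b_0}$-shift makes this compatible with the $W_{\frak k_0}$-singular case, since $\nu^{\vee}$ dominant integral forces $\nu^{\vee}+\rho_{\frak b_0}=w_{\frak k_0}w^{-1}_{\frak m_0}(\nu+\rho_{\frak b_0})$, hence $\nu+\rho_{\frak b_0}$, to be regular. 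Finally, the Zuckerman functor preserves the $Z_{U(\frak k_0)}$-action, hence the $U(Z_{\kk_0})$-action, and $\nu^{\vee}|_{Z_{\kk_0}}=\nu|_{Z_{\kk_0}}$ (Weyl group elements act trivially on $Z_{\kk_0}$ and the roots vanish on $Z_{\kk_0}$), so the twist $\boxtimes_{U(Z_{\kk_0})}E''_\nu$ in the displayed formula makes sense and the formula becomes the assertion of the Lemma.

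It remains to check $\frak k$-integrability. In the explicit description each $V_{\frak k_0}(\nu^{\vee})$ is finite dimensional, and each $E''_\nu=\Hom_{\frak m_0}(V_{\frak m_0}(\nu),E)$ is, up to the one dimensional $Z_{\kk_0}$-twist carried by $V_{\frak m_0}(\nu)$, a $C_{\frak g}(\frak k_0)$-submodule of $E$ --- the $\frak m_0$-isotypic components of $E$ being $C_{\frak g}(\frak k_0)$-stable since $\frak m_0$ and $C_{\frak g}(\frak k_0)$ commute --- hence $C_{\frak g}(\frak k_0)$-integrable, $E$ being so. Each $\frak k_n=\frak k_0+C_{\frak g_n}(\frak k_0)$ is finite dimensional, preserves every summand $V_{\frak k_0}(\nu^{\vee})\boxtimes_{U(Z_{\kk_0})}E''_\nu$ and acts locally finitely on it, while each vector of $A(\frak p_0,E)$ lies in finitely many summands; therefore $A(\frak p_0,E)$ is $\frak k_n$-integrable for every $n$, i.e. $\frak k$-integrable.

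The step I expect to be the main obstacle is the finite dimensional identification above --- establishing that the degree-$s$ cohomology of the Zuckerman functor on the generalized Verma module $\text{ind}^{\frak k_0}_{\frak p_0}V_{\frak m_0}(\nu)$ is exactly $V_{\frak k_0}(\nu^{\vee})$ for the $\nu^{\vee}$ in the statement, degenerate cases included. The reduction to this finite dimensional statement and the verification of $\frak k$-integrability are essentially formal once \refprop{prop14} is in hand.
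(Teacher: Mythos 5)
Your argument is correct and follows essentially the same route as the paper, which simply cites the Bott--Borel--Weil theorem of Enright--Wallach (\cite[Proposition~6.3]{EW}) as giving the statement directly; the decomposition of $E$ into $\frak m_0$-isotypic components, the commutation of $R^s\Gamma_{\frak k_0,\frak m_0}$ with the $C_{\frak g}(\frak k_0)$-twist, and the integrability check are exactly the formal steps the paper leaves implicit. The finite dimensional identification you isolate as the main obstacle is precisely the content of that cited theorem, so nothing further is needed.
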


\begin{proof} This statement is a direct corollary of the Bott-Borel-Weil theorem proved in \cite{EW}, see \cite[Proposition 6.3]{EW}. \end{proof}

The following theorem is our main result.

\begin{theo}\label{th22}

\item[(a)]  $A(\frak p, E)$ is a $(\frak g,\frak k)$-module.

\item[(b)]  If $M(\frak p, E)$ is an $(\frak m, \frak k\cap\frak m)$-module of finite type, then $A(\frak p, E)$ is a $(\frak g, \frak k)$-module of finite type.

\item[(c)] Assume $E = \cup_nE_n$ where each $E_n$ is an $(\frak m_n, \kk\cap\frak m_n)$-module on which $Z_{\frak m_{\frak n}}$ acts via a $1$-dimensional representation.  Then the bottom layer map $\Psi_E$ is an injection.  Assume that for some $\nu$, $E''_\nu\neq 0$ and $\nu^{\vee}$ is dominant integral for $\k_0$. Then $\Hom_{\frak k_{0}} (V_{\frak k_{0}}(\nu^{\vee}), A(\frak p, E)) =  E''_\nu$. Hence $A(\frak p, E)$ has a simple bottom layer subquotient.

\item[(d)]  Assume $E = \cup_n E_n$ where each $E_n$ is an $(\frak m_n, \kk\cap\frak m_n)$-module with $Z_{U(\frak m_{n})}$-character, that $A(\frak p,E)\neq 0$, and that for some $N$ the $Z_{U(\frak g_{N})}$ -character of $\ind^{\frak g_{N}}_{\frak p_{N}} E_N$ is not regular integral.  Then some bottom layer subquotient of $A(\frak p, E)$ is not an integrable $\frak g$-module.  If in addition, $\frak k$ is a maximal subalgebra of $\frak g$, then some simple bottom layer subquotient of $A(\frak p, E)$ is a strict $(\frak g, \frak k)$-module.

\item[(e)] Under the assumptions of (c) assume further that $\frak m = C_{\frak g} (\frak t_0)$ and that $E$ is simple. Then $\frak t_0$ acts via weight $\mu\in\frak t^*_0$ on $E, \, \mu^{\vee}$ is dominant integral for $\frak k_0$, and there is an isomorphism of $\frak k =\frak k_0 + C_{\frak g} (\frak k_0)$-modules
$$
A(\frak p_0, E)\simeq V_{\frak k_{0}} (\mu^{\vee})\boxtimes_{U(Z_{\frak k_{0}})} E''.
$$
where $E''$ equals $E$ considered as a $C_{\frak g} (\frak k_0)$-module.  Furthermore, $\Psi_E$ yields an isomorphism between the $\frak k$-modules $A(\frak p_0, E)$ and $V_{\frak k_{0}} (\mu^{\vee})\otimes \Hom_{\frak k_{0}} (V_{\frak k_{0}} (\mu^{\vee}), A(\frak p,E))$.

\item[(f)] If, under the assumptions of e), $\text{im} \Psi_E$ is a simple $\frak k$-submodule of $A(\frak p,E)$, then $A(\frak p,E)$ has a unique simple bottom layer subquotient.  A sufficient condition for the simplicity of $\text{im} \Psi_E$ is the inclusion $\frak m\subset\frak k$.
\end{theo}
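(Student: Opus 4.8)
The plan is to analyze part (f) entirely through the single $V_{\frak k_0}(\mu^\vee)$-isotypic component of $A(\frak p,E)$. Write $B:=\text{im}\,\Psi_E$, $A:=A(\frak p,E)$ and $W:=U(\frak g)\cdot B\subseteq A$. Since $A$ lies in $\mathcal{C}(\frak g,\frak k_0)$, both $A$ and its submodule $W$ are semisimple over $\frak k_0$. By (c) the map $\Psi_E$ is injective, so $B\cong A(\frak p_0,E)$, and by (e) this identifies $B$ with $V_{\frak k_0}(\mu^\vee)\otimes\Hom_{\frak k_0}(V_{\frak k_0}(\mu^\vee),A)$; that is, $B$ is exactly the $V_{\frak k_0}(\mu^\vee)$-isotypic component $A^{[\mu^\vee]}$ of $A$, and since $B\subseteq W\subseteq A$ with $B$ itself $V_{\frak k_0}(\mu^\vee)$-isotypic over $\frak k_0$, also $W^{[\mu^\vee]}=B$. (In particular $B\neq 0$, since $E$ is simple and $\mu^\vee$ is dominant integral.) This identification is the engine of the whole argument.

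First I would prove uniqueness, assuming $B$ is $\frak k$-simple. Semisimplicity over $\frak k_0$ gives $N^{[\mu^\vee]}=N\cap W^{[\mu^\vee]}=N\cap B$ for every $\frak g$-submodule $N\subseteq W$, so the condition $N\cap B=0$ is equivalent to $N$ having no $V_{\frak k_0}(\mu^\vee)$-isotypic part; and, $W$ being $\frak k_0$-semisimple, passing to the $V_{\frak k_0}(\mu^\vee)$-isotypic part commutes with arbitrary sums of submodules. Hence the sum $N_{\max}$ of all $\frak g$-submodules $N\subseteq W$ with $N\cap B=0$ again satisfies $N_{\max}\cap B=0$: it is the largest $\frak g$-submodule of $W$ transverse to $B$. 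It is proper (it misses $B\neq 0$), and any $\frak g$-submodule strictly containing it must meet $B$, hence, $B$ being $\frak k$-simple, contain $B$, hence equal $W=U(\frak g)B$; so $W/N_{\max}$ is simple. As $B$ injects into $W/N_{\max}$ and generates it over $\frak g$, $W/N_{\max}$ is a simple bottom layer subquotient. Conversely, for any simple bottom layer subquotient $S=N_2/N_1$, the image of $B\cap N_2$ in $S$ is a nonzero $\frak k$-subquotient of the simple module $B$, hence isomorphic to $B$; this forces $B\subseteq N_2$ and $B\cap N_1=0$, so $W\subseteq N_2$, and since $S$ is generated by the image of $B$ one gets $N_2=W+N_1$ and $S\cong W/(W\cap N_1)$, with $W\cap N_1$ a maximal $\frak g$-submodule of $W$ transverse to $B$. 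Being maximal and contained in the proper submodule $N_{\max}$, it equals $N_{\max}$, so $S\cong W/N_{\max}$, which gives both existence and uniqueness of the simple bottom layer subquotient.

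Next I would establish the sufficient condition. Suppose $\frak m\subseteq\frak k$. Under the hypotheses of (e) we have $\frak m=C_{\frak g}(\frak t_0)$, hence $\frak m_0=\frak k_0\cap\frak m=C_{\frak k_0}(\frak t_0)=\frak t_0$ (the centralizer of a Cartan subalgebra in a finite-dimensional reductive Lie algebra is that subalgebra). Then $\frak m=\frak m\cap\frak k=\frak m_0+C_{\frak g}(\frak k_0)=\frak t_0+C_{\frak g}(\frak k_0)$ (using the identity $\frak k\cap\frak m=\frak m_0+C_{\frak g}(\frak k_0)$ from the setup), with $\frak t_0$ acting on the simple module $E$ by the scalar $\mu$. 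Therefore $\frak m$-submodules of $E$ coincide with $C_{\frak g}(\frak k_0)$-submodules, so $E''=E$ is a simple $C_{\frak g}(\frak k_0)$-module; and by (e), $\text{im}\,\Psi_E\cong A(\frak p_0,E)\cong V_{\frak k_0}(\mu^\vee)\boxtimes_{U(Z_{\frak k_0})}E''$. Here $\mu^\vee$ is dominant integral for $\frak k_0$, so $V_{\frak k_0}(\mu^\vee)$ is a finite-dimensional simple $\frak k_0$-module, and the two tensor factors carry the same $Z_{\frak k_0}$-character: indeed $\mu^\vee|_{Z_{\frak k_0}}=\mu|_{Z_{\frak k_0}}$, because the Weyl group of $\frak k_0$ fixes the restriction of any weight to $Z_{\frak k_0}$ and $\rho_{\frak b_0}$ vanishes on $Z_{\frak k_0}$, while $Z_{\frak k_0}$ acts on $E''$ by $\mu|_{Z_{\frak k_0}}$. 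An external tensor product over $U(Z_{\frak k_0})$ of a finite-dimensional simple $\frak k_0$-module and a simple $C_{\frak g}(\frak k_0)$-module with matching $Z_{\frak k_0}$-character is simple over $\frak k=\frak k_0+C_{\frak g}(\frak k_0)$ (the standard argument, via the Jacobson density theorem applied to the finite-dimensional factor), so $\text{im}\,\Psi_E$ is $\frak k$-simple.

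The one substantive step, and the only place where the $\frak k$-simplicity hypothesis and the full strength of part (e) are used, is the identification $B=A^{[\mu^\vee]}$: it is exactly what converts the condition ``transverse to $B$'' into a condition on $V_{\frak k_0}(\mu^\vee)$-isotypic components, which is closed under sums and therefore yields a unique largest transverse $\frak g$-submodule $N_{\max}$ of $W$. After that the uniqueness is formal, and the sufficient-condition part is routine; the only bookkeeping that needs care there is the equality of the $Z_{\frak k_0}$-characters of the two tensor factors $V_{\frak k_0}(\mu^\vee)$ and $E''$.
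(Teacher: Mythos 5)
You prove only part (f) of the six-part theorem, and your proof of (f) explicitly takes (c) and (e) as inputs. That is the gap: parts (a)--(e) carry most of the content of the statement and none of them is routine. Concretely, (a) requires showing that $R^s\Gamma_{\frak k_0,\frak m_0}(M(\frak p,E))$ is $\frak k$-integrable, which the paper does via the Poincar\'e--Birkhoff--Witt filtration of $M(\frak p,E)$ restricted to $\frak k$ together with \refle{le23}, computing $R^\cdot\Gamma_{\frak k_0,\frak m_0}$ of the associated graded as $\bigoplus_{\nu,t}R^\cdot\Gamma_{\frak k_0,\frak m_0}(V_{\frak m_0}(\nu))\boxtimes_{U(Z_{\frak k_0})}X_{\nu,t}$; (b) needs the finite-type bookkeeping on that same associated graded, using that the modules $V_{\frak k_0}(\nu^\vee)$ are pairwise non-isomorphic and that $R^s\Gamma_{\frak k_0,\frak m_0}$ commutes with the filtration; (c) needs the Knapp--Vogan bottom layer theory at each finite level, the commutative squares relating $\Psi_{E_n}$ and $\Psi_{E_{n+1}}$, and the commutation of $R^\cdot\Gamma_{\frak k_0,\frak m_0}$ with direct limits (\refprop{prop14}(c)); (d) needs the vanishing statement $R^i\Gamma_{\frak k_0,\frak m_0}(\text{ind}^{\frak k_0}_{\frak p_0}F)=0$ for $i<s$ (\refle{le24}, proved via Poincar\'e duality and Shapiro's lemma) to get injectivity of the maps into $A(\frak p,E)$, followed by the central character argument; and (e) rests on the Bott--Borel--Weil computation of \refle{le21}. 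As a proof of the theorem as stated, the proposal is therefore incomplete.

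Within part (f), however, your argument is correct and is essentially the paper's own proof. Your $W$ and $N_{\max}$ are the paper's $A^{\#}$ and $A^{\$}$ (the $\frak g$-submodule generated by $\mathrm{im}\,\Psi_E$ and the sum of all its $\frak g$-submodules $X$ with $\Hom_{\frak k_0}(V_{\frak k_0}(\mu^\vee),X)=0$), and the engine you identify --- that (e) makes $\mathrm{im}\,\Psi_E$ the full $V_{\frak k_0}(\mu^\vee)$-isotypic component, so that ``transverse to $\mathrm{im}\,\Psi_E$'' is a condition closed under sums --- is exactly the mechanism the paper uses; your uniqueness argument for an arbitrary simple bottom layer subquotient is a welcome elaboration of what the paper leaves implicit. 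The sufficient condition is also handled as in the paper: $\frak m\subset\frak k$ forces $\frak m=\frak t_0+C_{\frak g}(\frak k_0)$, hence $E''$ is a simple $C_{\frak g}(\frak k_0)$-module and $A(\frak p_0,E)\simeq V_{\frak k_0}(\mu^\vee)\boxtimes_{U(Z_{\frak k_0})}E''$ is $\frak k$-simple, with your check that the two factors carry the same $Z_{\frak k_0}$-character being a useful added detail. So: correct on (f), same route as the paper there, but (a)--(e) remain to be proved.
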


\begin{proof}
\item[(a)] By construction, $M(\frak p, E)$ is a $(\frak g,\kk\cap\mm)$-module. Since $\frak m\cap\frak k\supset C_{\frak g} (\frak k_0), M(\frak p,E)$ is an integrable $C_{\frak g} (\frak k_0)$-module. Let $\tilde M$ denote the restriction of $M(\frak p,E)$ to $\frak k$:  by \refprop{prop14}(a)  $A(\frak p, E)$ is isomorphic as a $\frak k$-module to $R^s \Gamma_{\frak k_{0},\frak m_{0}} (\tilde M)$. By the Poincar\'e-Birkhoff-Witt Theorem, the $\frak k$-module $\tilde M$ has an increasing filtration with associated graded
\begin{equation}\label{eq4}
\mathrm{Gr} \,\, \tilde M = \bigoplus\limits_{t\in\ZZ_{\geq 0}}\, \text{ind}^{\frak k_{0}}_{\frak p_{0}} (S^t (\frak k^c_0\cap\bar{\frak n})\otimes E),
\end{equation}
where $\frak k^c_0$ is a fixed $\frak k_0$-module complement of $\frak k_0$ in $\frak g$.
\end{proof}

\begin{lemma}\label{le23}  $R^\cdot \, \Gamma_{\frak k_{0},\frak m_{0}} (\Gr
\, \tilde M)$ is a graded integrable $\frak k$-module. \end{lemma}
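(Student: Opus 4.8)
The plan is to analyze $R^\cdot\Gamma_{\frak k_0,\frak m_0}$ applied to the graded pieces $\text{ind}^{\frak k_0}_{\frak p_0}(S^t(\frak k^c_0\cap\bar{\frak n})\otimes E)$ appearing in \refeq{eq4}, and to show that each such piece is an integrable $\frak k$-module; since $R^\cdot\Gamma$ commutes with direct sums, the total object $R^\cdot\Gamma_{\frak k_0,\frak m_0}(\Gr\,\tilde M)$ is then a (graded) direct sum of integrable $\frak k$-modules, hence integrable. The key observation is that $S^t(\frak k^c_0\cap\bar{\frak n})$ is a finite-dimensional $\frak k_0$-module (since $\frak k^c_0\cap\bar{\frak n}_n$ stabilizes appropriately — more precisely $S^t$ of a fixed $\frak k_0$-module complement) — wait, more carefully: $\frak k^c_0\cap\bar{\frak n}$ is an integrable $\frak m_0$-module which is moreover integrable as a $C_{\frak g}(\frak k_0)$-module, so $S^t(\frak k^c_0\cap\bar{\frak n})\otimes E$ is a $(\frak k_0 + C_{\frak g}(\frak k_0))\cap\frak m = (\frak m_0 + C_{\frak g}(\frak k_0))$-integrable module, i.e. an object of $\mathcal{C}(\frak m_0 + C_{\frak g}(\frak k_0), \frak m_0)$ with $C_{\frak g}(\frak k_0)$ acting integrably.

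First I would decompose $S^t(\frak k^c_0\cap\bar{\frak n})\otimes E$ as $\bigoplus_\nu V_{\frak m_0}(\nu)\boxtimes_{U(Z_{\frak k_0})} F^{(t)}_\nu$ where each $F^{(t)}_\nu := \Hom_{\frak m_0}(V_{\frak m_0}(\nu), S^t(\frak k^c_0\cap\bar{\frak n})\otimes E)$ is an integrable $C_{\frak g}(\frak k_0)$-module, exactly as in the decomposition of $E$ preceding \refle{le21}. Then, applying \refle{le21} with $S^t(\frak k^c_0\cap\bar{\frak n})\otimes E$ in place of $E$, I get that $R^\cdot\Gamma_{\frak k_0,\frak m_0}(\text{ind}^{\frak k_0}_{\frak p_0}(S^t(\frak k^c_0\cap\bar{\frak n})\otimes E))$ is $\frak k$-integrable, being isomorphic to $\bigoplus_\nu V_{\frak k_0}(\nu^\vee)\boxtimes_{U(Z_{\frak k_0})} F^{(t)}_\nu$ — a direct sum of modules of the form (finite-dimensional $\frak k_0$-module) $\boxtimes$ (integrable $C_{\frak g}(\frak k_0)$-module), which is integrable over $\frak k = \frak k_0 + C_{\frak g}(\frak k_0)$ since the two summands commute and both act integrably. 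Here I use that $Z_{\frak k_0} = \frak m_0\cap C_{\frak g}(\frak k_0)$ acts compatibly on both tensor factors, so the $\boxtimes_{U(Z_{\frak k_0})}$ makes sense and the $\frak k$-action is well defined.

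The grading of $R^\cdot\Gamma_{\frak k_0,\frak m_0}(\Gr\,\tilde M)$ is inherited from the $t$-grading of $\Gr\,\tilde M$ together with the cohomological degree; $\frak k$ preserves each graded piece because $\frak k$ preserves each $\text{ind}^{\frak k_0}_{\frak p_0}(S^t(\frak k^c_0\cap\bar{\frak n})\otimes E)$ (the filtration on $\tilde M$ is a filtration of $\frak k$-modules, so $\Gr\,\tilde M$ and all its subquotients are $\frak k$-modules) and because the Zuckerman functor, being defined on $\mathcal{C}(\frak g,-)$ and here applied via $\frak g' = \frak k$ using \refprop{prop14}(a), produces $\frak k$-modules. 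The main obstacle — and really the only subtle point — is to verify carefully that "integrable over $\frak k_0$ plus integrable over $C_{\frak g}(\frak k_0)$, with the actions commuting and the common central piece $Z_{\frak k_0}$ identified," genuinely yields integrability over $\frak k = \frak k_0 + C_{\frak g}(\frak k_0)$: one must check that for each $m$ a vector generates a finite-dimensional $\frak k_m$-submodule, using that $\frak k_m = \frak k_0 + C_{\frak g_m}(\frak k_0)$ and that on a tensor product $V\boxtimes_{U(Z_{\frak k_0})} W$ with $V$ finite-dimensional over $\frak k_0$ and $W$ integrable over $C_{\frak g}(\frak k_0)$, the $\frak k_m$-submodule generated by $v\otimes w$ sits inside $V\otimes (\text{finite-dim } C_{\frak g_m}(\frak k_0)\text{-submodule of } W)$, which is finite-dimensional. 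This is routine but worth spelling out; everything else follows formally from \refle{le21} and the exactness/direct-sum properties of $R^\cdot\Gamma$.
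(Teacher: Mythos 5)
Your proposal is correct and follows essentially the same route as the paper: decompose each graded piece $S^t(\frak k_0^c\cap\bar{\frak n})\otimes E$ as $\bigoplus_\nu V_{\frak m_0}(\nu)\boxtimes_{U(Z_{\frak k_0})}X_{\nu,t}$ with $X_{\nu,t}$ integrable over $C_{\frak g}(\frak k_0)$, pull the $C_{\frak g}(\frak k_0)$-factor through the derived functor, and reduce to the finite-dimensional Bott--Borel--Weil computation for $\text{ind}^{\frak k_0}_{\frak p_0}V_{\frak m_0}(\nu)$. The only caveat is that \refle{le21} as stated identifies only the degree-$s$ cohomology, so your explicit formula $\bigoplus_\nu V_{\frak k_0}(\nu^\vee)\boxtimes F^{(t)}_\nu$ is literally valid only in that degree, but the integrability conclusion in every degree follows from the same BBW reasoning (the paper handles this by choosing $\Gamma_{\frak k_0,\frak m_0}$-acyclic resolutions $G^\cdot_\nu$ of $\text{ind}^{\frak k_0}_{\frak p_0}V_{\frak m_0}(\nu)$), so this is a notational imprecision rather than a gap.
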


\textbf{Proof of \refle{le23}.} Decompose the ${\frak m}_0+ C_{\frak g} (\frak k_0)$-module $S^t (\frak k_0^c\cap\bar{\frak n})\otimes E$ as
$$
\bigoplus\limits_\nu \, V_{\frak m_{0}} (\nu) \boxtimes_{U(Z_{\kk_0})} \, X_{\nu,t}
$$
for some $C_{\frak g} (\frak k_0)$-modules $X_{\nu,t}$.  Observe that each $X_{\nu,t}$ is an integrable $C_{\frak g} (\frak k_0)$-module.  We obtain a $\frak k$-module isomorphism
$$
\Gr \,\, \tilde M \cong \bigoplus\limits_{\nu, t} \text{ind}^{\frak k_{0}}_{\frak p_{0}}\left( V_{\frak m_{0}} (\nu) \, \boxtimes_{U(Z_{\kk_0})} \, X_{\nu,t}\right).
$$
For each $\nu$, let $G^{\cdot}_\nu$ be a resolution of $\text{ind}^{\frak k_{0}}_{\frak p_{0}} \, V_{\frak m_{0}} (\nu)$ by $\Gamma_{\frak k_{0},\frak m_{0}}$-acyclic $(\frak k_0,\frak m_0)$-modules.  We can compute $R^\cdot \, \Gamma_{\frak k_{0},\frak m_{0}} (\Gr \, \tilde M)$ as
$$
H^\cdot (\Gamma_{\frak k_{0},\frak m_{0}} (\bigoplus\limits_{\nu,t} G^{\cdot} _\nu \boxtimes_{U(Z_{\kk_0})} \, X_{\nu,t})),
$$
which is isomorphic as a $\frak k$-module to
$$
\bigoplus\limits_{\nu,t} H^\cdot( \Gamma_{\frak k_{0},\frak m_{0}} (G^{\cdot}_{\nu,t}))\, \boxtimes_{U(Z_{\kk_0})} \, X_{\nu,t}
$$
and hence to
\begin{equation}\label{eq5}
\bigoplus\limits_{\nu, t} R^\cdot \Gamma_{\frak k_{0},\frak m_{0}} (V_{\frak m_{0}} (\nu ))\boxtimes_{U(Z_{\kk_0})} X_{\nu,t}.
\end{equation}
Therefore $R^\cdot \, \Gamma_{\frak k_{0},\frak m_{0}}(\Gr \tilde M)$ is an integrable $\frak k$-module.  This proves the Lemma. \qed

To complete the proof of (a) note that, by \refprop{prop14}(c), $R^\cdot\Gamma_{\frak k_{0},\frak m_{0}}$ commutes with inductive limits.  Since furthermore, $\mathcal{C}_{\frak g}(\frak k_0)$ acts by $\frak k_0$-endomorphisms on $\tilde M$, $R^\cdot\Gamma_{\frak k_{0},\frak m_{0}} (\tilde M)$ has an increasing filtration of $\frak k_0 + \mathcal{C}_{\frak g}(\frak k_0)$-modules induced by the filtration on $\tilde M$. An obvious induction argument using the fact that $R^\cdot\Gamma_{\frak k_{0},\frak m_{0}} (\Gr\tilde M)$ is a $\frak k$-integrable module (\refle{le23}) implies that $R^\cdot\Gamma_{\frak k_{0},\frak m_{0}} (\tilde M)$ is filtered by $\frak k$-integrable modules, and hence is itself $\frak k$-integrable.  This proves (a).

\item[(b)]  Suppose $M(\frak p, E)$ is of finite type over $\frak k\cap\frak m =  {\frak m}_0+ C_{\frak g} ({\frak k}_0)$. We can rewrite \refeq{eq4} as
$$
\Gr \, \tilde M = \bigoplus\limits_\nu  (\text{ind}^{\frak k_{0}}_{\frak p_{0}} \, V_{\frak m_{0}} (\nu)) \, \boxtimes_{U(Z_{\kk_0})} \, Y_\nu
$$
with each $Y_\nu = \oplus_t X_{\nu, t}$ an integrable $C_{\frak g}(\frak k_0)$-module.  Since $\text{ind}^{\frak k_{0}}_{\frak p_{0}} \, V_{\frak m_{0}} (\nu)$ is a $(\frak k_0,\frak m_0)$-module, we conclude that every $Y_\nu$ is of finite type over $C_{\frak g}(\frak k_0)$.  Combining \refeq{eq5} with \refle{le23}, we obtain
\begin{equation}\label{eq6}
R^s\Gamma_{\frak k_{0},\frak m_{0}} (\Gr \tilde M) \cong \bigoplus\limits_\nu V_{\frak k_{0}} (\nu^{\vee})\boxtimes_{U(Z_{\kk_0})} Y_\nu .
\end{equation}
The right hand side of \refeq{eq6} is of finite type over $\frak k$ as each $Y_\nu$ is of finite type over $C_{\frak g} (\frak k_0)$ and $V_{\frak k_{0}} (\nu'\,^{\vee}) \not\cong V_{\frak k_{0}} (\nu'' \,^{\vee})$ for $\nu'\neq \nu''$.  Finally, the fact that $R^s\Gamma_{\frak k_{0},\frak m_{0}} (\Gr \tilde M)$ is of finite type over $\frak k$ implies that $R^s \Gamma_{\frak k_{0},\frak m_{0}} (M)$ is of finite type over $\frak k$.  Indeed, this follows from the observation, that since $R^s \Gamma_{\frak k_{0},\frak m_{0}}$ commutes with inductive limits,
\begin{equation}\label{eq7}
\Gr (R^s\Gamma_{\frak k_{0},\frak m_{0}} (\tilde M)) \cong R^s \Gamma_{\frak k_{0},\frak m_{0}} (\Gr \tilde M),
\end{equation}
where the left hand side of \refeq{eq7} refers to the filtration of $R^s \Gamma_{\frak k_{0},\frak m_{0}}(\tilde M)$ induced by the filtration on $\tilde M$. This proves (b).

\item[(c)]  The theory of the bottom layer map in the finite dimensional case is elaborated by Knapp and Vogan in \cite[Ch.\romanV, Sec.6]{KV}.  There the authors assume that they are working with a symmetric pair.  However, a careful examination of Theorem 5.80 in \cite{KV} reveals that the assumption that $\frak k_0$ is symmetric in $\frak g_n$ is not needed; hence our hypothesis on $E_n$ implies that $\Psi_{E_{n}}$ is an injection from $A(\frak p_0, E_n)$ to $A(\frak p_n, E_n)= R^s \Gamma_{\frak k_{0},\frak m_{0}} (\text{ind}^{\frak g_{n}}_{\frak p_{n}}\, E_n)$ for each $n$.  Furthermore, we have an injection of $\text{ind}^{\frak g_{n}}_{\frak p_{n}}\, E_n$ to $\text{ind}^{\frak g_{n+1}}_{\frak p_{n+1}}\, E_{n+1}$ which induces a $\frak g_n$-module homomorphism $\phi_n : A(\frak p_n, E_n)\to A(\frak p_{n+1}, E_{n+1})$.

On the other hand, we have a canonical $\kk_0$-module homomorphism $\chi_n :A(\frak p_0, E_n)\to A(\frak p_0, E_{n+1})$  induced by the inclusion of $E_n$ into $E_{n+1}$. Moreover, the diagram
\begin{equation}\label{eq8}
\xymatrix{
A(\frak p_0, E_{n+1}) \ar[r]^{\Psi_{E_{n+1}}}& A(\frak p_{n+1}, E_{n+1})  \\
A(\frak p_0, E_n) \ar[u]_{\chi_n}\ar[r]^{\Psi_{E_{n}}}&A(\frak p_n, E_n)\ar[u]_{\phi_n}}
\end{equation}
is commutative, and $\Psi_{E_{n}}$ and $\Psi_{E_{n+1}}$  are injections.  Consider the inductive limit homomorphism
$$
\varinjlim \Psi_{E_{n}} : \varinjlim A(\frak p_0, E_n)\to \varinjlim A(\frak p_n, E_n).
$$
By \refprop{prop14}(c) $\Psi_E = \varinjlim \Psi_{E_{n}}$ is an injection.

Assume now that for some $\nu$, $E''_\nu\neq 0$ and $\nu^{\vee}$ is dominant integral for $\k_0$.  For sufficiently large $n$, $E''_{n,\nu}:= \Hom_{{\frak m}_{0}}(V_{{\frak m}_{0}} (\nu), E_n)$ is always nonzero.  The fact that $\Hom_{\frak k_{0}} (V_{\frak k_{0}} (\nu^{\vee}), A(\frak p_n,E_n)) \cong \Hom_{\frak k_{0}} (V_{ \frak k_{0}}(\nu^{\vee}), A(\frak p_0, E_n))$ (\cite[Theorem 5.80]{KV}), together with the fact that $\Psi_E = \varinjlim \Psi_{E_{n}}$, implies
$$
\Hom_{\frak k_{0}}(V_{\frak k_{0}} (\nu^{\vee}), A(\frak p,E)) = E''_\nu
$$
as required. In particular, the bottom layer $\Im \Psi_E\subset A(\pp,E)$ is non-zero. Finally, to construct a simple bottom layer quotient of $A(\pp,E)$ it suffices to consider a simple quotient of a cyclic module $U(\gg)\cdot v$, where $v\in\Im\Psi_E$. This proves (c).

%By our assumption, for some $n$, the $Z_{U(\frak g_{n})}$-character of
%$A(\frak p_n, E_n)$ is not regular integral.  Fix a non-zero vector
%$v\in A(\frak p_0, E)$.  Let $\tilde v = \Psi_E (v)$, let $A_v$ be
%the $\frak g$-submodule of $A(\frak p,E)$ generated by $\tilde v$,
%and $A'_v$ be a  simple quotient of $A_v$.  We claim that $A'_v$ is
%not $\frak g$-integrable.  To see this, let $A'_{v,n}$ be the image
%of the $\frak g_n$-submodule of $A(\frak p, E)$ generated by $\tilde
%v$.  Since $A'_{v,n}$ is isomorphic to a quotient of $A(\frak p_n,
%E_n), A'_{v,n}$ is not $\frak g_n$-integrable.  Hence $A'_v$ is not
%$\frak g_n$-integrable and thus also not $\frak g$-integrable.

For the proof of (d) we need the following lemma.

\begin{lemma}\label{le24}  Suppose $F$ is an integrable $\frak m_0$-module.  Extend $F$ to a $\frak p_0$-module so that $\frak n_0\cdot
F = 0$.  Then if $i < s$, $R^i \Gamma_{\frak k_{0},\frak m_{0}}(\ind^{\frak k_{0}}_{\frak p_{0}} F) = 0$.
\end{lemma}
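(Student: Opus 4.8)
The plan is to reduce \refle{le24} to the Bott--Borel--Weil computation already invoked for \refle{le21}, after stripping off multiplicity spaces. Since $F$ is an integrable, hence $\frak m_0$-semisimple, module, I would first decompose it as $F\cong\bigoplus_\nu V_{\frak m_0}(\nu)\otimes M_\nu$, where $\nu$ ranges over the $\bb_{\mm_0}$-dominant integral weights of $\frak m_0$ and $M_\nu:=\Hom_{\frak m_0}(V_{\frak m_0}(\nu),F)$ is the corresponding multiplicity space, viewed as a trivial $\frak m_0$-module. As $\frak n_0$ annihilates $F$, this is simultaneously a decomposition of $\frak p_0$-modules; and since $\ind^{\frak k_0}_{\frak p_0}=U(\frak k_0)\otimes_{U(\frak p_0)}(-)$ commutes with direct sums, we obtain $\ind^{\frak k_0}_{\frak p_0}F\cong\bigoplus_\nu\bigl(\ind^{\frak k_0}_{\frak p_0}V_{\frak m_0}(\nu)\bigr)\otimes M_\nu$ as $\frak k_0$-modules.

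Next I would observe, exactly as in the proof of \refle{le23}, that $R^i\Gamma_{\frak k_0,\frak m_0}$ commutes with direct sums of $(\frak k_0,\frak m_0)$-modules: by \refprop{prop14}(b) it is given by the relative cohomology $H^i(\frak k_0,\frak m_0;(-)\otimes U^0(\frak k_0))$, and the standard relative cochain complex $\Hom_{\frak m_0}(\Lambda^\cdot(\frak k_0/\frak m_0),(-)\otimes U^0(\frak k_0))$ commutes with direct sums because $\frak k_0/\frak m_0$ is finite-dimensional, while tensoring with the trivial $\frak k_0$-module $M_\nu$ is exact. Combining this with the previous step yields $R^i\Gamma_{\frak k_0,\frak m_0}\bigl(\ind^{\frak k_0}_{\frak p_0}F\bigr)\cong\bigoplus_\nu R^i\Gamma_{\frak k_0,\frak m_0}\bigl(\ind^{\frak k_0}_{\frak p_0}V_{\frak m_0}(\nu)\bigr)\otimes M_\nu$, so it suffices to establish the vanishing when $F$ is a single simple module $V_{\frak m_0}(\nu)$.

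For such $F$ the assertion is precisely the vanishing part of the Bott--Borel--Weil theorem of \cite[Proposition 6.3]{EW}, the same input used to prove \refle{le21}: the graded $\frak k_0$-module $R^\cdot\Gamma_{\frak k_0,\frak m_0}\bigl(\ind^{\frak k_0}_{\frak p_0}V_{\frak m_0}(\nu)\bigr)$ is concentrated in the single cohomological degree $s=\half\dim(\frak k_0/\frak m_0)$, where it is isomorphic to $V_{\frak k_0}(\nu^{\vee})$ (with the convention that this is $0$ when $\nu^{\vee}$ is not $\bb_0$-dominant integral). In particular it vanishes in every degree $i<s$; reassembling the direct sum completes the proof.

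The only genuine point requiring care is the identification of the degree in which this cohomology lives: one must check that for the $\frak t_0$-compatible parabolic $\frak p_0=\frak k_0\cap\frak p$ of the reductive algebra $\frak k_0$, the Bott--Borel--Weil degree equals $s=\half\dim(\frak k_0/\frak m_0)$. This holds because the $\frak m_0$-stable decomposition $\frak k_0=\bar{\frak n}_0\oplus\frak m_0\oplus\frak n_0$, together with the non-degeneracy of the Killing form of $\frak k_0$, forces $\dim\bar{\frak n}_0=\dim\frak n_0$, so that $s=\dim\frak n_0$ is exactly the relevant (``middle'') degree. No new estimate is needed beyond this bookkeeping, so I do not anticipate a serious obstacle.
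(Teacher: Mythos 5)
Your reduction to the case $F=V_{\frak m_0}(\nu)$ is unobjectionable (the $\frak m_0$-isotypic decomposition is a $\frak p_0$-module decomposition because $\frak n_0$ kills $F$, and both $\ind^{\frak k_0}_{\frak p_0}$ and $R^i\Gamma_{\frak k_0,\frak m_0}$ commute with direct sums), but the step that is supposed to do all the work is false as stated. For the \emph{induced} module $\ind^{\frak k_0}_{\frak p_0}V_{\frak m_0}(\nu)$ the Bott--Borel--Weil cohomology is not concentrated in degree $s$; when nonzero it sits in a single degree that depends on which Weyl chamber $\nu+\rho_{\frak b_0}$ lies in and ranges over $[s,2s]$, equalling $s$ only when $\nu^{\vee}$ is dominant regular. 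Already for $\frak k_0=\sl(2)$, $\frak p_0=\frak b_0$, $\frak m_0=\frak t_0$ (so $s=1$) and $\nu$ dominant, one has $R^1\Gamma_{\frak k_0,\frak m_0}(\ind^{\frak k_0}_{\frak b_0}\CC_\nu)=0$ but $R^2\Gamma_{\frak k_0,\frak m_0}(\ind^{\frak k_0}_{\frak b_0}\CC_\nu)\cong V_{\frak k_0}(\nu)\neq 0$, while $\nu^{\vee}=-\nu-2$ is not dominant. \refle{le21} and \cite[Proposition 6.3]{EW}, as used in the paper, identify only the degree-$s$ cohomology (which is why $A(\frak p_0,E)$ is defined with $R^s$ alone); they say nothing about the remaining degrees, and the vanishing below $s$ is precisely the content that \refle{le24} is introduced to supply. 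So at the decisive point your argument either quotes a stronger, correctly degree-indexed BBW theorem incorrectly, or is circular.

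The conclusion you want is nevertheless true, and the paper obtains it by a route that avoids any case analysis on $\nu$: by \refprop{prop14}(b) and the semisimplicity of $U^0(\frak k_0)$ it suffices to prove $H^i(\frak k_0,\frak m_0, V\otimes \ind^{\frak k_0}_{\frak p_0}F)=0$ for $i<s$ and $V$ simple finite dimensional; Poincar\'e duality for relative Lie algebra cohomology converts this to the vanishing of $H_{2s-i}(\frak k_0,\frak m_0, V\otimes\ind^{\frak k_0}_{\frak p_0}F)$, the tensor identity rewrites the coefficients as $\ind^{\frak k_0}_{\frak p_0}(V\otimes F)$, Shapiro's lemma identifies the homology with $H_{2s-i}(\frak p_0,\frak m_0, V\otimes F)$, and the latter vanishes because $2s-i>s=\dim(\frak p_0/\frak m_0)$. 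Your closing observation that $\dim\bar{\frak n}_0=\dim\frak n_0=s$ is correct and is exactly the dimension count this argument needs, but it must be fed into the duality/Shapiro mechanism rather than into a misremembered concentration statement.
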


\textbf{Proof of \refle{le24}.} According to \refprop{prop14}(b) we need to show that
$$
H^i(\frak k_0,\frak m_0, (\text{ind}^{\frak k_{0}}_{\frak p_{0}} F)\otimes U^0(\frak k_0)) = 0
$$
for $i < s$.  Since $U^0(\kk_0)$ is a semisimple integrable $\kk_0$-module, it is enough to show that  $H^i (\frak k_0, \frak m_0 , V\otimes \text{ind}^{\frak k_{0}}_{\frak p_{0}} F) = 0$ for $i < s$ and for any simple finite-dimensional $\kk_0$-module $V$.  By Poincar\'e duality for relative Lie algebra cohomology we must show that
$$
H_{2s-i} (\frak k_0,\frak m_0, V \otimes \text{ind}^{\frak k_{0}} _{\frak p_{0}} F) = 0
$$
for $i < s$. It is well known that
$$
V\otimes \text{ind}^{\frak k_{0}} _{\frak p_{0}} F \cong \text{ind}^{\frak k_{0}} _{\frak p_{0}}(V\otimes F).
$$
So we must show that
$$
H_{2s-i} (\frak k_0, \frak m_0, \text{ind}^{\frak k_{0}} _{\frak p_{0}}(V\otimes F)) = 0
$$
for $i < s$.  But Shapiro's Lemma implies that the above homology is isomorphic to $H_{2s-i} (\frak p_0, \frak m_0, V\otimes F)$, and the latter vanishes for  $i < s$ because $\dim (\frak p_0 / \frak m_0) = s$.  The Lemma follows.  \qed

\item[(d)] Consider the short exact sequence
$$
0\to\text{ind}^{\frak k_{0}} _{\frak p_{0}}  E_n \to\text{ind}^{\frak k_{0}} _{\frak p_{0}} E_{n+1} \to\text{ind}^{\frak k_{0}} _{\frak p_{0}} (E_{n+1}/E_n) \to 0.
$$
It yields a long exact sequence for $R^\cdot \Gamma_{\frak k_{0},\frak m_{0}}$.  \refle{le24} implies that each $\chi_n$ is an injection.  Therefore, by the commutativity of diagram \refeq{eq8}, $\phi_n\circ \Psi_{E_{n}}$ is an injection for each $n$, and hence the maps $\phi_n\circ\Psi_{E_{n}}$ induce an injection
$$
i_n : A(\frak p_0, E_n) \to A(\frak p, E)
$$
for each $n$.

Fix a value of $N$ so that $A(\frak p_0, E_N)\neq 0$, and so that the $Z_{U(\frak g_{N})}$-character of $\text{ind}^{\frak g_{N}} _{\frak p_{N}} E_N$ is not regular integral.  Fix a nonzero vector $v\in A(\frak p_0, E_N)$, let $A_v$ be the $\frak g$-submodule generated by $\tilde v  := \Psi_E (i_n(v))$ (note that $\tilde v\neq 0$), and let $A'_v$ be a simple quotient of $A_v$.  We claim that $A'_v$ is not $\frak g$-integrable.  To see this consider the image $A'_{v,N}$ in $A'_v$ of the $\frak g_N$-submodule $U(\frak g_N)\cdot\tilde v \subset A(\frak p,E)$.  The commutativity of the diagram
$$\xymatrix{
A(\frak p_0, E) \ar[r]^{\Psi_E}& A(\frak p,E) \\
A(\frak p_0, E_N) \ar[u]^{i_N}\ar[r]^{\Psi_{E_{N}}} &A(\frak p_N, E_N)\ar[u] }
$$
implies that $A'_{v,N}$ is isomorphic to a subquotient of $A(\frak p_N,E_n)$.  Since  $Z_{U(\frak g_{N})}$ acts by one and the same character on $\text{ind}^{\frak g_{N}}_{\frak p_{N}} E_N$ and on $A(\frak p_N, E_N), A'_{v,N}$ is a $\frak g_N$-module with a central character which is not regular integral, and is thus not an integrable $\frak g_N$-module.  This implies that $A'_v$ itself is not an integrable $\frak g$-module.

\item[(e)] Note that, under our assumptions, $\frak m_0 = \frak t_0$.  As $\frak t_0\subset Z_\m$, $\t_0$ acts via weight $\mu$ on $E$, and moreover, $E = \CC_\mu \boxtimes_{U(Z_{\frak k_{0}})} E''$ where $\CC_\mu$ is the $1$-dimensional $\frak t_0$-module corresponding to $\mu$.  \refle{le21} yields now (3), and (c) implies that $\Psi_E$ is an isomorphism between $A(\frak p_0,E)$ and $V_{\frak k_{0}} (\mu^{\vee})\otimes\Hom_{\frak k_{0}} (V_{\frak k_{0}} (\mu^{\vee}), A(\frak p,E))$.

\item[(f)] Assume in addition that $\text{im} \Psi_E$ is a simple $\frak k$-module.  Let $A^{\#}$ denote the $\frak g$-submodule of $A(\frak p,E)$ generated by $\text{im} \Psi_E$, and let $A^{\$}$ be the sum of all $\frak g$-submodules $X$ of $A^{\#}$ with $\Hom_{\frak k_{0}} (V_{\frak k_{0}} (\mu^{\vee}), X) = 0$.  Then (e) together with the $\frak k_0$-semisimplicity of $A(\frak p,E)$ imply that $A^{\$} $ is a maximal proper $\frak g$-submodule of $A^{\#}$, and hence $A^{\#}/A^{\$} $ is the unique bottom layer subquotient of $A(\frak p,E)$.

Finally, the inclusion $\frak m\subset \frak k$ yields $\frak m = C_{\frak g} (\frak t_0)\subset \frak k_0+ C_{\frak g} (\frak k_0)$ which implies that  $\frak m = \frak t_0 + C_{\frak g} (\frak k_0)$.  As $\frak t_0$ is abelian, $E''$ is a simple $C_{\frak g} (\frak k_0)$-module, and the isomorphism \refeq{eq03} of (e) implies that $A(\frak p_0, E)$ is a simple $\frak k$-module.  Therefore (by (c)) $\text{im} \Psi_E$ is isomorphic to $A(\frak p_0, E)$, and is thus a simple $\frak k$-module.  \qed

In the spirit of \cite{PSZ} we call a locally reductive subalgebra $\frak{l}\subset\frak g$ of a locally reductive Lie algebra $\frak g$ {\it primal}, if there exists a simple strict $(\frak g,\frak l)$-module $M$ such that $\frak l$ is a maximal locally reductive subalgebra of $\frak g [M]$.  Using \refth{th22}, one can prove that certain subalgebras $\frak{l}$ are primal, for instance a subalgebra $\frak k = \frak k_0 + C_{\frak g} (\frak k_0)$ is primal whenever there exists an $\frak m$-module $E$ satisfying the assumption of \refth{th22}(d). Below we show the primality of $\frak k$ in some special cases.

\section{The case $\frak g = \gl (p\Theta)$}
To illustrate our main result in the specific case of $\frak g = \gl (p\Theta)$, fix the exhaustion $\frak g = \cup_n \gl (p\theta_1\ldots\theta_{n-1})$ as in Subsection \ref{sec23}.  Let $\frak k_0\subset\frak g_1 = \gl (p)$ be any reductive in $\frak g_1$ subalgebra which contains a $\frak g_1$-regular element $h$, and such that the $p$-dimensional natural $\gl (p)$-module $\CC^p$ is simple as a $\frak k_0$-module.  For instance, $\frak k_0$ may equal $\gl (p)$, $s\ell (p)$ or a principal $s\ell (2)$-subalgebra of $s\ell (p)$. Let $\frak t_0 := {C}_{\frak k_{0}} (h)$. We define $\frak p$ as the $\frak t_0$-compatible parabolic subalgebra $\bigoplus\limits_{\sigma ,\text{Re} \sigma\geq 0} \frak g^\sigma_h$.

\begin{lemma}\label{le31}
\item[(a)] $\frak m\cap\frak g_n\simeq \gl(\theta_1\ldots\theta_{n-1})^p$.

\item[(b)] $C_{\frak g_{n}} (\frak k_0)\simeq \gl (\theta_1\ldots\theta_{n-1})$ is the diagonal subalgebra in $\gl (\theta_1\ldots\theta_{n-1})^p$.
\end{lemma}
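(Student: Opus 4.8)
The plan is to compute directly, using the explicit diagonal embeddings defining $\gl(p\Theta)$. Recall that $\gg_n = \gl(p\theta_1\cdots\theta_{n-1})$ and that $\frak k_0\subset\gl(p)=\gg_1$ is reductive in $\gg_1$, contains a $\gg_1$-regular element $h$, and acts irreducibly on $\CC^p$. First I would identify the centralizer $C_{\gg_n}(\frak k_0)$. Since $\CC^p$ is a simple $\frak k_0$-module, Schur's lemma gives $C_{\gl(p)}(\frak k_0)=\CC\cdot\id$. Now view the natural $\gg_n$-module $\CC^{p\theta_1\cdots\theta_{n-1}}$; by the very definition of the embedding $\gl(p\theta_1\cdots\theta_{n-2})\hookrightarrow\gl(p\theta_1\cdots\theta_{n-1})$ (repeat a matrix $\theta_{n-1}$ times along the diagonal), we have a $\frak k_0$-module isomorphism $\CC^{p\theta_1\cdots\theta_{n-1}}\simeq \CC^p\otimes\CC^{\theta_1\cdots\theta_{n-1}}$, where $\frak k_0$ acts only on the first factor and $\CC^{\theta_1\cdots\theta_{n-1}}$ is a trivial $\frak k_0$-module (call it $W_n$). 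Then $\gg_n = \gl(\CC^p\otimes W_n)\simeq \gl(p)\otimes\gl(W_n)$ as $\frak k_0$-modules (with $\frak k_0$ acting through the first tensor factor), and the centralizer of $\frak k_0$ is $C_{\gl(p)}(\frak k_0)\otimes\gl(W_n) = \CC\id\otimes\gl(W_n)\simeq\gl(\theta_1\cdots\theta_{n-1})$. This proves (b) once one checks that this copy sits diagonally, which I address below.

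For part (a): $\frak m\cap\gg_n$ is the reductive part $\frak m_n$ of the $\frak t_0$-compatible parabolic $\pp\cap\gg_n = \bigoplus_{\Re\sigma\geq 0}(\gg_n)^\sigma_h$, so $\frak m_n = C_{\gg_n}(h)$. Using the decomposition $\gg_n\simeq\gl(p)\otimes\gl(W_n)$ above, with $h$ acting as $h\otimes\id_{W_n}$: since $h$ is $\gl(p)$-regular, $C_{\gl(p)}(h)$ is a Cartan subalgebra $\frak t_0'\simeq\CC^p$ of $\gl(p)$ (in fact $\frak t_0' = \frak t_0$, as $\frak t_0:=C_{\frak k_0}(h)$ and one checks $C_{\gl(p)}(h)\subset\frak k_0$ is forced — actually I would just note $\dim C_{\gl(p)}(h)=p$ and $\frak t_0\subseteq C_{\gl(p)}(h)$, so I need the reverse inclusion; this follows since $h$ regular in $\gg_1$ means $C_{\gg_1}(h)$ is a Cartan subalgebra, hence abelian, and $\frak t_0$ is a Cartan subalgebra of $\frak k_0$ containing the regular-in-$\gg_1$ element $h$). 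Then $\frak m_n = C_{\gg_n}(h\otimes\id) = C_{\gl(p)}(h)\otimes\gl(W_n)\simeq \CC^p\otimes\gl(W_n)\simeq\gl(\theta_1\cdots\theta_{n-1})^p$, giving (a).

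For the diagonality claim in (b): write $\gl(p)=\bigoplus_{i,j=1}^p \CC E_{ij}$ in the basis diagonalizing $h$; then $\gg_n\simeq\gl(p)\otimes\gl(W_n)$ decomposes as $\bigoplus_{i,j} E_{ij}\otimes\gl(W_n)$, and $\frak m_n=\bigoplus_i E_{ii}\otimes\gl(W_n)$ is visibly a direct sum of $p$ copies of $\gl(W_n)$. The centralizer $C_{\gg_n}(\frak k_0)=\CC\id\otimes\gl(W_n) = \bigl(\sum_i E_{ii}\bigr)\otimes\gl(W_n)$ maps into $\frak m_n\simeq\gl(W_n)^p$ by $x\mapsto(x,x,\ldots,x)$, which is exactly the diagonal embedding. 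The main obstacle is simply keeping the tensor-factor bookkeeping consistent across the chain — i.e. verifying that the isomorphisms $\CC^{p\theta_1\cdots\theta_{n-1}}\simeq\CC^p\otimes W_n$ are compatible with the inclusions $\gg_n\subset\gg_{n+1}$ so that the identifications in (a) and (b) are the ones induced by the fixed exhaustion; this is routine but must be stated carefully, and I would phrase it as $W_{n+1}\simeq W_n\otimes\CC^{\theta_n}$ with the diagonal $\theta_n$-fold embedding $\gl(W_n)\hookrightarrow\gl(W_n\otimes\CC^{\theta_n})$, matching the description of $C_{\gg_n}(\frak k_0)\subset C_{\gg_{n+1}}(\frak k_0)$ in Subsection \ref{sec23}.
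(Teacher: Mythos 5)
Your argument is correct and is essentially the paper's: both proofs rest on decomposing the natural $\gg_n$-module $V_n$ as $\CC^p\otimes\CC^{\theta_1\cdots\theta_{n-1}}$ --- into $p$ eigenspaces of $h$, each of dimension $\theta_1\cdots\theta_{n-1}$, for (a), and into $\theta_1\cdots\theta_{n-1}$ copies of the simple $\kk_0$-module $\CC^p$ for (b) --- the paper merely stating these two decompositions and leaving the centralizer computations (which you carry out explicitly, including the diagonality check) to the reader. One caveat: your parenthetical claim that $\frak t_0=C_{\gl(p)}(h)$ is false in general (for $\kk_0$ a principal $\sl(2)$ in $\sl(p)$, $\frak t_0=C_{\kk_0}(h)$ is one-dimensional while $C_{\gl(p)}(h)$ is $p$-dimensional, and your argument only establishes the inclusion $\frak t_0\subseteq C_{\gl(p)}(h)$), but this aside is never used --- part (a) needs only $\frak m\cap\gg_n=C_{\gg_n}(h)$, which holds by the definition of $\pp$.
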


\begin{proof} As an $C_{\gg_n}(h)$-module, the natural representation
$V_n$ of $\gl (p\theta_1\ldots\theta_{n-1})$ decomposes as a direct sum of $p$ isotypic components each of dimension $\theta_1\ldots\theta_{n-1}$.  This yields (a).

As a $\frak k_0$-module $V_n$ decomposes as a direct sum of $\theta_1\ldots\theta_{n-1}$ copies of the simple $\frak k_0$-module $\CC^p$.  This implies (b).
\end{proof}

\begin{corollary}\label{cor32}
\item[(a)] $\frak m = {C}_{\frak g}(\frak t_0) = \gl(\Theta)^p$;
\item[(b)] $\frak k\simeq\frak k_0 + \gl (\Theta)$, $\kk_0\cap\gl(\Theta)\subset Z_{\gl(\Theta)}$;
\item[(c)] if $\frak k_0 = \gl (p)$, then $\frak k \simeq \gl (p)+\gl(\Theta)$ is a maximal proper subalgebra of $\gl (p\Theta )$.
\end{corollary}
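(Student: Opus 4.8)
The plan is to obtain (a) and (b) by passing to the direct limit in \refle{le31}, and to reduce the maximality assertion in (c) to a finite-dimensional statement about the pairs $\frak k_n\subset\frak g_n$.

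For (a) and (b) I would first note that $\frak m_n$ may be taken to be $C_{\frak g_{n}}(h)=(\frak g_n)^0_h$ (the $\frak t_0$-stable reductive part of $\frak p_n$), and in fact $C_{\frak g_{n}}(h)=C_{\frak g_{n}}(\frak t_0)$: since $h$ is $\frak g_1$-regular, $\frak t_0=C_{\frak k_{0}}(h)$ is the Cartan subalgebra of $\gl(p)$ containing $h$, so on each natural $\frak g_n$-module $V_n$ the $\frak t_0$-weight spaces coincide with the $h$-eigenspaces, and an element of $\frak g_n=\gl(V_n)$ commutes with $\frak t_0$ iff it preserves each of these spaces iff it commutes with $h$. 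Hence $\frak m=C_{\frak g}(\frak t_0)$. As $V_{n+1}\simeq V_n^{\oplus\theta_n}$ as a $\frak t_0$-module, each of the $p$ weight spaces of $\frak t_0$ in $V_n$ grows by the factor $\theta_n$ in $V_{n+1}$, so each of the $p$ factors $\gl(\theta_1\cdots\theta_{n-1})$ of $\frak m_n$ (\refle{le31}(a)) embeds $\theta_n$-diagonally in the corresponding factor of $\frak m_{n+1}$; taking the union yields $\frak m\simeq\gl(\Theta)^p$, i.e.\ (a). For (b), $C_{\frak g}(\frak k_0)=\cup_n C_{\frak g_{n}}(\frak k_0)$ since $\frak k_0$ is finite-dimensional, so \refle{le31}(b) gives $C_{\frak g}(\frak k_0)\simeq\gl(\Theta)$ and $\frak k=\frak k_0+C_{\frak g}(\frak k_0)\simeq\frak k_0+\gl(\Theta)$; moreover $\frak k_0\cap\gl(\Theta)=\frak k_0\cap C_{\frak g}(\frak k_0)=Z_{\frak k_{0}}$, and since $\CC^p$ is a simple $\frak k_0$-module Schur's lemma forces $Z_{\frak k_{0}}$ to act by scalars on $\CC^p$, hence on every $V_n$, hence $Z_{\frak k_{0}}\subset Z_{\gl(\theta_1\cdots\theta_{n-1})}$ for all $n$ and therefore $Z_{\frak k_{0}}\subset Z_{\gl(\Theta)}$.

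For (c) I would assume $\frak k_0=\gl(p)$ with $p\geq 2$ (for $p=1$ one has $C_{\frak g}(\frak k_0)=\frak g$ and $\frak k=\frak g$, so the statement is vacuous). Note $\frak k\subsetneq\frak g$ because $\dim\frak k_n=p^2+(\theta_1\cdots\theta_{n-1})^2-1<p^2(\theta_1\cdots\theta_{n-1})^2=\dim\frak g_n$. Now, given any subalgebra $\frak l$ with $\frak k\subsetneq\frak l\subseteq\frak g$, pick $X\in\frak l\setminus\frak k$; then $X\in\frak g_{N_0}$ for some $N_0$, and since $\frak l\cap\frak g_n\supseteq\frak k_n$ while $X\in(\frak l\cap\frak g_n)\setminus\frak k_n$ for all $n\geq N_0$, we get $\frak l\cap\frak g_n\supsetneq\frak k_n$ for all $n\geq N_0$. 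It therefore suffices to show that $\frak k_n$ is a \emph{maximal} subalgebra of $\frak g_n$ for all sufficiently large $n$: then $\frak l\cap\frak g_n=\frak g_n$ for $n$ large, and as $\frak g$ is exhausted by the tail of the chain $(\frak g_n)$, $\frak l=\frak g$.

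The heart of the matter, which I expect to be the main obstacle, is that finite-dimensional maximality. Identifying $\frak g_n=\gl(p\theta_1\cdots\theta_{n-1})$ with $\gl(\CC^p\otimes\CC^{\theta_1\cdots\theta_{n-1}})$, the diagonal embedding carries $\frak k_0=\gl(p)$ onto $\gl(p)\otimes\id$, and by Schur's lemma (as $\CC^p$ is $\frak k_0$-simple) $C_{\frak g_{n}}(\frak k_0)=\id\otimes\gl(\theta_1\cdots\theta_{n-1})$; hence $\frak k_n=\gl(p)\otimes\id+\id\otimes\gl(\theta_1\cdots\theta_{n-1})$ is precisely the ``tensor product'' subalgebra of $\gl(p\theta_1\cdots\theta_{n-1})$. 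By Dynkin's classification of the maximal subalgebras of the classical Lie algebras, $\gl(a)\otimes\id+\id\otimes\gl(b)$ is a maximal subalgebra of $\gl(ab)$ whenever $a,b\geq 2$ and $ab$ is large enough to avoid the low-rank coincidences (e.g.\ $\sl(2)\oplus\sl(2)\subset\so(4)$); since $p\geq 2$ is fixed and $\theta_1\cdots\theta_{n-1}\to\infty$, this applies to $\frak k_n\subset\frak g_n$ for all large $n$, completing the proof. It is essential here that $\frak k_0=\gl(p)$ rather than a proper subalgebra such as $\sl(p)$: otherwise $\frak k_n$ is properly contained in the tensor product subalgebra and is not maximal.
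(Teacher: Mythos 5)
Your proposal is correct. The paper states this Corollary with no proof at all, treating it as immediate from \refle{le31}, and your parts (a) and (b) are exactly the intended content: pass to the union over $n$, observing that the isotypic decompositions of $V_{n+1}\simeq V_n^{\oplus\theta_n}$ force the $\theta_n$-diagonal embeddings of the pieces, and identify $\frak k_0\cap C_{\frak g}(\frak k_0)=Z_{\frak k_0}$ via Schur's lemma. Part (c) is the only place where a genuine argument is needed and the paper supplies none; your reduction (any strictly larger subalgebra $\frak l$ meets $\frak g_n$ in a subalgebra strictly containing $\frak k_n$ for all large $n$, so it suffices that $\frak k_n=\gl(p)\otimes\id+\id\otimes\gl(\theta_1\cdots\theta_{n-1})$ be maximal in $\gl(p\theta_1\cdots\theta_{n-1})$) is the natural one, and the appeal to Dynkin's theorem on tensor-product subalgebras is the standard way to finish. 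Your phrasing of the Dynkin input is slightly loose --- the precise statement is that $\sl(a)\oplus\sl(b)$ acting on $\CC^a\otimes\CC^b$ is maximal in $\sl(ab)$ unless both factors preserve bilinear forms, which here can only happen for $a=b=2$ and is excluded anyway since $\theta_1\cdots\theta_{n-1}\to\infty$ --- but this does not affect the argument. Your closing remark that maximality genuinely requires $\frak k_0=\gl(p)$ rather than, say, $\sl(p)$ is a correct and worthwhile observation.
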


We now construct a class of simple $\gl (\Theta)$-modules. Let $V_n$ denote the natural representation of $\gl(\theta_1\dots\theta_{n-1})$. Fix $n_0>1$ and let $V(\lambda_{n_{0}})$ be the simple finite dimensional $\gl (\theta_1\ldots\theta_{n_{0}-1})$-module with highest weight $\lambda_{n_{0}} = (\lambda^1,\ldots,\lambda^{\theta_{1}\ldots\theta_{n_{0}-1}}), \lambda^i\geq\lambda^{i+1}$.  Define $n' = n'(\lambda_{n_{0}-1})$ as the largest index for which the entry $\lambda^{n'}$ is non-negative; if $\lambda^1 < 0$, we put $n' = 0$. To $\lambda_{n_{0}}$ we assign the following highest weight of $\gl (\theta_1\ldots\theta_{n_{0}})$:
$$
\lambda_{n_{0}+1} := (\lambda^1,\ldots ,\lambda^{n'},\underbrace{0,0,\ldots,0,}_{\theta_1\ldots\theta_{n_{0}}(\theta_{n_{0}+1}-1)\text{times}} \lambda^{n'+1},\ldots,\lambda^{\theta_{1}\ldots\theta_{n_{0}}}).
$$

\begin{lemma}\label{le33} There is a natural injection of $\gl(\theta_1\ldots\theta_{n_{0}-1})^{\theta_{n_{0}}}$-modules
$$
V(\lambda_{n_{0}})^{\theta_{n_{0}}} \to  V(\lambda_{n_{0}+1}),
$$
and hence a diagonal injection of $\gl (\theta_1\ldots\theta_{n-1})$-modules
$$
V(\lambda_n)\to V(\lambda_{n+1})
$$
for any $n>n_0$.
\end{lemma}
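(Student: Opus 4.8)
The plan is to build, by hand, a suitable $\gl(\theta_1\cdots\theta_{n_0-1})^{\theta_{n_0}}$-submodule of $V(\lambda_{n_0+1})$ and then restrict it to the diagonal. Write $N:=\theta_1\cdots\theta_{n_0-1}$ and $\theta:=\theta_{n_0}$, and view $\gl(N)^{\theta}\subset\gl(N\theta)$ as the $\theta$ diagonal blocks of size $N$; thus $\gl(N)_j$ is the $j$-th block and the diagonal $\gl(N)\subset\gl(N)^{\theta}$ is precisely the $\theta$-fold diagonal embedding $\gl(N)\subset\gl(N\theta)$ of Subsection \ref{sec23}. Split $\lambda_{n_0}$ into its non-negative part, a partition $\mu$ with $\ell(\mu)\le n'$, and its negative part, encoded by a partition $\nu$ with $\ell(\nu)=N-n'$; so $\ell(\mu)+\ell(\nu)\le N$, and $\lambda_{n_0+1}$ is obtained from $\lambda_{n_0}$ by inserting $N(\theta-1)\ge N$ further zeros between $\mu$ and the negative block, i.e. it has the same \emph{mixed type} $(\mu;\nu)$. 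We may assume $\lambda_{n_0}\neq 0$, there being nothing to prove otherwise.

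First I would realize $V_{\gl(N\theta)}(\lambda_{n_0+1})$ as the \emph{Cartan component} (the multiplicity-one constituent of highest weight $\lambda_{n_0+1}$) of $V_{\gl(N\theta)}(\mu)\otimes V_{\gl(N\theta)}(\nu)^*$, concretely as the irreducible submodule generated by the highest weight vector inside $\mathbb{S}_\mu(\CC^{N\theta})\otimes\mathbb{S}_\nu\bigl((\CC^{N\theta})^*\bigr)$, where $\mathbb S_\mu$ is the Schur functor. Since $\ell(\mu),\ell(\nu)\le N$, both the highest weight vector $u_\mu$ of $\mathbb S_\mu(\CC^{N\theta})$ (built from $e_1,\dots,e_{\ell(\mu)}$) and the highest weight vector $\tilde u_\nu$ of the $\gl(N)_1$-submodule $\mathbb S_\nu\bigl((\CC^{N})^*\bigr)\subset\mathbb S_\nu\bigl((\CC^{N\theta})^*\bigr)$ (built from the block-$1$ dual vectors $e^*_{N-\ell(\nu)+1},\dots,e^*_N$) involve coordinates of the first block only. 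Put $v:=u_\mu\otimes\tilde u_\nu$. Then: (i) every $\gl(N)_j$ with $j\ge 2$ kills each basis vector occurring in $v$, so $\gl(N)_j v=0$; (ii) $u_\mu$ and $\tilde u_\nu$ are annihilated by the upper-triangular nilradical of $\gl(N)_1$, and their $\gl(N)_1$-weights are the highest weights of $V_{\gl(N)}(\mu)$ and of $V_{\gl(N)}(\nu)^*$, whose sum is $\lambda_{n_0}$, so $v$ is a $\gl(N)_1$-highest weight vector of weight $\lambda_{n_0}$; (iii) the $\gl(N\theta)$-weight of $v$ is $(\lambda_{n_0},0,\dots,0)$, a Weyl-conjugate of $\lambda_{n_0+1}$ whose sum of positive coordinates equals $|\mu|$, and since every weight of any constituent $V_{\gl(N\theta)}(\sigma;\tau)$ of the tensor product has sum of positive coordinates at most $|\sigma|$, with $|\sigma|=|\mu|$ only for the Cartan component, this weight occurs with multiplicity one and only in $V_{\gl(N\theta)}(\lambda_{n_0+1})$, so $v\in V_{\gl(N\theta)}(\lambda_{n_0+1})$. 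By (i)--(iii), $U(\gl(N)^\theta)v=U(\gl(N)_1)v$ is the submodule generated by a highest weight vector in a semisimple $\gl(N)_1$-module, hence $\cong V_{\gl(N)}(\lambda_{n_0})\boxtimes\CC\boxtimes\cdots\boxtimes\CC$.

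To complete the first assertion I would apply the block permutations: a permutation $\pi$ of the $\theta$ blocks lies in $GL(N\theta)$, hence acts on $V(\lambda_{n_0+1})$ and carries $\gl(N)_j$ to $\gl(N)_{\pi(j)}$, so $\pi v$ generates a submodule isomorphic to $V_{\gl(N)}(\lambda_{n_0})$ placed on the $\pi(1)$-st factor and trivial on the rest; since $\lambda_{n_0}\neq 0$ the resulting $\theta$ submodules have pairwise distinct $\gl(N)^\theta$-isotypes, so they sum directly to a copy of $V(\lambda_{n_0})^{\theta_{n_0}}$ with its natural $\gl(N)^{\theta_{n_0}}$-structure, giving the claimed injection. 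The second assertion follows: running the same construction with $n_0$ replaced by any $n\ge n_0$ (the recursion defining $\lambda_{n+1}$ and the inequality $\ell(\mu_n)+\ell(\nu_n)\le\theta_1\cdots\theta_{n-1}$ persist unchanged) yields a $\gl(\theta_1\cdots\theta_{n-1})^{\theta_n}$-injection $V(\lambda_n)^{\theta_n}\hookrightarrow V(\lambda_{n+1})$; restricting it to the diagonal subalgebra $\gl(\theta_1\cdots\theta_{n-1})\subset\gl(\theta_1\cdots\theta_{n-1})^{\theta_n}$, which is precisely the chain embedding $\gl(\theta_1\cdots\theta_{n-1})\subset\gl(\theta_1\cdots\theta_n)$, turns the source into $V_{\gl(\theta_1\cdots\theta_{n-1})}(\lambda_n)^{\oplus\theta_n}$, and restricting further to one summand produces the diagonal injection $V(\lambda_n)\hookrightarrow V(\lambda_{n+1})$.

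The step I expect to be the main obstacle is (iii): one must be certain that the explicit vector $v$ lands inside the target irreducible $V(\lambda_{n_0+1})$ and not in a smaller, contraction-carrying summand of the mixed tensor space. This is exactly where the room created by the inserted zeros is used — the inequalities $\ell(\mu)+\ell(\nu)\le N\le N(\theta-1)$ let one place all of the weight $\lambda_{n_0}$ inside block $1$ while keeping blocks $2,\dots,\theta$ at weight $0$ — and it is settled by the positive-coordinate/extremal-weight bookkeeping above, equivalently by the rational $GL\downarrow GL\times GL$ branching rules of \cite{HTW}. Everything else — recognizing the two highest weight vectors, the $\gl(N)_1$-semisimplicity of the finite-dimensional module $V(\lambda_{n_0+1})$, and the directness of the $\theta$ permuted copies — is routine.
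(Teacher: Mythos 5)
Your proof is correct, and it follows the same underlying strategy as the paper's --- realizing the modules $V(\lambda_n)$ inside tensor constructions built from the natural module and its dual --- but it supplies the decisive step that the paper only asserts. The paper's argument is a three-line sketch: the injection $V_{n_0}^{\theta_{n_0}}\to V_{n_0+1}$ induces an injection of tensor algebras $T^\cdot(V_{n_0}\oplus V_{n_0}^*)^{\theta_{n_0}}\to T^\cdot(V_{n_0+1}\oplus V_{n_0+1}^*)$, which ``in turn induces'' the desired injection of irreducibles. What this leaves implicit is exactly your step (iii): one must verify that a copy of $V(\lambda_{n_0})$ in the source lands inside a copy of the particular constituent $V(\lambda_{n_0+1})$ of the target, and not in one of the contraction constituents $V(\sigma;\tau)$ with $|\sigma|<|\mu|$. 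Your explicit vector $v=u_\mu\otimes\tilde u_\nu$, together with the observation that the Weyl-invariant convex function ``sum of positive coordinates'' is bounded by $|\sigma|$ on the weights of $V(\sigma;\tau)$ and equals $|\mu|$ at the weight of $v$, settles this cleanly; the block-permutation argument then assembles the full $V(\lambda_{n_0})^{\theta_{n_0}}$, and the restriction to the diagonal and to one summand gives the second assertion exactly as intended. So your write-up is a rigorous elaboration of the paper's sketch rather than a genuinely different route; what it buys is precisely the verification --- resting on the room created by the inserted zeros, $\ell(\mu)+\ell(\nu)\le N$ --- that $\lambda_{n_0}$ can be confined to a single block, which is the real content of the lemma.
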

\begin{proof}  The natural injection $V_{n_{0}}^{\theta_{n_{0}}}\to V_{n_{0}+1}$ induces a natural injection of $\gl (\theta_1\ldots\theta_{n_{0}})^{\theta_{n_{0}+1}}$-modules
$$
T^\cdot(V_{n_{0}}\oplus V^*_{n_{0}})^{\theta_{n_{0}+1}}\to T^\cdot(V_{n_{0}+1}\oplus V^*_{n_{0}+1})
$$
which in turn induces an injection
$$
V(\lambda_{n_{0}})^{\theta_{n_{0}}}\to V(\lambda_{n_{0}+1})
$$
as required.
\end{proof}

\begin{corollary}\label{cor34} For every $n_0$ and any dominant integral
weight $\lambda_{n_{0}}$ of $\gl (\theta_1\ldots\theta_{n_{0}-1})$, $\tilde V(\lambda_{n_{0}})$ is a simple $\gl (\Theta)$-module defined as the direct limit $\varinjlim\limits_{n\geq n_{0}} V(\lambda_n)$, where $V(\lambda_n)$ is embedded diagonally into $V(\lambda_{n+1})$ according to \refle{le33}.
\end{corollary}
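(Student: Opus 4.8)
The plan is to reduce everything to the injections furnished by \refle{le33}, after which simplicity is a routine direct-limit argument.

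First I would verify that $\tilde V(\lambda_{n_0})$ is well defined as a $\gl(\Theta)$-module. By \refle{le33}, for each $n>n_0$ the map $V(\lambda_n)\to V(\lambda_{n+1})$ is an injection of $\gl(\theta_1\ldots\theta_{n-1})$-modules, where $\gl(\theta_1\ldots\theta_{n-1})$ acts on $V(\lambda_{n+1})$ via the inclusion $\gl(\theta_1\ldots\theta_{n-1})\subset\gl(\theta_1\ldots\theta_n)$; the case $n=n_0$ is obtained by composing the diagonal embedding $V(\lambda_{n_0})\hookrightarrow V(\lambda_{n_0})^{\theta_{n_0}}$ with the injection $V(\lambda_{n_0})^{\theta_{n_0}}\to V(\lambda_{n_0+1})$ of \refle{le33}. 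Hence the modules $V(\lambda_n)$, $n\geq n_0$, together with these maps form a directed system that is compatible with the chain $\cup_n\gl(\theta_1\ldots\theta_{n-1})=\gl(\Theta)$, so $\tilde V(\lambda_{n_0}):=\varinjlim_{n\geq n_0}V(\lambda_n)$ carries a natural $\gl(\Theta)$-module structure. I identify each $V(\lambda_n)$ with its image, so that $\tilde V(\lambda_{n_0})=\cup_{n\geq n_0}V(\lambda_n)$; this is nonzero since $V(\lambda_{n_0})\neq 0$.

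The key step is: for every $n\geq n_0$, $U(\gl(\theta_1\ldots\theta_n))\cdot V(\lambda_n)=V(\lambda_{n+1})$. Indeed $V(\lambda_{n+1})$ is simple as a $\gl(\theta_1\ldots\theta_n)$-module, and the image of $V(\lambda_n)$ under the (injective, hence nonzero) transition map contains a nonzero vector $w$; since any nonzero vector of a simple module generates it, $U(\gl(\theta_1\ldots\theta_n))\cdot w=V(\lambda_{n+1})$, whence the claim.

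Finally I deduce simplicity. Let $N$ be a nonzero $\gl(\Theta)$-submodule of $\tilde V(\lambda_{n_0})$ and choose $0\neq v\in N$; then $v\in V(\lambda_n)$ for some $n\geq n_0$. Since $V(\lambda_n)$ is simple over $\gl(\theta_1\ldots\theta_{n-1})$, we get $V(\lambda_n)=U(\gl(\theta_1\ldots\theta_{n-1}))\cdot v\subseteq N$. By the key step and induction on $m\geq n$ we obtain $V(\lambda_m)=U(\gl(\theta_1\ldots\theta_{m-1}))\cdot V(\lambda_{m-1})\subseteq N$ for all $m\geq n$, so $\tilde V(\lambda_{n_0})=\cup_m V(\lambda_m)\subseteq N$ and $N=\tilde V(\lambda_{n_0})$. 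I do not expect a serious obstacle: the entire substance of the corollary lives in the injections of \refle{le33}, and what remains is the standard fact that a direct limit of finite-dimensional simple modules in which each term generates the next is simple. The only point needing a little care is the compatibility of the transition maps with the Lie algebra chain, which is precisely what \refle{le33} supplies.
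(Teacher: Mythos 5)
Your argument is correct and is precisely the routine direct-limit argument the paper leaves implicit by stating this as an immediate corollary of Lemma \ref{le33}: the transition maps are injections of modules over the successive terms of the chain, so any nonzero submodule meets some $V(\lambda_n)$, absorbs it by simplicity, and then absorbs every $V(\lambda_m)$, $m\geq n$, by induction. No gaps; the compatibility point you flag is indeed exactly what Lemma \ref{le33} supplies.
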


Let now $\lambda_{n^{1}_{0}}\ldots \lambda_{n^{p}_{0}}$ be $p$ dominant weights as in \refcor{cor34}.  Assume that the ordering of the weights is compatible with $\frak n$, i.e. that the $h$ value of any root $\varepsilon_i - \varepsilon_j, i < j$, of $\frak g_1 = \gl (p)$ has non-negative real part.  Define $E$ as $V(\lambda_{\frak n^{1}_{0}})\boxtimes\ldots\boxtimes\tilde V(\lambda_{n^{p}_{0}})$ with trivial action of $\frak n$.

\begin{prop}\label{prop35} $M(\frak p, E)=\ind_\pp^\gg E$ is an $(\frak m,\frak k\cap\frak m)$-module of finite type.
\end{prop}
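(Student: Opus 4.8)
\medskip
\noindent\textbf{Proof proposal.}
The plan is to deduce the finite-type assertion from \refprop{prop11} by a $\frak t_0$-weight-space analysis of $M(\frak p,E)$. Recall from \refcor{cor32} that $\frak m=C_{\frak g}(\frak t_0)=\gl(\Theta)^{p}$, so $\frak t_0$ lies in the center $Z_{\frak m}$ of $\frak m$, and that $C_{\frak g}(\kk_0)\simeq\gl(\Theta)$ sits diagonally in $\gl(\Theta)^{p}$; since $\frak t_0$ is a Cartan subalgebra of $\kk_0$, we also have $\frak m_0=\frak t_0$ (cf.\ the proof of \refth{th22}(e)), hence $\kk\cap\frak m=\frak t_0+C_{\frak g}(\kk_0)$. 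As $\frak t_0\subset Z_{\frak m}$ acts semisimply, $M(\frak p,E)$ is the direct sum of its $\frak t_0$-weight spaces $M(\frak p,E)_\chi$, and each is a $C_{\frak g}(\kk_0)$-module on which $\frak t_0$ acts by the scalar $\chi$. I claim that the two conditions defining finite type over $\kk\cap\frak m$ both follow from the single statement
\[
(\ast)\qquad\text{every }M(\frak p,E)_\chi\text{ has finite length as a }C_{\frak g}(\kk_0)\text{-module.}
\]
Indeed, a finitely generated $\kk\cap\frak m$-submodule $M'$ has generators involving only finitely many $\frak t_0$-weights, and $U(\kk\cap\frak m)$ preserves the $\frak t_0$-grading (as $C_{\frak g}(\kk_0)$ commutes with $\frak t_0$); hence $M'=\bigoplus_\chi M'_\chi$ with each $M'_\chi\subset M(\frak p,E)_\chi$ a $C_{\frak g}(\kk_0)$-submodule, so $M'$ has finite $\kk\cap\frak m$-length by $(\ast)$. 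Moreover, a fixed simple integrable $\kk\cap\frak m$-module $L$ has a single $\frak t_0$-weight $\chi_L$ and is simple over $C_{\frak g}(\kk_0)$, so $[M':L]=[M'_{\chi_L}:L]\le[M(\frak p,E)_{\chi_L}:L]$, a bound independent of $M'$, again by $(\ast)$.

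To prove $(\ast)$, use the Poincar\'e--Birkhoff--Witt filtration: it suffices to bound the $C_{\frak g}(\kk_0)$-length of each $\frak t_0$-weight space of the associated graded $\frak m$-module $S^{\cdot}(\bar{\frak n})\otimes E$. Write $\bar{\frak n}=\bigoplus_{1\le j<i\le p}\bar{\frak n}^{(ij)}$ for the decomposition of $\bar{\frak n}$ coming from the $p\times p$ block structure of $\frak g$ relative to $\frak t_0$. As a $\frak t_0\times C_{\frak g}(\kk_0)$-module, $\bar{\frak n}^{(ij)}\simeq\CC_{\eps_i-\eps_j}\boxtimes\bigl(V'\otimes(V')^{*}\bigr)$, where $V'$ is the natural $C_{\frak g}(\kk_0)\simeq\gl(\Theta)$-module and $\eps_1,\dots,\eps_p\in\frak t_0^{*}$ are the standard weights of $\gl(p)$. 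Crucially, $\frak t_0$ acts on all of $E$ by a \emph{single} weight $\chi_E$, because the trace of the highest weight of the $n$-th term in each of the $p$ diagonal chains defining $E$ is unchanged when zeros are inserted (see \refle{le33}). Consequently, as $C_{\frak g}(\kk_0)$-modules,
\[
\bigl(S^{\cdot}(\bar{\frak n})\otimes E\bigr)_\chi\;\simeq\;\bigoplus_{\substack{(m_{ij})\in\ZZ_{\ge0}^{\binom{p}{2}}\\ \sum_{i>j}m_{ij}(\eps_i-\eps_j)=\chi-\chi_E}}\;\Bigl(\bigotimes_{i>j}S^{m_{ij}}\bigl(V'\otimes(V')^{*}\bigr)\Bigr)\otimes E',
\]
where $E'$ is $E$ regarded over $C_{\frak g}(\kk_0)$. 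The index set is finite: applying to the defining relation the linear functional on $\frak t_0^{*}$ that sends every $\eps_i-\eps_j$ to $i-j$ gives $\sum_{i>j}m_{ij}\le\sum_{i>j}(i-j)m_{ij}$, a nonnegative integer determined by $\chi-\chi_E$.

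It remains to bound the $C_{\frak g}(\kk_0)$-length of a single summand $Y:=\bigl(\bigotimes_{i>j}S^{m_{ij}}(V'\otimes(V')^{*})\bigr)\otimes E'$, with $m:=\sum_{i>j}m_{ij}$ fixed. Restricting to $C_{\frak g}(\kk_0)\cap\frak g_n\simeq\gl(d_n)$, $d_n:=\theta_1\ldots\theta_{n-1}$ (the diagonal subalgebra of $\gl(d_n)^{p}$), the module $V'\otimes(V')^{*}$ is a summand of $T^{2}(V_n'\oplus(V_n')^{*})$, while $E'$ is a $\gl(d_n)$-constituent of $T^{K_0}(V_n'\oplus(V_n')^{*})$ with $K_0$ \emph{independent of $n$} --- $K_0$ being the sum over the $p$ factors of $E$ of the two box counts of the corresponding dominant weights, which do not change under the insertions of zeros in \refle{le33}. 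Hence $Y$, restricted to $\gl(d_n)$, is a subquotient of $T^{N}(V_n'\oplus(V_n')^{*})$ with $N=2m+K_0$ bounded independently of $n$. By \refprop{prop11} the length of $T^{N}(V_n'\oplus(V_n')^{*})$ over $\sl(d_n)$ stabilizes, hence is bounded uniformly in $n$; as a simple finite dimensional $\gl(d_n)$-module restricts to a simple $\sl(d_n)$-module, the $\gl(d_n)$-length of $Y$ is bounded uniformly in $n$ as well, so the $C_{\frak g}(\kk_0)$-length of $Y$ --- the union of its $\gl(d_n)$-submodules --- is finite. Summing over the finitely many $(m_{ij})$ yields $(\ast)$, and the proposition follows.

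\medskip
\noindent The crux --- and the only place where something can go wrong --- is keeping every datum fed to \refprop{prop11} (the tensor degree $N$ and the numbers of copies of $V_n'$, $(V_n')^{*}$, $\CC$) bounded independently of $n$; this hinges on the two observations stressed above, namely that fixing the $\frak t_0$-weight bounds $\sum_{i>j}m_{ij}$ and that the box counts of the weights occurring in $E$ are level-independent. The remaining steps --- passing to the associated graded, and going from uniform $\gl(d_n)$-length bounds to finite $C_{\frak g}(\kk_0)$-length --- are routine.
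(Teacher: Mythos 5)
Your proof is correct and follows essentially the same route as the paper's: pass to the PBW associated graded $S^{\cdot}(\bar{\frak n})\otimes E$, use the $\frak t_0$-weight decomposition to reduce to finitely many fixed symmetric powers, realize each piece inside a tensor power $T^{N}(V_n'\oplus(V_n')^{*})$ with $N$ independent of $n$, and invoke \refprop{prop11}. You supply more detail than the paper (e.g.\ the reduction of finite type to finite length of $\frak t_0$-weight spaces, and the uniform bound on the tensor degree); the only cosmetic caveat is that your functional $\eps_i-\eps_j\mapsto i-j$ is cleaner replaced by $-\mathrm{Re}(\,\cdot\,(h))$, which is manifestly well defined on $\frak t_0^{*}$ for an arbitrary reductive $\kk_0$.
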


\begin{proof} It suffices to show that $\Gr \, M(\frak p, E)$ is an $(\frak m, \frak k\cap\frak m)$-module of finite type.  As a $\frak m$-module $\Gr \, M(\frak p, E)$ is isomorphic to $S^\cdot (\bar{\frak n})\otimes E$, and is in particular a weight module over the Cartan subalgebra $\frak t_0$ of $\frak k_0$.  This subalgebra acts via a single weight on $E$ and via arbitrary sums of $\frak p$-negative $\frak t_0$-weights on $S^\cdot (\bar{\frak n})$.  Since each $\frak t_0$-weight of $S^\cdot (\bar{\frak n})$ occurs only in finitely many symmetric powers of $\bar{\frak n}$, it suffices to show that each fixed tensor product $S^t(\bar{\frak n})\otimes E$ is a $\frak k\cap\frak m$-module of finite length.  Notice that $E$ is a direct limit $\varinjlim\limits_{\frak n\geq\text{max}(n^{1}_{0},\ldots\frak n^{p}_{0})} E_n$ such that each $E_n$ is a  $C_{\frak g_{n}} (\frak k_0)\simeq \gl (\theta_1\ldots\theta_{n-1})$-submodule of a fixed tensor power $T^k(V^p_n\oplus (V^*_n)^p)$.  Hence $S^t(\bar{\frak n}_{n})\otimes E_n$ is also contained in a fixed tensor power $T^{k} (V^p_n\oplus (V^*_n)^p)$.  \refprop{prop11} now implies that, for each $n$, $S^t(\bar{\frak n}_{n})\otimes E_n$ is a $C_{\frak g} (\frak k_0)\cap\frak g_n$-module of finite length, hence $S^t(\bar{\frak n})\otimes E$ is a $\frak k\cap\frak m$-module of finite length. The Proposition follows.
\end{proof}

Note now that the assumptions of \refth{th22}(e) apply to the case we consider.  Therefore, to ensure that $A(\frak p, E)$ is non-zero, it suffices to ensure that the weight $\mu^{\vee}$ is integral $\frak k_0$-dominant.  An easy computation shows that the weight $\mu$ is nothing but the weight $(\sum\limits_i \lambda^i_{n^{1}_{0}}, \sum\limits_i \lambda^i_{n^{2}_{0}}, \ldots , \sum\limits_i \lambda^i_{n^{p}_{0}})$ of $\frak g_1$, restricted to $\frak t_0$. Let $\kk_0=\gl(p)$. Then the regularity and $\frak k_0$-dominancy condition on $\mu^{\vee}$ are equivalent to the condition
$$
\sum\limits_i \lambda^i_{n^{1}_{0}}\leq \sum\limits_i \lambda^i_{n^{2}_{0}}\leq\ldots\leq \sum\limits_i \lambda^i_{n^{p}_{0}}.
$$

Note furthermore, that our choice of weights $\lambda_{n^{1}_{0}},\ldots ,\lambda_{n^{p}_{0}}$ allows for the possibility the $Z_{U(\frak g_{N})}$-character of $\text{ind}^{\frak g_{N}}_{\frak p_{N}}E_N$ to be non-regular for some $N$, and hence in the latter case, no irreducible bottom layer quotient of $A(\frak p, E)$ is $\frak g$-integrable. Since $\frak k_0 = \gl (p)$, $\frak k$ is a maximal proper subalgebra of $\gl (p\Theta)$. This implies (via \refth{th22}(d)) that whenever $A(\frak p, E)$ is not integrable, any irreducible bottom layer quotient of $A(\frak p ,E)$ is a strict $(\frak g, \frak k )$-module. In particular, $\frak k=\gl(p)+\gl(\Theta)$ is a primal subalgebra of $\gl (p\Theta)$.

Finally, \refle{le31} (a) and (b) imply that the condition $\frak m\subset\frak k$ from \refth{th22}(f) holds only when $p=1$.  However, in this case $s=0$, hence the claim of (f) is trivial.  Nevertheless, there is an interesting non-trivial case in which \refth{th22} (f) applies: this is when $\lambda_{\frak n^{1}_{0}} =\ldots = \lambda_{n^{p-1}_{0}} = 0$ and $\lambda_{n_{0}^{p}}\neq 0$.  In this latter case $E''$ is clearly a simple ${C}_{\frak g}(\frak k_0)$-module.  Furthermore, as it is easy to see, for large $n$ the $Z_{U(\frak g_{n})}$-character of $\text{ind}^{\frak g_{n}}_{\frak p_{n}} E_n$ is integral but not regular, hence the $(\gg,\kk)$-module $A(\frak p,E)$ has a unique strict simple subquotient.

\section{The root-reductive case}\label{TheRootRed}
Let now $\frak g$ be a simple infinite dimensional root-reductive Lie algebra, i.e. $\frak g\cong s\ell (\infty), so(\infty), sp(\infty)$.  Fix an exhaustion $\frak g = \cup_n \frak g_n$, where $\frak g_n\subset\frak g_{n+1}$ is a root injection of the form $sl(i)\subset s\ell (i+1)$, $so(i)\subset so(i+2)$, or $sp(2i)\subset sp(2i+2)$, for $\frak g$ isomorphic respectively to $s\ell(\infty), so(\infty)$ or $sp(\infty)$. Then each $\frak g_n$ is reductive in $\frak g$ and $C_{\frak g}(\frak g_n)\simeq \frak g$ for $\frak g \simeq so(\infty), sp(\infty)$, and $C_{\frak g}(\frak g_n)\simeq \gl (\infty)$ for $\frak g = s\ell (\infty)$.  Moreover, for a fixed $n$, the subalgebra $\frak g_n \oplus C_{\frak g}(\frak g_n)$ has the property that its intersections with $\frak g_{n'}$ for all $n' > n$ are symmetric subalgebras. 

We fix next a reductive in $\frak g_1$ subalgebra $\frak k_0\subset\gg_1$, a Cartan subalgebra $\frak t_0\subset \frak k_0$ and a $\frak t_0$-compatible parabolic subalgebra $\frak p = \frak m\crplus\frak n$, and let $\frak m_0 = \frak m\cap\frak k_0$.  For instance, for $\gg\simeq\sl(\infty)$, $\frak p$ can be a maximal proper subalgebra of $\frak g$, whose intersection with $\frak g_n$ for $n> 1$ equals a maximal parabolic subalgebra of $\frak g_n$ containing $C_{\frak g_{n}}(\frak g_1)$. Note that
\begin{equation}\label{eq9}
\frak m_0\oplus C_{\frak  g} (\frak g_1)\subset \frak k\cap \frak m.
\end{equation}

Let $E = \cup_n E_n$, where, for $n$ large enough, each $E_n$ is a simple $\frak m_n$-submodule of a tensor power $T^k (V^a_n\oplus (V^*_n)^b\oplus\CC^c)$ for fixed $k,a,b,c$ (when $\frak g\simeq so(\infty), sp(\infty)$, there is an isomorphism $V_n\simeq V^*_n$).

\begin{prop} \label{prop42} $M(\frak p, E)$ is an $(\frak m,\frak
k\cap\frak m)$-module of finite type.
\end{prop}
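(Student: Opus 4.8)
The plan is to follow the same strategy as in the proof of \refprop{prop35}, replacing the use of the diagonal stabilization with the stabilization result for root injections. First I would reduce to the associated graded: by the Poincar\'e--Birkhoff--Witt theorem, $\Gr M(\frak p, E)$ is isomorphic as an $\frak m$-module to $S^\cdot(\bar{\frak n})\otimes E$, so it suffices to show the latter is an $(\frak m, \frak k\cap\frak m)$-module of finite type. Since $\frak t_0$ acts via a single weight on $E$ and via sums of $\frak p$-negative $\frak t_0$-weights on $S^\cdot(\bar{\frak n})$, and each such weight appears in only finitely many symmetric powers $S^t(\bar{\frak n})$, it is enough to prove that each fixed $S^t(\bar{\frak n})\otimes E$ is a $\frak k\cap\frak m$-module of finite length.

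Next I would work at the finite level. Write $S^t(\bar{\frak n})\otimes E = \cup_n \bigl(S^t(\bar{\frak n}_n)\otimes E_n\bigr)$, and observe that $\bar{\frak n}_n\subset\frak g_n$, so $\bar{\frak n}_n$ embeds into a fixed tensor power of the natural $\frak g_n$-module and its dual; concretely $\bar{\frak n}_n$ is a subquotient of $V_n\otimes V_n^*$ (plus a trivial summand when $\frak g\simeq so(\infty)$ or $sp(\infty)$, where one also uses $V_n\simeq V_n^*$). Combining this with the hypothesis that each $E_n$ is a simple $\frak m_n$-submodule of $T^k(V_n^a\oplus(V_n^*)^b\oplus\CC^c)$, I get that $S^t(\bar{\frak n}_n)\otimes E_n$ is contained in a fixed tensor power $T^{k'}(V_n^{a'}\oplus(V_n^*)^{b'}\oplus\CC^{c'})$ with $k',a',b',c'$ independent of $n$. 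By \refprop{prop11} the length of such a tensor power as an $\ss_n$-module — where $\ss_n$ is the simple part of $\frak m_n$ acting on the relevant natural module — stabilizes as $n\to\infty$; together with the containment \refeq{eq9}, which guarantees that the relevant $\frak k\cap\frak m$-action sees all the isotypic pieces, this bounds the length of $S^t(\bar{\frak n}_n)\otimes E_n$ as a $\frak k\cap\frak m\cap\frak g_n$-module uniformly in $n$. Hence $S^t(\bar{\frak n})\otimes E$ has finite length over $\frak k\cap\frak m$, and the Proposition follows.

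The main obstacle I anticipate is the bookkeeping of which classical Lie algebra \refprop{prop11} is to be applied to, and making sure the branching from $\frak m_n$ (a direct sum of classical simple algebras together with an abelian center, in general) down to $\frak k\cap\frak m\cap\frak g_n$ is controlled. In the $\gl(p\Theta)$ case of \refprop{prop35} this was transparent because $\frak m\cap\frak g_n$ was a product of copies of one $\gl$ and $C_{\frak g_n}(\frak k_0)$ was the diagonal; here the parabolic $\frak p$ is more flexible (e.g. the maximal parabolic example for $\sl(\infty)$), so I would need to check that for each such $\frak p$ the reductive part $\frak m_n$ decomposes so that the natural $\frak m_n$-modules are again built from $V_n, V_n^*, \CC$ up to the classical-type combinatorics, and that the subalgebra $C_{\frak g}(\frak g_1)$ (which sits inside $\frak k\cap\frak m$ by \refeq{eq9}) acts on $S^t(\bar{\frak n}_n)\otimes E_n$ through a tensor module of the bounded shape required by \refprop{prop11}. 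Once that identification is in place the argument is a routine transcription of the proof of \refprop{prop35}.
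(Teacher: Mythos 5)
Your argument is essentially the paper's own: reduce via Poincar\'e--Birkhoff--Witt to $\Gr M(\frak p,E)\simeq S^\cdot(\bar{\frak n})\otimes E$, use the $\frak t_0$-weight bookkeeping to reduce to a fixed $S^t(\bar{\frak n})\otimes E$, realize $S^t(\bar{\frak n}_n)\otimes E_n$ inside a fixed tensor power $T^{k'}(V_n^{a'}\oplus(V_n^*)^{b'}\oplus\CC^{c'})$ of the natural module of $C_{\frak g}(\frak g_1)\cap\frak g_n$, and conclude with \refprop{prop11} together with the containment \refeq{eq9}. Two small corrections: in this setting $\frak t_0$ need not be central in $\frak m$, so $E$ carries finitely many $\frak t_0$-weights rather than a single one (which is all the reduction needs), and \refprop{prop11} is to be applied to the simple part of $C_{\frak g}(\frak g_1)\cap\frak g_n$ -- the growing classical algebra -- not to the simple part of $\frak m_n$.
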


\begin{proof} According to \refeq{eq9}, it suffices to show that $M(\frak p,E)$ is an $\frak m_0\oplus C_{\frak g}(\frak g_1)$-module of finite type.  The argument is very similar to that in the proof of \refprop{prop35}. Consider $\Gr M(\frak p,E)\simeq S^\cdot (\bar{\frak n})\otimes E$ and note that only finitely many $\frak t_0$-weights occur in $E$, and that each $\frak t_0$-weight of $S^\cdot(\bar{\frak n})$ will occur only in finitely many symmetric powers of $\bar{\frak n}$.  Hence it suffices to show that each fixed tensor product $S^t(\bar{\frak n})\otimes E$ is a $C_{\frak g}(\frak g_1)$-module of finite length.  However, a direct verification based on the definition of $\frak g_1$ shows that for each $n > 1, \bar{\frak n}\cap\frak g_n$ is a $C_{\frak g}(\frak g_1)\cap\frak g_n$-submodule of a fixed tensor power $T^{k}(V^{a}_n\oplus (V^*_n)^{b}\oplus \CC^{c})$, where $V_n$ is the natural representation of  $C_{\frak g}(\frak g_1)\cap\frak g_n$, and $a, b, c\in\ZZ_{>0}$. Hence, for each fixed $t$, $S^t(\bar{\frak n}\cap\frak g_n)\otimes E_n$ is a submodule of an analogous fixed tensor power, and by \refprop{prop11}, $S^t(\bar{\frak n})\otimes E$ is a $C_{\frak g}(\frak g_1)$-module of finite length.
\end{proof}

In the remainder of this section we concentrate on the case $\kk_0=\gg_1$, assuming that $\gg_1$ is non-abelian. In this case $\kk_n=(\gg_1\oplus C_\gg(\gg_1))\cap\gg_n$ is a symmetric subalgebra of $\gg_n$ for $n\geq 2$ and the existing literature on Harish-Chandra modules enables us to prove a stronger version of our main result under slightly different conditions on the compatible parabolic subalgebra $\pp$ and the $\pp$-module $E$. More precisely, let $\pp$ equal $\bigoplus\limits_{\sigma\geq 0}\gg_h^\sigma$ for some real diagonal matrix $h\in\mathfrak t_0$, and $\mm:=C_\gg(h)$. Then $\frak m$ is the direct sum of a reductive in $\frak k_0$ subalgebra $\frak m'$ and an infinite dimensional subalgebra $\frak m''$ isomorphic to $\gl(\infty)$, $\so(\infty)$ or $\sp(\infty)$. Note that ${\frak m}'' \supseteq C_{\frak g} (\frak k_0)$ and that $(\mm_n,\kk_n\cap\mm_n)$ is a symmetric pair for each $n$.

\begin{theo}\label{th43}
For $\gg$ and $\kk$ as above, let the $\pp$-module $E$ satisfy the condition of \refth{th22}(c). In addition, assume that, for some $N\in\ZZ_{\geq 0}$, $E_N$ is a simple finite dimensional $\frak m_{N}$-module such that $A(\frak p_{N}, E_{N})$  is a simple strict $(\frak g_{N}, \frak k_{N})$-module with non-zero bottom layer. Let $v\in A(\pp,E)$ be a non-zero vector in the image of the bottom layer of $A(\pp_{N},E_{N})$ (the existence of $v$ follows from \refth{th22}(c)) and let $X_v$ be a simple quotient of $U(\gg)\cdot v$. Then
\item[(a)] $X_v$ is a strict $(\gg,\kk)$-module;
\item[(b)] if, for all $n$, $E_n$ has finite length as a $(\kk_n\cap\mm_n)$-module, $X_v=\cup_n(X_v)_n$ where each $(X_v)_n$ is a Harish-Chandra $(\gg_n,\kk_n)$-module.
\end{theo}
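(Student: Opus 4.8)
The plan is to deduce (a) by pinning down the Fernando--Kac subalgebra $\gg[X_v]$ among the (very few) subalgebras of $\gg$ lying between $\kk$ and $\gg$, and to deduce (b) from a category $\mathcal{O}$ argument at each finite level.

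\textbf{Common preliminaries.} Since $X_v$ is a $\gg$-subquotient of $A(\pp,E)$, \refth{th22}(a) gives that $X_v$ is a $(\gg,\kk)$-module, so $\kk\subseteq\gg[X_v]$. Writing $A(\pp,E)=\varinjlim A(\pp_n,E_n)$ as in the proof of \refth{th22}(c) (via \refprop{prop14}(c)), with canonical $\gg_n$-equivariant maps $\psi_n\colon A(\pp_n,E_n)\to A(\pp,E)$, we have $v=\psi_N(v_N)$ for a nonzero $v_N$ in the bottom layer of $A(\pp_N,E_N)$, and for $n\geq N$ also $v=\psi_n(v_n)$ with $v_n\in A(\pp_n,E_n)$. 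Hence, if $(X_v)_n$ denotes the image in $X_v$ of $U(\gg_n)\cdot v$, then each $(X_v)_n$ with $n\geq N$ is a $\gg_n$-subquotient of $A(\pp_n,E_n)$, and $X_v=\cup_{n\geq N}(X_v)_n$. Because $A(\pp_N,E_N)$ is simple and $v_N\neq 0$, the map $\psi_N$ is injective on $A(\pp_N,E_N)$, so $U(\gg_N)\cdot v\cong A(\pp_N,E_N)$; its image $(X_v)_N$ in the simple module $X_v$, being a nonzero quotient of a simple module, satisfies $(X_v)_N\cong A(\pp_N,E_N)$. In particular $\gg_N[(X_v)_N]=\kk_N$.

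\textbf{Proof of (a).} As $\gg[X_v]$ is a subalgebra containing $\kk\supseteq\kk_0=\gg_1$, it is stable under $\ad\gg_1$. Decomposing $\gg$ into $\gg_1$-isotypic components, one has $\gg=\kk\oplus W$ with $W$ carrying only the natural (and, when $\gg\simeq\sl(\infty)$, also the conatural) $\gg_1$-isotype; any $\ad\gg_1$-stable subspace between $\kk$ and $\gg$ is $\kk\oplus W'$ for a $\gg_1$-submodule $W'\subseteq W$, and requiring this to be a subalgebra forces the corresponding multiplicity space to be a $C_\gg(\gg_1)$-submodule of the natural module of $C_\gg(\gg_1)\simeq\gl(\infty),\so(\infty)$ or $\sp(\infty)$, which is simple. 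Thus $\gg[X_v]$ is $\gg$ or $\kk$ when $\gg\simeq\so(\infty),\sp(\infty)$, and is $\gg$, $\kk$, or one of the two parabolic subalgebras $\pp^{\pm}$ with reductive part $\kk$ when $\gg\simeq\sl(\infty)$. Now intersect with $\gg_N$ (taken large enough that $\kk_N$ is properly contained in $\gg_N$): since $(X_v)_N$ is a $\gg_N$-submodule of $X_v$,
$$\gg[X_v]\cap\gg_N=\gg_N[X_v]\subseteq\gg_N[(X_v)_N]=\kk_N,$$
so $\gg[X_v]\cap\gg_N=\kk_N$. But $\gg\cap\gg_N$ and $\pp^{\pm}\cap\gg_N$ all strictly contain $\kk_N$; hence $\gg[X_v]=\kk$, i.e. $X_v$ is strict.

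\textbf{Proof of (b) and the main difficulty.} If each $E_n$ has finite length over the reductive subalgebra $\kk_n\cap\mm_n$ and is $\kk_n\cap\mm_n$-integrable, then by complete reducibility $E_n$ is a finite direct sum of finite-dimensional simple $\mm_n$-modules, hence finite-dimensional. Then $M(\pp_n,E_n)=\ind^{\gg_n}_{\pp_n}E_n$ is a finite direct sum of generalized Verma modules with respect to a Borel subalgebra contained in $\pp_n$, so it lies in category $\mathcal{O}(\gg_n)$; in particular it has finite length over $\gg_n$ and finitely many generalized $Z_{U(\gg_n)}$-characters. Since $R^s\Gamma_{\kk_0,\mm_0}$ preserves finite length on such modules and produces $\kk_n$-integrable modules (the finite-dimensional instance of \refth{th22}(a); cf.\ \cite{KV}), $A(\pp_n,E_n)$ is a Harish-Chandra $(\gg_n,\kk_n)$-module; as $(X_v)_n$ is a $\gg_n$-subquotient of it for $n\geq N$, so is $(X_v)_n$, and $X_v=\cup_{n\geq N}(X_v)_n$. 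I expect the main obstacle to lie in part (a), namely in the structural claim that the only subalgebras of $\gg$ strictly between $\kk$ and $\gg$ are the two parabolics $\pp^{\pm}$ (and that there are none at all when $\gg\simeq\so(\infty),\sp(\infty)$); this is exactly where the $\gg_1$-isotypic decomposition of $\gg$ and the simplicity of the natural $\gl(\infty)/\so(\infty)/\sp(\infty)$-modules enter, after which the identification $(X_v)_N\cong A(\pp_N,E_N)$ and the descent to $\gg_N$ finish the argument.
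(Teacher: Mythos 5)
Your proposal is correct and takes essentially the same route as the paper: in (a) you use the injectivity of the composite map from the simple module $A(\pp_N,E_N)$ into $X_v$ to get $\gg[X_v]\cap\gg_N=\kk_N$, and then rule out all subalgebras strictly between $\kk$ and $\gg$ (you supply the $\gg_1$-isotypic justification that the paper merely asserts); in (b) you reduce to finite length of $A(\pp_n,E_n)$ and cite \cite{KV}. The one place where the paper is more careful than you are is in (b): the finite-length results of \cite[Ch.\romanV]{KV} are stated for cohomological induction relative to the symmetric pair $(\gg_n,\kk_n)$, so before citing them the paper first applies the comparison principle (\refprop{prop15}) to identify $R^s\Gamma_{\kk_0,\mm_0}(\ind^{\gg_n}_{\pp_n}E_n)$ with $R^s\Gamma_{\kk_n,\kk_n\cap\mm_n}(\ind^{\gg_n}_{\pp_n}E_n)$ -- your direct assertion that $R^s\Gamma_{\kk_0,\mm_0}$ preserves finite length needs this intermediate identification to be licensed by \cite{KV}.
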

\begin{proof}
\item[(a)] Let $\pi: U(\gg)\cdot v\to X_v$ be the projection which defines $X_v$, and let $\kappa :A(\frak p_{N}, E_{N})\to A(\frak p, E)$ be the functorially induced map of $(\frak g_{N}, \frak k_{N})$-modules. By our assumptions,  $(\pi\circ\kappa) (v)\neq 0$ and as $A(\pp_N,E_N)$ is simple, $\pi\circ\kappa\neq 0$ is injective. It follows that $\frak g_{N} [A(\frak p_{N}, E_{N}]\supseteq \frak g [X]\cap\frak g_{N}$.  Since $\frak g_{N} [A(\frak p_{N}, E_{N})] = \frak k_{N}$ and is a $(\frak g,\frak k)$-module we conclude that $\frak g [X]\cap\frak g_{N} = \frak k_{N}$.

The inclusion $\frak g [X]\supset \frak k$ implies the following possibilities for $\gg[X]$. If $\gg=\so(\infty), \sp(\infty)$ $\gg[X]$ equals $\kk$ or $\gg$ as $\kk$ is a maximal subalgebra of $\gg$, and if $\gg=\sl(\infty)$ there are four possibilities for $\frak g [X]$: $\gg$, the two opposite parabolic subalgebras ${\frak q}^\pm$ containing $\kk$, and the subalgebra $\kk$. However, in all cases the only possibility compatible with the equality $\gg[X]\cap\gg_N=\kk_N$ is $\gg[X]=\kk$. This proves (a).
\item[(b)] Define $X_n$ as the image of the functorial map of $A(\frak p_n, E_n)$ to $X$.  We have $A(\frak p_n, E_n) = R^s\Gamma_{\frak k_{0},\frak m_{0}} (\text{ind}^{\frak g_{n}}_{\frak p_{n}} E_n)$, $\frak k_n = \frak k_0 +C_{\frak g_{n}} (\frak k_0)$, and $\frak k_n\cap \frak m_n = \frak m_0 + C_{\frak g_{n}} (\frak k_0)$. The comparison principle yields an isomorphism of $(\frak g_n, \frak k_n)$-modules
$$
A(\frak p_n, E_n) \cong R^s\Gamma_{\frak k_{n},\frak k_{n}\cap\frak m_{n}} (\text{ind}^{\frak g_{n}}_{\frak p_{n}} E_n).
$$
Since $(\mm_n,\kk_n\cap\mm_n)$ and $(\frak g_n, \frak k_n)$ are finite dimensional symmetric pairs, any $(\gg_n,\kk_n)$-module (respectively $(\mm_n,\kk_n\cap\mm_n))$-module) of finite length is also of finite type, and hence is a Harish-Chandra module. Moreover, results in \cite[Ch.\romanV]{KV} imply that if $E_n$ has finite length, then $A(\frak p_n, E_n)$ likewise has finite length. Hence $X_n$ itself has finite length, i.e. is a Harish-Chandra module.
\end{proof}

It is easy to construct $(\mm,\kk\cap\mm)$-modules $E$ which satisfy both the assumptions of \refprop{prop42} and \refth{th43}. To satisfy the assumption of \refth{th43}, we can take $E$ to be the union $\cup_n E_n$ of finite dimensional simple $\mm_n$-modules under appropriate inclusions of $\mm_n$-modules $E_n\hookrightarrow E_{n+1}$. For a fixed $N$, we can take $E_{N}$ (for instance $E_N=\CC_{\lambda_{\pp_N}}$, see \refth{thA1} below) so that $A(\pp_{N}, E_{N})$ is simple with non-zero bottom layer. It is also clear that each $E_n$ can be chosen to be a simple submodule of $T^k(V_n^a\oplus(V_n^*)^b\oplus\CC^c)$ for some fixed $a,b,c,k\in\ZZ_{\geq 0}$. Indeed, one can fix $a,b,c,k$ so that the already chosen $\mm_N$-module $E_{N}$ be a submodule of $T^k(V_N^a\oplus(V_N^*)^b\oplus\CC^c)$ and then, for $n\geq N$, recursively choose $E_n$ as a simple submodule of $T^k(V_n^a\oplus(V_n^*)^b\oplus\CC^c)$ for which there is an injection of $\mm_{n-1}$-modules $E_{n-1}\to E_n$. Such a module $E_n$ clearly exists.

\begin{corollary}\label{cor44}
If $\gg=\sl(\infty),\so(\infty),\sp(\infty)$ and $\kk_0=\gg_1$ where $\gg_1$ is not abelian, then $\kk=\kk_0\oplus C_\gg(\kk_0)$ is a primal subalgebra of $\gg$, and moreover there exists a simple strict $(\gg,\kk)$-module $X$ of finite type such that $X=\cup_n X_n$ where $X_n$ are Harish-Chandra $(\gg_n,\kk\cap\gg_n)$-modules.
\end{corollary}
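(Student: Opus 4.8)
The plan is to assemble the corollary directly from \refth{th43} together with the existence discussion preceding it. First I would observe that the hypotheses of \refth{th43} are exactly in force here: $\gg = \sl(\infty), \so(\infty), \sp(\infty)$ with $\kk_0 = \gg_1$ non-abelian is the setting in which that theorem was stated, and the subalgebra $\kk = \kk_0 \oplus C_\gg(\kk_0)$ is the one produced there. So the substance of the argument is really the \emph{construction of a suitable $\pp$-module $E$}, and this is precisely what the paragraph between \refth{th43} and the corollary supplies. I would therefore proceed as follows: fix $h \in \frak t_0$ a real diagonal matrix defining a compatible parabolic $\pp = \mm \crplus \nn$ with $\mm = C_\gg(h)$, as in the setup of \refth{th43}, so that $\mm = \mm' \oplus \mm''$ with $\mm'$ reductive in $\kk_0$ and $\mm'' \cong \gl(\infty), \so(\infty)$ or $\sp(\infty)$ containing $C_\gg(\kk_0)$.

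Next I would construct $E = \cup_n E_n$ as an increasing union of finite-dimensional simple $\mm_n$-modules, exactly following the recipe already spelled out in the text: choose $N$ and take $E_N = \CC_{\lambda_{\pp_N}}$ (the one-dimensional module attached to a suitably chosen weight, cf. \refth{thA1}) so that $A(\pp_N, E_N)$ is a simple strict Harish-Chandra $(\gg_N, \kk_N)$-module with non-zero bottom layer; then fix $a,b,c,k$ so that $E_N$ embeds in $T^k(V_N^a \oplus (V_N^*)^b \oplus \CC^c)$, and for $n > N$ recursively pick $E_n$ to be a simple submodule of $T^k(V_n^a \oplus (V_n^*)^b \oplus \CC^c)$ admitting an $\mm_{n-1}$-module injection $E_{n-1} \hookrightarrow E_n$. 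As noted in the text such an $E_n$ exists. With this $E$ in hand, \refprop{prop42} shows $M(\pp, E)$ is an $(\mm, \kk\cap\mm)$-module of finite type, and each $E_n$ has finite length as a $(\kk_n \cap \mm_n)$-module (being finite-dimensional); moreover $E$ satisfies the condition of \refth{th22}(c) since each $E_n$ carries a $Z_{U(\mm_n)}$-character. Thus all the hypotheses of \refth{th43} are verified.

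Now I would simply invoke \refth{th43}: letting $v \in A(\pp, E)$ be a non-zero vector in the image of the bottom layer of $A(\pp_N, E_N)$ and $X = X_v$ a simple quotient of $U(\gg) \cdot v$, part (a) gives that $X$ is a strict $(\gg, \kk)$-module, and part (b) gives $X = \cup_n X_n$ with each $X_n$ a Harish-Chandra $(\gg_n, \kk_n)$-module, where $\kk_n = \kk \cap \gg_n$. By \refth{th22}(b), since $M(\pp,E)$ is of finite type over $\kk\cap\mm$, the module $A(\pp,E)$ is of finite type over $\kk$, and hence so is its subquotient $X$. Finally, by the very definition of primality, the existence of a simple strict $(\gg, \kk)$-module $X$ with $\kk$ a maximal locally reductive subalgebra of $\gg[X] = \kk$ (here $\kk$ is itself maximal among locally reductive subalgebras, trivially, since $\gg[X] = \kk$) shows $\kk$ is primal. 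This completes the proof.

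I do not expect a genuine obstacle here; the corollary is a packaging result. The one point requiring a modicum of care is checking that the recursively constructed $E$ genuinely satisfies \emph{all} three input conditions simultaneously — the finite-type condition of \refprop{prop42} (stabilization of tensor-power lengths, via \refprop{prop11}), the $Z_{U(\mm_n)}$-character condition of \refth{th22}(c), and the finite-length-over-$(\kk_n\cap\mm_n)$ condition of \refth{th43}(b) — but each of these is immediate from the finite-dimensionality and simplicity of the $E_n$ together with the cited results, so the verification is routine.
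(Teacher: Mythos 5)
Your proposal is correct and follows essentially the same route the paper intends: the corollary is indeed a packaging of Theorem \ref{th43} (strictness and the exhaustion by Harish-Chandra modules), Proposition \ref{prop42} together with Theorem \ref{th22}(b) (finite type), and the recursive construction of $E=\cup_n E_n$ described in the paragraph preceding the corollary, with $E_N=\CC_{\lambda_{\pp_N}}$ chosen via the Appendix so that $A(\pp_N,E_N)$ is simple and strict with non-zero bottom layer. The only point stated slightly more casually than it deserves is the strictness of $A(\pp_N,E_N)$ as a $(\gg_N,\kk_N)$-module, which in the non-maximal case requires choosing $h$ so that the criterion of Theorem \ref{thA2}(c) holds; but this is at the same level of detail as the paper itself.
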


\section{Appendix: The Fernando-Kac subalgebra of a Vogan-Zuckerman module}

Our aim in this appendix is to relate some of the basic literature on applications of cohomological induction with \refsec{TheRootRed} of this paper. More precisely, we recall the definition of a class of Harish-Chandra modules known as the Vogan-Zuckerman modules, \cite{VZ}, and compute the Fernando-Kac subalgebra of a Vogan-Zuckerman module.

Let $\frak g$ be a finite dimensional reductive Lie algebra (over $\CC$), $\frak k$ be a symmetric subalgebra of maximal rank, $\frak t$ be a Cartan subalgeba of $\frak k$ and let $\frak p$ be a $\frak t$-compatible parabolic subalgebra of $\frak g$.  Fix a Levi decomposition $\frak p = \frak m \crplus \frak n$ of $\frak p$ with $\frak t\subseteq\frak m$, and also a $\frak t$-compatible Borel subalgebra $\frak b\subseteq \frak p$. Then $\frak b\cap\frak k$ is a Borel subalgebra of $\frak k$ and $\frak b\cap\frak m$ is a Borel subalgebra of $\frak m$.  Relative to $\frak b$, let $w_\frak g$ be the longest element in the Weyl group of $\frak t$ in $\frak g$; relative to $\frak b\cap\frak m$ let $w_\frak m$ be the longest element in the Weyl group of $\frak t$ in $\frak m$. Finally, let $\lambda_\p := w_\g \circ w^{-1}_\m (\rho_\b) - \rho_\b$.  Note that ${\lambda_\p}_{\mid [\m, \m]} = 0$, so that $\lambda_\p$ defines a one-dimensional $\p$-module $\CC_{\lambda_{\p}}$.

The induced $\gg$-module $\ind^\g_\p \CC_{\lambda_{\p}}$ and the $(\gg,\kk)$-module $A_\p := R^s \Gamma_{\k,\k\cap\m} (\ind^\g_\p \CC_{\lambda_{\p}})$ have the same central character as the trivial $\g$-module. (Here, as usual, $s = \frac{1}{2} \dim (\k/\k\cap\m)$.)  More generally, if $F:= V_\g (\tilde \lambda)$ and $\tilde\lambda := w_\g \circ w^{-1}_\m (\lambda+\rho_\b) - \rho_\b$, then the induced $\gg$-module $\ind^\g_\p (V_\m (\tilde\lambda))$ and the $(\gg,\kk)$-module $A_\p (F) := R^s \Gamma_{\k,\k\cap\m} (\ind^\g_\p (V_\m (\tilde\lambda))$ have the same central character as $F$. We call $A_\p (F)$ the \textit{Vogan-Zuckerman module} attached to the pair $(\p, F)$.  (This definition can be extended to the case rank $\k < \text{rank}\,\, \g$, but we do not consider this generalization here.)

\begin{theo}\label{thA1}
\item[(a)] The bottom layer of $A_\p $ is simple, in particular non-zero.

\item[(b)] $A_\p (F)$ is a simple $(\g,\k)$-module, which is infinite dimensional if $\p$ is {\it proper} in $\g$.
\end{theo}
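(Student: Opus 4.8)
The plan is to realize $A_\p$ and $A_\p(F)$ as the finite-dimensional instances of the construction of \refsec{sec3} --- with $\g_n\equiv\g$, $\k_0:=\k$, $\t_0:=\t$, and $E:=\CC_{\lambda_\p}$ (resp. $E:=V_\m(\tilde\lambda)$) --- and then to combine \refle{le21} and \refth{th22}(c) with the classical description of the lowest $\k$-type of a Vogan--Zuckerman module (\cite{VZ}, \cite{KV}) for (a) and for the nonvanishing in (b), while appealing to the good-range irreducibility theorem of \cite{KV} for the simplicity in (b). Since $\k$ has maximal rank in $\g$, the algebra $\t$ is a Cartan subalgebra of $\g$, of $\m$, and of $\m_0:=\k\cap\m$; hence $C_\g(\k)\subseteq C_\g(\t)=\t\subseteq\k$, so $\k=\k+C_\g(\k)$ and the hypotheses of \refsec{sec3} are satisfied (with $C_\g(\k)$ abelian and contained in $\t$). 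Put $\p_0:=\k\cap\p$, $\b_0:=\b\cap\k$ and $s=\half\dim(\k/\m_0)$.

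\textbf{Part (a).} Since $\lambda_\p$ vanishes on $[\m,\m]\supseteq[\m_0,\m_0]$, the one-dimensional $\p$-module $\CC_{\lambda_\p}$ is, as an $\m_0$-module, the one-dimensional module $V_{\m_0}(\nu_0)$ attached to the single $\b_0\cap\m_0$-dominant weight $\nu_0:=\lambda_\p$, with $\Hom_{\m_0}(V_{\m_0}(\nu_0),\CC_{\lambda_\p})\cong\CC$. By \refle{le21} we then have an isomorphism of $\k$-modules $A(\p_0,\CC_{\lambda_\p})\cong V_\k(\nu_0^\vee)$ --- a single simple $\k$-module, or $0$ if $\nu_0^\vee$ fails to be $\b_0$-dominant integral --- where $\nu_0^\vee=w_\k\circ w_{\m_0}^{-1}(\nu_0+\rho_{\b_0})-\rho_{\b_0}$. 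Unwinding this expression together with the definition $\lambda_\p=w_\g\circ w_\m^{-1}(\rho_\b)-\rho_\b$ identifies $\nu_0^\vee$ with the highest weight of the lowest $\k$-type of the Vogan--Zuckerman module $A_\p$, which is $\b_0$-dominant integral by the classical computation (\cite{VZ}; see also \cite{KV}); in particular $V_\k(\nu_0^\vee)\neq0$. Finally, $E=\CC_{\lambda_\p}$ being one-dimensional, the hypothesis of \refth{th22}(c) holds vacuously (take $E_n\equiv E$, on which $Z_\m$ acts by a scalar), so $\Psi_{\CC_{\lambda_\p}}$ is injective and $\Im\Psi_{\CC_{\lambda_\p}}\cong A(\p_0,\CC_{\lambda_\p})=V_\k(\nu_0^\vee)$ is a nonzero simple $\k$-module. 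This proves (a).

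\textbf{Part (b): nonvanishing and simplicity.} The same mechanism applies with $E:=V_\m(\tilde\lambda)$: the $\b\cap\m$-highest line of $V_\m(\tilde\lambda)$ is also a $\b_0\cap\m_0$-highest vector, so the weight $\nu_1:=\tilde\lambda$ occurs in the $\m_0$-isotypic decomposition of $E$ with nonzero multiplicity space, and $\nu_1^\vee$ is --- again by \cite{VZ} and \cite{KV} --- the $\b_0$-dominant integral highest weight of the lowest $\k$-type of $A_\p(F)$. Since $V_\m(\tilde\lambda)$ is simple, $Z_\m$ acts on it by a scalar, and \refth{th22}(c) gives $\Hom_\k(V_\k(\nu_1^\vee),A_\p(F))\cong\Hom_{\m_0}(V_{\m_0}(\nu_1),V_\m(\tilde\lambda))\neq0$; hence $A_\p(F)\neq0$. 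For the simplicity I would invoke the literature directly: $A_\p(F)=R^s\Gamma_{\k,\m_0}(\ind_\p^\g V_\m(\tilde\lambda))$ is the Vogan--Zuckerman module attached to $(\p,F)$, whose infinitesimal character --- namely that of the finite-dimensional module $F$ --- is dominant, regular and integral, and in particular lies in the good range relative to $\p$; by the irreducibility theorem for cohomologically induced modules in the good range (\cite[Chapter VIII]{KV}; see also \cite{VZ}) such a module is zero or irreducible, and by the nonvanishing just established it is therefore simple. A self-contained proof would have to reconstruct that theorem --- combining the multiplicity-one statement for the lowest $\k$-type (\refth{th22}(c)), control of the whole $\k$-spectrum of $A_\p(F)$ through \refeq{eq4}--\refeq{eq7} and the vanishing \refle{le24}, and a duality identifying $A_\p(F)$ with the module built from the opposite parabolic, which forces every nonzero submodule to contain the lowest $\k$-type and hence to equal $A_\p(F)$. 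I expect this duality/irreducibility step to be the main obstacle, which is exactly why the clean course of action is to cite \cite{KV}.

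\textbf{Part (b): infinite-dimensionality.} Suppose $\p\subsetneq\g$, so that $\m\subsetneq\g$ and hence $\n\neq0$. The module $A_\p(F)$ has the regular integral $Z_{U(\g)}$-character of $F$, and $F$ is the unique simple finite-dimensional $\g$-module with that central character; were $A_\p(F)$ finite-dimensional it would --- being simple --- coincide with $F$, contradicting the fact that, by the Vogan--Zuckerman description of lowest $\k$-types (\cite{VZ}), the lowest $\k$-type $V_\k(\nu_1^\vee)$ of $A_\p(F)$ does not occur in $F$ regarded as a $\k$-module when $\p$ is proper (its highest weight differs from that of $F$ by the nonzero $\rho$-shift attached to $\n$). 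Alternatively, this follows directly from \refeq{eq4}, \refeq{eq7} and \refle{le24}, which present $A_\p(F)$ via $S^\cdot$ of a nonzero integrable $\k$-module. Hence $A_\p(F)$ is infinite-dimensional, completing the proof of (b).
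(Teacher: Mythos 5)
Your proof is correct and follows essentially the same route as the paper's: part (a) via \refle{le21} together with the dominance of $\lambda_\p^\vee$ supplied by \cite{VZ}, and the simplicity in part (b) by citing the irreducibility theorem for cohomological induction in \cite[Theorem 8.2]{KV}. The only (harmless) deviation is the infinite-dimensionality step, where the paper first shows $\dim A_\p=\infty$ using the non-trivial $\k$-submodule exhibited in \cite[Section 2]{VZ} and then passes to $A_\p(F)$ by translation functors, whereas you argue directly on $A_\p(F)$ via the uniqueness of the finite-dimensional simple module with a given regular integral central character.
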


\begin{proof}

\item[(a)] By \refle{le21}, the bottom layer of $A_\pp$ is isomorphic to $V_\kk(\lambda^\vee_\pp)$. This implies that the bottom layer of $A_\pp$ is simple if non-zero. To ensure that it is indeed non-zero, we need to verify that $\lambda_\pp^\vee$ is dominant with respect to $\kk$. This follows from \cite[Section 3]{VZ}, where it is established that $V_\kk(\lambda_\pp^\vee)$ is a non-zero constituent of the $\kk$-module $\Lambda^\cdot(\kk^\perp)$.

\item[(b)] For the simplicity of $A_\pp (F)$ see Theorem 8.2 on p. 550 in \cite{KV}. When $\pp$ is proper, it is shown in \cite[Section 2]{VZ} that $A_\pp$ has a non-trivial $\kk$-submodule. Since $A_\pp$ has the central character of the trivial $\gg$-module, $\dim A_\pp=\infty$. By using the translation functor one shows that $A_\pp(F)$ is likewise infinite dimensional.
\end{proof}

From now on we assume that $[\g,\g]$ is simple and that $\p$ is proper in $\g$.  We want a formula for the Fernando-Kac subalgebra associated to $A_\p (F)$.  If $\k$ is maximal in $\g$, clearly  $A_\p (F)$ is a strict $(\g, \k)$-module under our assumptions.  If $\k$ is not maximal, then its orthogonal complement $\k^\perp\subset\gg$ is reducible as a $\k$-module: $\k^\perp = \frak r\oplus\bar{\frak r}$, where $\rr$ and $\bar \rr$ are abelian subalgebras of $\gg$, and $\k \crplus \r$ and $\kk\crplus\bar{\frak r}$ are parabolic subalgebras of $\g$. Moreover, there are precisely four subalgebras of $\g$ containing $\k$: $\k, \k \crplus \r, \k \crplus \bar{\frak r}, \g$.

\begin{theo}\label{thA2}  Assume $[\g,\g]$ is simple, $\k$ is not maximal and $\p$ is proper in $\g$.

\item[(a)] $\g [A_\p (F)] = \k \crplus \r$ if $\bar\r \cap\n = 0$.

\item[(b)] $\g [A_\p (F)] = \k \crplus \bar\r$ if $\r\cap\n = 0$.

\item[(c)]  $\g [A_\p (F)] = \k$ if $ \r\cap\n$ and $\bar\r\cap\n$ are both nonzero.
\end{theo}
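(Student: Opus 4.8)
The plan is to reduce the statement to a dichotomy about $\r$ and $\bar\r$, dispatch (a) and (b) by an elementary grading argument, and treat (c) by a Bott--Borel--Weil analysis of the $\k$-types of $A_\p(F)$.

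First I would record the easy reductions. By \refth{thA1}(b), $A_\p(F)$ is simple and infinite dimensional; since a simple module on which the simple Lie algebra $[\g,\g]$ acts locally finitely is finite dimensional, $\g[A_\p(F)]\neq\g$. As $A_\p(F)$ is a $(\g,\k)$-module, $\k\subseteq\g[A_\p(F)]$, so $\g[A_\p(F)]$ is one of $\k$, $\k\crplus\r$, $\k\crplus\bar\r$, and it only remains to decide whether $\r\subseteq\g[A_\p(F)]$ and whether $\bar\r\subseteq\g[A_\p(F)]$. I would also note that the three cases are exhaustive: the roots of $\bar\r$ are the negatives of those of $\r$, while $\n$ contains no pair of opposite roots and $\m$ is symmetric, so $\r\cap\n=0$ and $\bar\r\cap\n=0$ together would force $\r\oplus\bar\r\subseteq\m$; but the subalgebra generated by $\k^\perp=\r\oplus\bar\r$ is an $\ad\k$-stable, hence an ideal of $\g$, containing $\k^\perp\subseteq[\g,\g]$, so it contains $[\g,\g]$, and together with $Z_\g\subseteq\t\subseteq\m$ this would give $\m=\g$, contradicting that $\p$ is proper. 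Hence it suffices to prove the three displayed implications.

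Next I would set up the grading. Let $z\in Z_\k$ act as $+1$ on $\r$, as $-1$ on $\bar\r$, and as $0$ on $\k$. The key points are: (i) for the simple $(\g,\k)$-module $A_\p(F)$, one has $\r\subseteq\g[A_\p(F)]$ if and only if the $\ad z$-eigenvalues on $A_\p(F)$ are bounded above (symmetrically for $\bar\r$): $\r$ is spanned by root vectors raising $z$ by one, so on the $\t$-semisimple module $A_\p(F)$ local finiteness of $\r$ equals local nilpotence, and if $\r$ acts locally nilpotently then $A_\p(F)^\r\neq 0$ is a $\k$-submodule whose $(\b\cap\k)$-highest weight vectors are highest weight vectors of $A_\p(F)$ for the Borel $(\b\cap\k)\crplus\r$ of $\g$, forcing $A_\p(F)$ to be a highest weight module (by simplicity) and hence bounded above in $z$; (ii) by the restriction principle (\refprop{prop14}(a)) the $\k$-module $A_\p(F)$ is obtained by applying $R^s\Gamma_{\k,\k\cap\m}$ to $\ind^\g_\p V_\m(\tilde\lambda)$ regarded as a $\k$-module, and since $z$ is central in $\k$ this functor respects the $z$-eigenspace decomposition, so the $z$-values occurring in $A_\p(F)$ are among those occurring in $\ind^\g_\p V_\m(\tilde\lambda)$; (iii) by PBW the latter are the $z$-weights of $S^\cdot(\bar\n)\otimes V_\m(\tilde\lambda)$, and every root of $\bar\n$ has $z$-value in $\{-1,0,1\}$, one of value $+1$ occurring exactly when $\bar\r\cap\n\neq 0$ and one of value $-1$ exactly when $\r\cap\n\neq 0$. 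Parts (a) and (b) then fall out: if $\bar\r\cap\n=0$ then $\bar\n$ has $z$-values $\leq 0$, so by (ii)--(iii) the $z$-eigenvalues of $A_\p(F)$ are bounded above, whence $\r\subseteq\g[A_\p(F)]$, and since $\g[A_\p(F)]\neq\g$ this gives $\g[A_\p(F)]=\k\crplus\r$; the case $\r\cap\n=0$ is symmetric.

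For (c), where $\r\cap\n\neq 0$ and $\bar\r\cap\n\neq 0$, I would show directly that the $z$-eigenvalues of $A_\p(F)$ are unbounded both above and below, so $\g[A_\p(F)]=\k$. Here the containment of $z$-spectra in (ii) goes the wrong way, so I would instead analyse $A_\p(F)$ as a $\k$-module: exactly as in the proof of \refth{th22}(a),(b), $A_\p(F)|_\k$ has a filtration with associated graded $\bigoplus_{t\geq 0}R^s\Gamma_{\k,\k\cap\m}\bigl(\ind^\k_{\k\cap\p}(S^t(\k^\perp\cap\bar\n)\otimes V_\m(\tilde\lambda))\bigr)$, where $\k^\perp\cap\bar\n=(\r\cap\bar\n)\oplus(\bar\r\cap\bar\n)$ has both summands nonzero in this case (by the last sentence of the previous paragraph, applied to $\n$ and to the opposite parabolic). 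Decomposing over $\k\cap\m$ and applying the relative Bott--Borel--Weil theorem (\cite[Prop.~6.3]{EW}, as in \refle{le21}), and using that $W_\k\supseteq W_{\k\cap\m}$ fixes $z$ so that $z(\nu^\vee)=z(\nu)$, I would exhibit for arbitrarily large $t$ a nonzero $\k$-type of $A_\p(F)$ with $z$-value $\geq z(\tilde\lambda)+t$ (coming from $S^t(\r\cap\bar\n)$) and one with $z$-value $\leq z(\tilde\lambda)-t$. The hard part will be the non-vanishing: one must choose the weight $\nu$ along a $z$-extremal direction so that $\nu^\vee$ is dominant for $\k$ (otherwise the Bott--Borel--Weil output is zero), which should be possible for large $t$ since that direction dominates, but checking that the relevant $z$-extremal $\k$-types genuinely survive the relative cohomology functor rather than being annihilated by it is the real obstacle. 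Alternatively, (c) can be deduced from the classification of those Vogan--Zuckerman modules $A_\q(\lambda)$ which are highest weight modules — precisely those with $\bar\n_\q\cap\k^\perp$ contained in $\r$ or in $\bar\r$ — going back to \cite{VZ} and \cite{KV}.
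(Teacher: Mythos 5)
Your treatment of parts (a) and (b) is correct and takes a genuinely different, more elementary route than the paper's. The paper does not argue via the $z$-grading of $S^\cdot(\bar\n)\otimes V_\m(\tilde\lambda)$ at all; instead it first reduces to $F=\CC$ by translation functors (showing $\g[A_\p(F)]=\g[A_\p]$ from the fact that each of $A_\p(F)$, $A_\p$ is a direct summand of the other tensored with $F^{(*)}$), and then proves \refle{leA1}: the implication ``$a:=\dim\bar\r\cap\n=0\Rightarrow\g[A_\p]=\k\crplus\r$'' is obtained from the necessary condition of \cite[Thm.~2.5]{VZ} on the $\k$-types of $A_\p$, which forces $\r$ to annihilate the bottom layer. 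Your grading argument buys the same conclusion for arbitrary $F$ without translation functors and without any $\k$-type information, which is a real simplification; your reduction to the four subalgebras containing $\k$ and the exclusion of $\g$ itself match the paper.

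Part (c), however, has a genuine gap, and it is exactly the one you flag yourself: exhibiting nonzero $\k$-types of $A_\p(F)$ with $z$-value unbounded above and below. Non-vanishing in $R^s\Gamma_{\k,\k\cap\m}(\Gr)$ does not transfer to $R^s\Gamma_{\k,\k\cap\m}$ of the module itself (the associated graded of the latter is only a subquotient of the former), and even the dominance of the relevant $\nu^\vee$ for large $t$ is asserted rather than proved; the vague appeal to a ``classification of highest weight $A_\q(\lambda)$'s'' is not a proof. The paper closes this gap by running the implication in the opposite direction and quoting a sharp result: if $\r\subseteq\g[A_\p]$ then (as in your own step (i)) $A_\p$ is the irreducible quotient of $\ind^\g_{\k\crplus\r}V$ for some $\k$-type $V$ occurring in $H^0(\r,A_\p)$; the trivial infinitesimal character constrains $V$ to be a $\k$-type of $\Lambda^\cdot(\bar\r)$, so $\Hom_\k(\Lambda^\cdot(\bar\r),A_\p)\neq 0$; and Proposition 6.19 of \cite{VZ}, which says $\Hom_\k(\Lambda^{\cdot,\cdot}(\r\oplus\bar\r),A_\p)$ is concentrated in bidegrees $(a+j,b+j)$, then forces $a=0$, i.e.\ $\bar\r\cap\n=0$. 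Contraposing (and symmetrically for $\bar\r$) gives (c). To complete your proof you should import this step -- the central character constraint on $V$ plus \cite[Prop.~6.19]{VZ} -- and note that since this input is stated for $A_\p$ with trivial infinitesimal character, you also need the paper's translation-functor identity $\g[A_\p(F)]=\g[A_\p]$ for part (c), even though (a) and (b) did not require it.
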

The proof of \refth{thA2} is based on a lemma relating $\gg[A_\pp]$ with $\Hom_\kk(\Lambda^{\cdot,\cdot}(\r\oplus\bar{\r}),A_\pp)$, where $\Lambda^{\cdot,\cdot}$ stands for bigraded exterior algebra. Set $a:= \dim \bar\r\cap\n$ and $b:= \dim \r\cap \n$. Then, according to the key Proposition 6.19 of \cite{VZ}, $\Hom_\k (\Lambda^{\cdot ,\cdot} (\r\oplus \bar\r), A_\p)$ is concentrated in bidegrees of the form $(a + j, b+ j)$.
\begin{lemma}\label{leA1}

\item[(a)] $\g [A_\p] = \k \crplus \r \Leftrightarrow a = 0$.

\item[(b)]  $\g [A_\p] = \k \crplus \bar\r \Leftrightarrow b = 0$.

\item[(c)] $\g [A_\p] = \k \Leftrightarrow a\neq 0$ and $b\neq 0$.
\end{lemma}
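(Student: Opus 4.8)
The plan is to locate $\g[A_\p]$ among the only four subalgebras of $\g$ containing $\k$, namely $\k$, $\k\crplus\r$, $\k\crplus\bar\r$ and $\g$. As $A_\p$ is a $(\g,\k)$-module we have $\k\subseteq\g[A_\p]$, and since $\g[A_\p]$ is $\ad\k$-stable while $\r$ and $\bar\r$ are \emph{irreducible} $\k$-modules (a proper $\k$-submodule $\r_1\subsetneq\r$ would produce a fifth subalgebra $\k\crplus\r_1$), the subalgebra $\g[A_\p]$ is determined by whether $\r\subseteq\g[A_\p]$ and whether $\bar\r\subseteq\g[A_\p]$. First I would record two elementary remarks: using the bijection $\gamma\mapsto-\gamma$ on roots one gets $a=\dim(\bar\r\cap\n)=\dim(\r\cap\bar\n)$ and $b=\dim(\r\cap\n)=\dim(\bar\r\cap\bar\n)$, so $\dim((\r\oplus\bar\r)\cap\bar\n)=a+b$; and $a+b>0$, since if $(\r\oplus\bar\r)\cap\n=0$ then $\r\oplus\bar\r\subseteq\m$, whence $\m\supseteq[\g,\g]$ and therefore $\n=0$, contradicting properness of $\p$. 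It follows that $\g[A_\p]\neq\g$, for otherwise $A_\p$ would contain a finite dimensional $\g$-submodule which, having the infinitesimal character of the trivial module, is trivial, forcing $A_\p^\k\neq0$ and hence (Proposition 6.19 of \cite{VZ}) $a=b=0$. Thus the three cases of the lemma are exhaustive and mutually exclusive, and (a), (b), (c) reduce to the two equivalences $\r\subseteq\g[A_\p]\Leftrightarrow a=0$ and $\bar\r\subseteq\g[A_\p]\Leftrightarrow b=0$.

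By the evident symmetry $\r\leftrightarrow\bar\r$, $a\leftrightarrow b$, it suffices to prove $\r\subseteq\g[A_\p]\Leftrightarrow a=0$. Since $A_\p$ is a finitely generated, $\t$-semisimple, $\k$-locally finite $\g$-module, the condition $\r\subseteq\g[A_\p]$ is equivalent to local finiteness of the $\r$-action on $A_\p$; as $A_\p$ is a $\t$-weight module every noncompact positive root vector is automatically locally nilpotent on it, so this amounts to the $\t$-weight support of $A_\p$ being bounded above along the noncompact positive roots, equivalently to $A_\p$ lying in the parabolic category $\mathcal O$ attached to $\k\crplus\r$. For the implication $a=0\Rightarrow\r\subseteq\g[A_\p]$ I would specialize the associated-graded computation from the proof of \refth{th22}(a) (to $\g$ finite dimensional, $E=\CC_{\lambda_\p}$, with $\k$ in the role of $\k_0$, so that a $\k$-module complement of $\k$ in $\g$ is $\r\oplus\bar\r$): the $\k$-module $A_\p$ then carries a filtration with $\Gr A_\p\cong\bigoplus_{t\geq0}R^s\Gamma_{\k,\k\cap\m}(\ind^{\k}_{\k\cap\p}(S^t((\r\oplus\bar\r)\cap\bar\n)\otimes\CC_{\lambda_\p}))$. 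If $a=0$ then $\r\cap\bar\n=0$, so $(\r\oplus\bar\r)\cap\bar\n=\bar\r\cap\bar\n\subseteq\bar\r$; hence every $\t$-weight of $S^\cdot((\r\oplus\bar\r)\cap\bar\n)\otimes\CC_{\lambda_\p}$ pairs with the grading element $z\in Z(\k)\subseteq\t$ (the one acting by $+1$ on $\r$ and $-1$ on $\bar\r$) to at most $\langle\lambda_\p,z\rangle$, and since $\langle\cdot,z\rangle$ is $W_\k$-invariant and vanishes on the roots of $\k$, \refle{le21} shows the same bound holds for every $\t$-weight of $A_\p$. So the $z$-grading of $A_\p$ is bounded above, $\r$ acts locally finitely, and $\r\subseteq\g[A_\p]$.

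The converse $a\neq0\Rightarrow\r\not\subseteq\g[A_\p]$ is the heart of the matter, and I would prove it by contradiction, combining Proposition 6.19 of \cite{VZ} with Kostant's theorem. If $\r\subseteq\g[A_\p]$, then $A_\p$ lies in the parabolic category $\mathcal O$ of $\k\crplus\r$ and has the trivial infinitesimal character, so, being nonzero and finitely generated, it has a composition factor equal to a simple object $L(w\rho-\rho)$ of that category, $w$ a minimal-length representative of a coset of $W_\k$ in $W_\g$. Such an $L(w\rho-\rho)$ is a quotient of the generalized Verma $\g$-module induced from the $\k\crplus\r$-module $V_\k(w\rho-\rho)$ (with $\r$ acting trivially), so its top $\k$-type $V_\k(w\rho-\rho)$ occurs in $L(w\rho-\rho)$, hence in $A_\p$; but by Kostant's theorem $V_\k(w\rho-\rho)$ also occurs in $H^\cdot(\r,\CC)=\Lambda^\cdot(\bar\r)$. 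Therefore $\Hom_\k(\Lambda^{0,q}(\r\oplus\bar\r),A_\p)\neq0$ for some $q$, and Proposition 6.19 forces a bidegree of the form $(0,q)=(a+j,b+j)$, i.e. $a=0$, a contradiction.

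The step I expect to be the main obstacle is exactly this converse, since it is where the genuine input of Proposition 6.19 enters: one must correctly identify the top $\k$-type of a simple object of the parabolic category $\mathcal O$ of $\k\crplus\r$, via generalized Verma modules and Kostant's theorem, as a $\k$-constituent of $\Lambda^\cdot(\bar\r)$, and one must argue with care the passage from ``$\r\subseteq\g[A_\p]$'' to ``$A_\p\in\mathcal O$'', which rests on the fact that on a $\t$-weight module the only obstruction to local finiteness of $\r$ is unboundedness of the $z$-grading. A secondary point requiring care is propagating the grading bound from $\Gr A_\p$ to $A_\p$ itself through the filtration and the functor $R^s\Gamma_{\k,\k\cap\m}$ in the case $a=0$.
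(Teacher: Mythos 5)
Your proof is correct, and its load-bearing direction coincides with the paper's: to show $\g[A_\p]\supseteq\k\crplus\r\Rightarrow a=0$, both arguments realize the simple module $A_\p$ as the irreducible quotient $L(\k\crplus\r,V)$ of a generalized Verma module, use the trivial infinitesimal character (via Kostant's theorem) to force $V$ to be a $\k$-type of $\Lambda^\cdot(\bar\r)$, and then invoke Proposition 6.19 of \cite{VZ} to conclude that a nonzero $\Hom_\k(\Lambda^{0,q}(\r\oplus\bar\r),A_\p)$ entails $a=0$. Where you genuinely diverge is the converse direction $a=0\Rightarrow\r\subseteq\g[A_\p]$: the paper quotes Theorem 2.5 of \cite{VZ}, which constrains the $\k$-types of $A_\p$ and shows directly that $\r$ annihilates the bottom layer, whence $A_\p\cong L(\k\crplus\r,V)$; you instead specialize the associated-graded computation from the proof of \refth{th22} to bound the eigenvalues of the grading element $z\in Z_\k$ on $A_\p$ from above, so that the abelian algebra $\r$ (of positive $z$-degree) acts locally nilpotently. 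Your route is more self-contained (it avoids importing the $\k$-type estimate of \cite{VZ}) at the cost of having to justify the exchange of $\Gr$ with $R^s\Gamma_{\k,\k\cap\m}$ -- for your purposes only the fact that $\Gr(R^s\Gamma(\tilde M))$ is a subquotient of $R^s\Gamma(\Gr\tilde M)$, which follows from the long exact sequences, is needed, since you only compare weight supports. You also supply explicitly the preliminary reductions ($a+b>0$ and $\g[A_\p]\neq\g$) that the paper leaves implicit in deducing part (c); this is a worthwhile addition. One phrase to repair: a noncompact root vector is not ``automatically locally nilpotent'' on an arbitrary weight module -- but this remark is not load-bearing, as your actual argument runs through the $z$-grading.
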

\textbf{Proof of \refle{leA1}.}
\item[(a)] $\g [A_\p] = \k \crplus \r$ if and only if there exists a simple finite dimensional $\k$-module $V$ such that $A_\p$ is isomorphic to the unique irreducible quotient $L(\kk\crplus\rr,V)$ of $\ind^\g_{\k \oplus\r} V$.   But the central character of $A_\p$ is trivial and this constrains $V$ to a finite set: $V$ must be a $\k$-type in $\Lambda^\cdot (\bar\r)$. Hence, $\g [A_\p] = \k \crplus \r$ implies $\Hom_\k (\Lambda^\cdot(\bar\r), A_\p)\neq 0$ which in turn implies $a = 0$.

Conversely, suppose $a = 0$. Let, for some simple finite dimensional $\kk$-module $V$, the $V$-isotypic subspace $A_\pp[V]$ of $A_\pp$ be in the bottom layer of $A_\p$. Theorem 2.5 in \cite{VZ} gives a necessary condition for a simple $\kk$-module $V$ to occur in the restriction of $A_\p$ to $\k$. This condition implies that $\r\cdot A_\p [V] = 0$.  Hence $A_\p\cong L(\k \crplus \r, V)$.

\item[(b)]  Repeat proof of (a) but substitute $\bar\r$ for $\r$.

\item[(c)] Follows from the combination of (a) and (b) and the statement above about $\Hom_\k (\Lambda^{\cdot,\cdot} (\r \oplus\bar\r), A_\p)$.
\qed

\textbf{Proof of \refth{thA2}} First we reduce to the case $F = \CC$, $\lambda = 0$: for any $F$ we have a pair of translation functors $\phi_\lambda$ and $\psi_\lambda$ such that $A_\p (F)\cong \phi_\lambda (A_\p)$ and $A_\p\cong \psi_\lambda (A_\p (F))$ (see \cite[Ch.$\overline{\underline{\mathrm{VII}}}$,Thm.7.237]{KV}). Since $\phi_\lambda (A_\p)$ is a direct summand of $F\otimes A_\p$, we have $\g [A_\p(F)]\supseteq \g [A_\p]$.  Likewise, $\psi_\lambda (A_\p (F))$ is a direct summand of $F^* \otimes A_\p (F)$.  Hence, $\g [A_\p]\supseteq \g [A_\p (F)]$.  Thus, $\g [A_\p (F)] = \g [A_\p]$. \qed

{\bf Example.} Let $\g = sl(n)$ with $n = p + q$, $p > 1$ and $q > 0$, and $\k = s(gl(p)\oplus gl(q))$, the traceless matrices in the subalgebra  $gl(p)\oplus gl(q)$ embedded in the standard fashion in $gl(n)$.  We have $\k = sl(p)\oplus gl(q)$, where $gl(q)$ is embedded as the centralizer of $sl(p)$ in $\g$.  Let $\t\subseteq \k$ be the diagonal matrices; $\t$ is a Cartan subalgebra of $\k$ and of $\g$.  Choose any real nonzero matrix $h\in\t\cap sl(p)$ and let $\p$ be the $\t$-compatible parabolic subalgebra associated to $h\in\t$.  The subalgebra $\k$ is not maximal and we have a triangular decomposition $\g = \r\oplus\k\oplus\bar\r$, where $\r$ and $\bar\r$ are nonzero simple $\k$-submodules of $\g$.  Furthermore, since $h$ has both positive and negative diagonal values, $\p\cap \r\neq 0$ and $\p\cap\bar\r\neq 0$. Therefore, for any simple finite dimensional $\gg$-module $F$, \refth{thA2}(c) implies that $A_{\p}(F)$ is a strict simple $(\g,\k)$-module.

\bibliography{ref,outref,mathsci}

\vskip .20in

\noindent Ivan Penkov

\noindent Jacobs University Bremen \\
\noindent Campus Ring 1\\
\noindent D-28759 Bremen, Germany\\
\noindent email:  i.penkov@jacobs-university.de \\
\vskip .20in

\noindent Gregg Zuckerman\\
\noindent Department of Mathematics\\
\noindent Yale University\\
\noindent 10 Hillhouse Avenue, P.O. Box 208283\\
\noindent New Haven, CT 06520-8283, USA\\
\noindent email:  gregg@math.yale.edu

\end{document}